\documentclass[letterpaper, 11pt,  reqno]{amsart}
\usepackage[margin=1.2in,marginparwidth=1.5cm, marginparsep=0.5cm]{geometry}

\usepackage{amsmath, mathrsfs, amssymb,amscd,amsthm,amsxtra, esint}
\usepackage{mathtools}
\usepackage{tikz} 
\usepackage{color}
\usepackage[implicit=true]{hyperref}
\usepackage{cases}%%%
\mathtoolsset{showonlyrefs}
\usepackage{upgreek}
\usepackage{soul}

\allowdisplaybreaks[2]

\definecolor{gr}{rgb}   {0.,   0.69,   0.23 }
\definecolor{bl}{rgb}   {0.,   0.5,   1. }
\definecolor{mg}{rgb}   {0.85,  0.,    0.85}
\definecolor{yl}{rgb}   {0.8,  0.7,   0.}
\definecolor{or}{rgb}  {0.7,0.2,0.2}

\newcommand{\lvec}[1]{\vec{\mkern4mu #1}}

\usetikzlibrary{shapes.misc}
\usetikzlibrary{shapes.symbols}
\usetikzlibrary{shapes.geometric}

\tikzset{
	ddot/.style={circle,fill=white,draw=black,inner sep=0pt,minimum size=0.8mm},
	>=stealth,
	}

\tikzset{
	ddot2/.style={circle,fill=black,draw=black,inner sep=0pt,minimum size=0.8mm},
	>=stealth,
	}

\newtheorem{theorem}{Theorem} [section]

\newtheorem{lemma}[theorem]{Lemma}
\newtheorem{proposition}[theorem]{Proposition}
\newtheorem{remark}[theorem]{Remark}

\newtheorem{definition}[theorem]{Definition}
\newtheorem{corollary}[theorem]{Corollary}

\DeclareMathOperator{\com}{com}

\DeclareMathOperator{\Id}{Id}
\DeclareMathOperator{\sgn}{sgn}

\DeclareMathOperator{\Ker}{Ker}

\newcommand{\I}{\mathcal{I}}

\newcommand{\noi}{\noindent}
\newcommand{\Z}{\mathbb{Z}}
\newcommand{\R}{\mathbb{R}}

\newcommand{\T}{\mathbb{T}}
\newcommand{\q}{\mathbb{Q}}
\newcommand{\bul}{\bullet}

\let\P= \undefined
\newcommand{\P}{\mathbf{P}}

\renewcommand{\H}{\mathcal{H}}

\newcommand{\CC}{\mathcal{C}}

\renewcommand{\L}{\mathcal{L}}

\newcommand{\F}{\mathcal{F}}

\newcommand{\al}{\alpha}

\newcommand{\dl}{\delta}
\newcommand{\updl}{\updelta}

\newcommand{\Dl}{\Delta}
\newcommand{\eps}{\varepsilon}

\newcommand{\g}{\gamma}
\newcommand{\G}{\Gamma}
\newcommand{\ld}{\lambda}

\newcommand{\s}{\sigma}

\newcommand{\ft}{\widehat}

\newcommand{\dx}{\partial_x}
\newcommand{\dt}{\partial_t}

\newcommand{\ta}{\theta}

\newcommand{\les}{\lesssim}
\newcommand{\ges}{\gtrsim}

\newcommand{\jb}[1]
{\langle #1 \rangle}

\newcommand{\ind}{\mathbf 1}

\newcommand{\pa}{\partial}

\newcommand{\M}{\mathcal{M}}

\newcommand{\N}{\mathbb{N}}

\newcommand{\NN}{\mathcal{N}}
\newcommand{\cL}{\mathcal{L}}
\newcommand{\cC}{\mathcal{C}}

\newcommand{\cX}{\mathcal{X}}
\newcommand{\cY}{\mathcal{Y}}

\newcommand{\W}{\mathcal{W}}

\newcommand{\Lip}{\mathrm{Lip}}
\newcommand{\uw}{U^w}
\newcommand{\uwa}{U^{w_{\alpha}}}
\newcommand{\uu}{\mathbf{u}}
\newcommand{\vv}{\mathbf{v}}
\newcommand{\z}{\zeta}

\newcommand{\Ta}{\Theta}

\newcommand{\sub}{\substack}

\newcommand{\KDV}{\text{\rm KdV} }

\newcommand{\too}{\longrightarrow}

\newtheorem*{ackno}{Acknowledgements}

\numberwithin{equation}{section}
\numberwithin{theorem}{section}

\makeatletter
\@namedef{subjclassname@2020}{\textup{2020} Mathematics Subject Classification}
\makeatother

\begin{document}
\baselineskip = 14pt

\title[Well-posedness theory for a coupled modulated KdV system]{Diophantine conditions in well-posedness theory for a coupled modulated Korteweg-de Vries system}

%\date{\today}
\author[T~. Arthur, D.~Greco, and K.~Tsugawa]
{Thomas Arthur, Damiano Greco, and Kotaro Tsugawa}

\address{Thomas Arthur, School of Mathematics\\
The University of Edinburgh\\
and The Maxwell Institute for the Mathematical Sciences\\
James Clerk Maxwell Building\\
The King's Buildings\\
Peter Guthrie Tait Road\\
Edinburgh\\ 
EH9 3FD\\
United Kingdom}

\email{t.arthur@ed.ac.uk}

\address{Damiano Greco, School of Mathematics\\
The University of Edinburgh\\
and The Maxwell Institute for the Mathematical Sciences\\
James Clerk Maxwell Building\\
The King's Buildings\\
Peter Guthrie Tait Road\\
Edinburgh\\ 
EH9 3FD\\
United Kingdom}

\email{dgreco@ed.ac.uk}

\address{Kotaro Tsugawa, Department of Mathematics, Faculty of Science and Engineering, Chuo University, 
1-13-27 Kasuga, Bunkyo-ku, Tokyo, 112-8551, Japan}

\email{tsugawa@math.chuo-u.ac.jp}

\subjclass[2020]{35Q53, 60H15, 60H50,  60L20}
\keywords{modulated dispersion; Korteweg-de Vries equation; Majda-Biello system Young integral; sewing lemma; global well-posedness; I-method.}

\begin{abstract}
We study the well-posedness theory  of a coupled modulated Korteweg-de Vries (KdV) system on the circle with
a time non-homogeneous modulation acting on the linear dispersion term.
When the coupling parameter is equal to one, it has been recently proved that given any $s\in \mathbb{R}$, the resulting  modulated KdV system is
globally well-posed in $H^s(\mathbb{T})\times H^s(\mathbb{T})$, with a sufficiently irregular modulation. For couplings different from one, we use Diophantine conditions to characterize the resonances and prove that (under further  restrictions on the coupling constant) for any $s\in \mathbb{R}$ the coupled modulated KdV system is globally well-posed  in $H^s(\mathbb{T})\times H^s(\mathbb{T})$. This result differs from its unmodulated counterpart where it is known that global well-posedness holds for $s\ge s_*\in (5/7,1]$. 
\end{abstract}
\maketitle

\tableofcontents

\section{Introduction}
In this paper we consider a coupled modulated  Korteweg-de Vries (KdV)  system of the following form on the circle $\T=\R/(2\pi\Z)$:

\begin{equation}
\label{cmkdv}
\begin{cases}
\dt u+  \dx^3 u \cdot \dt w=\frac{1}{2}\dx v^2,\\
\dt v+  \al\dx^3 v \cdot \dt w=\dx (uv),\\
(u,v)_{|t=0}=(u_0,v_0),
\end{cases}
\end{equation}
 where 
 %$\T=\R/(2\pi\Z)$\footnote{By convention, $\T$ is endowed with the normalized Lebesgue measure $dx_{\T}=(2\pi)^{-1}dx$ such that we do not
%need to consider factors involving $2\pi$.}, 
$\alpha\in \textcolor{black}{(0,4]}$,
$u,v \colon \R_+\times \T \to \R$  are the unknown, and 
 $w \colon \R_+\to\R$ is an arbitrary continuous function of time, 
called a {\it modulation}.
In the unmodulated case (i.e. $w(t)= t$), the system \eqref{cmkdv} was introduced by Majda and Biello \cite{MB} as a reduced asymptotic model for nonlinear resonant interactions between long-wavelength equatorial Rossby waves and barotropic Rossby waves with significant mid-latitude projection, in the presence of suitably sheared zonal mean flows. In the future we may refer to \eqref{cmkdv} as to the modulated Majda–Biello system.
In the literature, existence results for the system \eqref{cmkdv} are limited to particular regimes. The case 
$\alpha=1$ and generic continuous modulation $w$  has been analyzed in \cite{CGLLO, GLO},  whereas the attention on general values of 
$\alpha$ has so far been restricted to the case of  homogeneous modulation; see \cite{LWP_diophantine, GWP_dioph}. The main novelty of this paper is to extend these results by developing a unified framework that applies to general $\alpha$ and general $w$.

\subsection{Background: well-posedness results for \texorpdfstring{$\alpha=1$}{alpha equal 1}}
\noi To begin with, let us recall what is known in the regime $\alpha=1$ and $w$ a generic  continuous modulation.
%When  studying of the above system, two \textit{resonance equations} will play a crucial role:
%Throughout the paper, we fix $\alpha \in (0,4)$.
%Hereafter, for any constants that depend on $\alpha$, we will not explicitly indicate this dependence.
%Recall the definition
%When $n=n_1+n_2$, it follows that
%\begin{align}
%&\Xi^{(1)}(n, n_1, n_2)=n(3\alpha n_1^2-3\alpha nn_1+(\alpha-1)n^2)=3\alpha n(n_1-cn)(n_1-c_*n),\label{eq_factori1}\\
%&\Xi^{(2)}(n, n_1, n_2)=\Xi^{(1)}(-n_1, -n, n_2)=-3\alpha n_1(n-cn_1)(n-c_* n_1),\label{eq_factori2}
%\end{align}
In this direction, the second author in collaboration with Liu and Oh \cite{GLO} continued the analysis started in \cite{CGLLO} of the following modulated Korteweg-de Vries equation on the circle:  
\begin{align} 
\label{maineq}
\begin{cases}
    \dt u + \dx^3 u \cdot \dt w = \dx u^2, \\
    u|_{t=0} = u_0.
\end{cases} 
\end{align}
Such an equation arises naturally as a model for weakly nonlinear long waves in an
inhomogeneous waveguide; see \cite{CMG,HZ}. Originally, de Bouard and Debussche and, Debussche and Tsutsumi \cite{BD, DT} started to study nonlinear dispersive equations with Brownian modulation; see also \cite{Dub}. In their cases, the equations could be understood as stochastic partial differential equations (SPDEs) via Ito’s calculus.
However, in the case of a generic modulation, such as \eqref{maineq} and \eqref{cmkdv},  the interpretation as an Ito or Stratonovich SPDE is not possible. To overcome such an issue, Chouk and Gubinelli \cite{CG1} first, followed by subsequent works in \cite{CGLLO, GLLO}, developed a different approach based on nonlinear Young integration; see \cite{FV, G,Y}. The starting point is the following notion  of {\it irregularity} introduced in \cite{R.CG16}.
 \begin{definition}
\label{DEF:ir}
\rm

Let $\rho>0$ and  $0 < \g < 1$.
Given $T > 0$, we say that a function $w\in C([0,T];\R)$ is $(\rho,\g)$-irregular 
on the time interval $[0, T]$  if we have
\begin{align}
\|\Phi^w\|_{  \W^{\rho,\g}_T} 
:= \sup_{a\in \R} \sup_{0\leq r < t\leq T} \langle a \rangle^\rho \frac{|\Phi^w_{t,r}(a)|}{|t-r|^\g} 
< \infty, 
\label{rho1}
\end{align}

\noi
where 
\begin{align}
\Phi^w_{t,r}(a)
=\int_r^t e^{i  a w(t') } d t'.
\label{rho2}
\end{align}

\noi
We say that $w$ is 
$(\rho,\g)$-irregular 
on $\R_+$  if it is 
$(\rho,\g)$-irregular 
on $[0, T]$ for each finite $T > 0$.
%Given an interval $I = [a, b] \subset \R_+$, 
%we say that $w$ is 
%$(\rho,\g)$-irregular 
%on the interval $I$ if $w(\cdot + a)$ is 
%$(\rho,\g)$-irregular 
%on $[0, b-a]$.
\end{definition}
Next, we briefly recall the approach used in \cite{CG1,CGLLO} to study \eqref{maineq}. The same approach will be indeed used for \eqref{cmkdv}. The first observation is that, since $w$ is only assumed to be continuous, the modulated KdV equation \eqref{maineq} is merely formal.
On the other hand, the corresponding
%As a result, the modulated dispersive equation~\eqref{maineq} (as well as  the modulated KdV system \eqref{cmkdv}) is then understood via the following 
 Duhamel formulation (= mild formulation) involves $w$ only and not its time derivative:
\begin{align}
\label{mild32} 
    {u} (t) = U^{w}(t)u_0 + U^{w}(t)\int_{0}^{t} U^{w} (t')^{-1} \pa_{x} \big( {u} (t'))^2 \big) d t',
\end{align} 
where $U^{w} (t) = e^{- w (t) \pa_x^3}$ denotes the modulated linear propagator such that $U^{w} (0) = \rm{Id}$\footnote{Note that this is not an additional restriction since throughout the paper we will assume $w(0) = 0$. This condition does not affect our analysis since only the time derivative $\dt w$
 appears in \eqref{cmkdv}-\eqref{maineq}.}.  If we set $\uu$ to be the modulated interaction representation of the unknown $u$ defined as
 \begin{align}\label{represeintro}
 \mathbf{u} (t) = U^{w} (t)^{-1} u(t),
 \end{align}
 the Duhamel formulation \eqref{mild32} becomes
\begin{align}
\label{mild3} 
    \mathbf{u} (t) = u_0 + \int_{0}^{t} U^{w} (t')^{-1} \pa_{x} \big( ( U^{w} (t') \mathbf{u} (t'))^2 \big) d t'.
\end{align} 
As already underlined in \cite{CGLLO}, the key reason for passing from \eqref{mild32} to \eqref{mild3}, is the absence of $U^w(t)$ in front of the integral, which allows us to exploit temporal regularity of the interaction representation $\uu$. The main idea of \cite{CG1,CGLLO} is to apply the sewing lemma (see Lemma \ref{LEM:sew}) in order to give meaning to the integral in the right hand side of \eqref{mild3} 
%\noi  By applying the sewing lemma (see Lemma \ref{LEM:sew})
%the integral in the right hand-side of \eqref{mild3} 
%is understood
as a nonlinear Young integral denoted by $\I^{X}(\uu)$ for a suitable bilinear (two parameter) driver $X(=X_{t,r})$  (see \cite[eq. (3.3)]{CGLLO}),
%$\mathcal{I}^{X}(\uu)$
% for a suitable driver $X$; see \cite[eq. (2.29)]{GLO}. 
and to recast  \eqref{mild3} as a nonlinear Young differential equation (YDE):
\begin{align}\label{YDEintro}
\uu(t)=u_0+\I^{X}(\uu)(t).
\end{align}
See Subsection \ref{SUBSEC:Y2} for a brief review on the the construction of nonlinear Young integrals.
%Similar considerations apply to the KdV system in \eqref{cmkdv}.
 In particular, since the YDE \eqref{YDEintro} is solved by a contraction argument, thus 
exhibiting a semilinear nature of the equation, we refer to local well-posedness
as {\it semilinear local well-posedness}.

%In particular, we will prove existence (and uniqueness)  in a suitable subspace of $C([0,T]; H^s(\T))$, cf., \cite[Definition 1.2]{CGLLO}.  Such a uniqueness statement is then called \textit{conditional}. We refer the reader to \cite{GLLO} for
%different solution theory based on adapting the normal form method to the modulated setting. 
% An analogous  comment
%applies to the other modulated equations considered in this paper such as \eqref{scaleKdV}. %In view of Lemma \ref{LEM:OBS1}, Lemma \ref{LEM:sew} and Proposition \ref{PROP:young1},   

\remark \label{examples}\rm
Non trivial examples of $(\rho,\g)$-irregular functions can be found for example in~\cite{R.CG16,GG}. In particular, given  any $\delta \in (0,1)$, a generic $\dl$-H\"older continuous function $w \in C^\delta([0,1];\R^d)$ is $(\rho,\g)$-irregular for any $\rho < \frac1{2\delta}$ with some $\gamma = \gamma(\rho) \in (\frac 12,1)$. Here, ``generic'' is to be understood according to the notion of {\it prevalence}; see~\cite[p.\,2418 and Theorem 3.1]{GG} for details and the references therein. 
 %is $(\rho,\g)$-irregular for any $\rho < \frac1{2\delta}$ with some $\gamma = \gamma(\rho) \in (\frac 12,1)$
%In particular, a fractional Brownian motion $\{W_t\}_{t\in \R_+}$ 
%of Hurst index $H\in(0,1)$, is $(\rho,\gamma)$-irregular 
%if $\rho < \frac{1}{2H}$,  
%there exists $\frac 12 < \g < 1$
%such that,  with probability one,  the sample paths of 
%$W$ are $(\rho,\g)$-irregular on $\R_+$.

%\begin{oldtheorem}
%\label{THM:B}
%Let  $d \ge 1$.
%Given  any $\delta \in (0,1)$, a generic $\dl$-H\"older continuous function $w \in C^\delta([0,1];\R^d)$ 
 %is $(\rho,\g)$-irregular for any $\rho < \frac1{2\delta}$ with some $\gamma = \gamma(\rho) \in (\frac 12,1)$.
%\end{oldtheorem}

%Notice that Theorem \ref{THM:A}
%shows that fractional Brownian motions can be $(\rho, \g)$-irregular for arbitrarily large $\rho$, 
%while Theorem \ref{THM:B} actually shows that $(\rho, \g)$-irregularity is a generic property of H\"older functions of sufficiently low regularity,  
%where ``generic'' is to be understood according to the notion of {\it prevalence}; see~\cite[p.\,2418 and Theorem 3.1]{GG} for details and the references therein. 

 We can summarize the well-posedness results for \eqref{maineq} as follows: 
 \begin{theorem}[\textnormal{\cite[Theorem 1.3-(i)]{CGLLO}, \cite[Theorem 1.4]{GLO}}]
 \label{THM:1}
Given $\rho \ge\frac 12$,  $\frac12< \g < 1$, and $T> 0$, 
let  $w$ be $(\rho,\g)$-irregular on $[0, T]$ in the sense of Definition~\ref{DEF:ir}.

\smallskip

\noi
\textup{(i)}
Suppose that $(u_0,v_0)$ have mean zero,  $\rho \ge \frac 12$ and $s\in \R$
satisfy one of the following conditions\textup{:}
\begin{align}
\begin{split}
\textup{(i.a)} &\ \  \tfrac 12 \le \rho \le \tfrac 34 \quad \text{and} \quad 
s > \tfrac 32 - 3 \rho, \\
\textup{(i.b)} &\ \  \rho > \tfrac 34\quad  \text{and} \quad  s \ge - \rho.
\end{split}
\label{reg1}
\end{align}

\noi
Then, the modulated KdV equation \eqref{maineq}
on  $\T$
is semilinearly locally well-posed in $H^s(\T)$.

\smallskip

\noi
\textup{(ii)}
In addition, suppose that  $s\in \R$ satisfy one of the following conditions\textup{:}
\begin{align}
\begin{split}
\textup{(ii.a)} & \  \tfrac{1}{2} < \rho \le \tfrac{3\g}{6\g-2} \quad \text{and} \quad s > \big(\tfrac{3}{2}-3\rho \big) \g, \\ 
\textup{(ii.b)} & \  \tfrac{3\g}{6\g-2}<\rho \leq \tfrac {3}{2}\quad \text{and} \quad s \ge -\rho,  \\ 
\textup{(ii.c)} & \ \tfrac{1}{2}<\g<\tfrac{\sqrt{5}-1}{2},\\
&\ \textup{(ii.c.1)} \  \tfrac{3}{2}\le\rho<\tfrac{3(1-\g)^2}{2(-\g^2-\g+1)}, \quad \text{and} \quad s \ge -\rho,  \\  
& \text{or} \\ 
&\ \textup{(ii.c.2)} \ \rho \geq \tfrac{3(1-\g)^2}{2(-\g^2-\g+1)}, \quad \text{and} \quad s > -\tfrac {4\rho(2\g-1)+3(1-\g)^2}{2(-\g^2+3\g-1)}, \\
\textup{(ii.d)} & \ \ \tfrac {\sqrt{5} - 1}{2}\le  \gamma < 1,\quad \rho\ge \tfrac{3}{2},\quad \text{and}\quad s\ge -\rho. 
\end{split} 
\label{mcon1}
\end{align}

\noi
Then, 
 the modulated KdV equation \eqref{maineq}
on $\T$
is globally  well-posed in $H^s(\T)$.
\end{theorem}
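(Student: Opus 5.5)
This is a cited result, so I would reconstruct its proof in two stages: nonlinear Young local theory, then a modulated I-method for globalization.

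\emph{Local theory (part (i)).} I would work in Fourier variables with the interaction representation \eqref{mild3}. For mean-zero data, the nonlinearity reads
\[
\widehat{\partial_x(\cdot)^2}(n)=in\sum_{n=n_1+n_2,\ n_1n_2\ne0} e^{-i w(t)\Xi(n,n_1,n_2)}\,\ft{\uu}(n_1)\ft{\uu}(n_2),
\]
with resonance $\Xi=3nn_1n_2$. The natural driver is
\[
X_{t,r}(f,g)(n)=in\sum_{n=n_1+n_2} \Phi^w_{t,r}(3nn_1n_2)\,\ft f(n_1)\ft g(n_2).
\]
By Definition~\ref{DEF:ir}, $|\Phi^w_{t,r}(a)|\le \|\Phi^w\|_{\W^{\rho,\g}_T}\jb{a}^{-\rho}|t-r|^\g$. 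Hence $X$ is $\g$-H\"older in time, with values in bounded bilinear maps $H^s\times H^s\to H^s$, provided the weighted convolution bound
\[
\sup_n \sum_{n_1+n_2=n}\frac{|n|^2\jb{n}^{2s}}{\jb{nn_1n_2}^{2\rho}\jb{n_1}^{2s}\jb{n_2}^{2s}}<\infty
\]
holds (via Cauchy--Schwarz). I would check this by splitting into cases $|n_1|\sim|n_2|\gg|n|$ and $|n|\sim\max(|n_1|,|n_2|)$. The conditions \eqref{reg1} should come out of this, with the threshold $\rho=3/4$ appearing in the high--high$\to$low interaction. Since $\g>1/2$, the sewing lemma (Lemma~\ref{LEM:sew}) then defines $\I^X(\uu)$ on $C^{\g'}_T H^s$ for suitable $\g'$ with $\g+\g'>1$. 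A contraction in this space on a short interval gives semilinear local well-posedness. The local time depends only on $\|u_0\|_{H^s}$ and $\|\Phi^w\|_{\W^{\rho,\g}_T}$, and the existence time should scale like $\|u_0\|_{H^s}^{-1/\g}$.

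\emph{Global theory (part (ii)).} For $s\ge0$, $L^2$ conservation is available: the $L^2$ norm is conserved in the interaction picture, since the Young-integral solution is a limit of smooth solutions, for which $\int u\,\partial_x u^2=0$. Iterating the local result then gives global well-posedness whenever the local threshold allows $s\ge0$. For negative $s$, I would run an I-method. Take $I=I_N$ a Fourier multiplier equal to $1$ on $|n|\le N$ and to $(|n|/N)^s$ otherwise, so that $Iu\in L^2$. Estimate the increment of the modified energy $\|I\uu(t)\|_{L^2}^2$ over a local interval as a trilinear Young integral with driver built from $\Phi^w(3nn_1n_2)$ and symbol
\[
m(n_1)^2n_1+\dots,
\]
which vanishes when all frequencies are at most $N$. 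This should yield an increment $\lesssim N^{-\beta}|t-r|^\g\|Iu\|_{L^2}^3$ for some $\beta=\beta(\rho,s)>0$. Summing over $\sim T\,\|Iu_0\|^{1/\g}$ local intervals, with $\|Iu_0\|_{L^2}\lesssim N^{-s}\|u_0\|_{H^s}$, and choosing $N$ large closes the argument provided $\beta>(\text{power lost}) $. This exponent bookkeeping produces the conditions \eqref{mcon1}, including the $\g$-dependence and the golden-ratio threshold $\g=(\sqrt5-1)/2$. Where a direct trilinear bound is insufficient (large $\rho$, very negative $s$), I would add a higher-order correction term, in the spirit of a second-generation modified energy, obtained by a sewing-based normal form.

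\emph{Main obstacle.} The hardest step is the trilinear increment estimate for the modified energy. It requires a commutator-type cancellation of the multiplier symbol, uniformly in $N$, together with the H\"older-in-time control from the sewing lemma. Optimizing $\beta$ against the time-step count is what generates the intricate regions (ii.a)--(ii.d). I would first carry out the high--high$\to$low case, which I expect to be the worst.
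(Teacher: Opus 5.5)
\textbf{Local part.} Your criterion $\sup_n\sum_{n_1+n_2=n}\frac{|n|^2\langle n\rangle^{2s}}{\langle nn_1n_2\rangle^{2\rho}\langle n_1\rangle^{2s}\langle n_2\rangle^{2s}}<\infty$ puts the supremum on the output frequency in the Cauchy--Schwarz step. That does not yield \eqref{reg1}.

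In the high$\times$high$\to$low region $|n_1|\sim|n_2|\gg|n|$ the summand is $\sim|n|^{2+2s-2\rho}|n_1|^{-4s-4\rho}$. The sum over $n_1$ converges only if $s>\frac14-\rho$. For $\rho>\frac58$ this is strictly more restrictive than the statement. You lose $\frac32-3\rho<s\le\frac14-\rho$ in (i.a) and all of $-\rho\le s\le\frac14-\rho$ in (i.b). For example, at $\rho=1$, $s=-1$ the summand is $|n|^{-2}$ for every $n_1$, so the sum diverges.

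In this region you must put the supremum on an \emph{input} frequency, as in Lemma~\ref{lem_bi} under \eqref{eq_condition_2}; this is how the region $B_3$ is treated via \eqref{A219}. The quantity to bound becomes $\sup_{n_1}|n_1|^{-4s-4\rho}\sum_{|n|\ll|n_1|}|n|^{2+2s-2\rho}$. It is finite precisely when $s\ge\max(-\rho,\frac32-3\rho)$, up to the endpoint $\rho=\frac34$. This is where the threshold $\rho=\frac34$ actually comes from.

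\textbf{Global part.} The claim that the existence time is $\sim\|u_0\|_{H^s}^{-1/\gamma}$ does not follow from the contraction you describe. A fixed point in $\mathcal{C}^{\gamma'}_\tau H^s$ with $\gamma+\gamma'>1$ gives $\tau\gtrsim(\|X\|(1+\|u_0\|_{H^s}))^{-\theta}$ with $\theta=\frac{1}{\gamma-\gamma'}>\frac{1}{2\gamma-1}$ (Remark~\ref{theta}).

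Plug this $\theta$ into your unscaled iteration: energy $\|Iu_0\|_{L^2}^2\sim N^{-2s}$, $T/\tau$ steps, and increment $N^{-\beta}\tau^{\gamma}\|Iu\|_{L^2}^3$ per step. The argument closes only if $\beta>|s|\bigl(1+(1-\gamma)\theta\bigr)$, i.e.\ at best $s>-\frac{2\gamma-1}{\gamma}\beta$. When $\rho<\frac32$ the commutator gain is $\beta=3\rho-\frac32$, and (ii.a) reads $s>-\gamma\beta$. Since $\frac{2\gamma-1}{\gamma}<\gamma$, your bound is strictly weaker than (ii.a).

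The cited proofs get past this with the rescaling \eqref{scaling1}, \eqref{scalw} with a free parameter $b$. It makes $\|Iu_0^\lambda\|_{L^2(\mathbb{T}_\lambda)}$ small at the cost of the factor $\lambda^{b(1-\gamma)}$ from Lemma~\ref{lem_Phi}. One then optimizes over $b$ and $\theta\downarrow\frac{1}{2\gamma-1}$, as in the analogous Lemma~\ref{lem_region2} here. The factor $-\gamma^2+3\gamma-1=(2\gamma-1)\bigl(2-\gamma+(1-\gamma)^2\theta\bigr)\big|_{\theta=1/(2\gamma-1)}$ in (ii.c.2), and with it the golden-ratio threshold, comes out of this optimization, not from a second modified energy.

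If you want to keep the exponent $1/\gamma$, you must justify it by running the contraction in the time-scale-invariant norm $\|\cdot\|_{L^\infty_\tau H^s}+\tau^{\gamma'}\|\cdot\|_{C^{\gamma'}_\tau H^s}$. Your count then becomes $\beta>|s|/\gamma$, which contains no $\theta$. It therefore cannot reproduce the golden-ratio threshold you claim. You would have to check separately, with explicit commutator gains $\beta(\rho)$ in each regime, that $s>-\gamma\beta$ together with the local condition covers \eqref{mcon1}.
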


\remark \rm \label{sol} To be precise, in Theorem \ref{THM:1},
%for all the well-posedness results for modulated dispersive equations/systems, 
we say that $u\in C([0,\tau];H^s(\T))$ solves \eqref{maineq}
%(respectively $(u,v)\in   C([0,\tau];H^s(\T)\times H^s(\T))$) is a solution
  on $[0,\tau]\times \T$ if its  associated modulated interaction representation $\uu$ defined in \eqref{represeintro} solves the associated nonlinear Young differential equation 
  \eqref{YDEintro} on $[0,\tau]\times \T$, where  $\tau>0$ denotes the existence time; see also \cite[Definition 1.2, Remark 1.4]{CGLLO}. A similar comment applies to the statement of Theorem \ref{THM:11} below. 
\medskip

Note that,
%, Theorem \ref{THM:1} exhibits a strong regularization-by-noise phenomenon which differs from its unmodulated counterpart. Indeed, 
unlike  the (unmodulated) KdV equation on $\T$
\begin{align*}
 \dt u + \dx^3 u = \dx u^2,
\end{align*}
which is known to be ill-posed in $H^{s}(\T)$ for $s<-1$, see  \cite{KGL,CKSTT04,M, KP, KV19}, given any $s\in \R$, the modulated KdV \eqref{maineq} is globally well-posed  in $H^{s}(\T)$
provided that $w$ is sufficiently irregular (i.e., sufficiently large $\rho=\rho(s)$), Hence,  
in connection with many studies such as \cite{FHLN,GG0,FGP,GP,RT,T}, Theorem \ref{THM:1} established a regularization-by-noise phenomenon
for the modulated KdV \eqref{maineq}.
%That is, they proved that given any $s\in \R$, the modulated
%KdV \eqref{maineq} with a sufficiently irregular modulation $w$ (i.e., sufficiently large $\rho$) is globally well-posed in $H^s(\T)$. 
%In particular, their results, as well as our main result in Theorem \ref{THM:11}, apply to the case where the modulation is given by a fractional Brownian motion or a generic $\dl$-H\"{o}lder continuous function.
We also stress that, prior to the work \cite{CGLLO}, regularization by noise in the context of dispersive PDEs was known only for probabilistic well- posedness with random initial data and / or additive noises of super-critical regularity (such as \cite{B096, BT, CO, BOP, P, OP, GKO24, DNY, DNY2, OKT, B, OKT2, BDNY}).

Since the abstract setting for studying \eqref{cmkdv}  is similar to that of \eqref{maineq}, for the convenience of the reader, we briefly go over the main steps for the proof of Theorem \ref{THM:1}. Regarding the local result (Theorem \ref{THM:1}-(i)),  one of the key ingredients  is to obtain  suitable bilinear estimates for the driver $X$ whose Fourier transform satisfies
\begin{align}\label{XkdVintro}
\F ( X_{t,r}(f_1,f_2)) (n)= in\sum_{ \substack{n_1, n_2 \in \Z\\n = n_1+n_2}}\Phi^{w}_{t,r}  (\Xi_{\textup{KdV}}(\bar n))  \ft f_1(n_1)  \ft f_2 (n_2),
\end{align}
where $f_1,f_2$ are smooth functions on $\T$, and $\Xi_\KDV$ is 
 given by 
\begin{align}
\Xi_\KDV (\bar{n}) 
:= - n^3+ n_1^3 +n_2^3=-3n n_1n_2,\quad n=n_1+n_2.
\label{K3}
\end{align} 
See \cite[eq. (4.4), Proposition 4.1]{CGLLO} for the precise statements. Hence, in view of \eqref{XkdVintro}, studying the regularity of the driver $X$ essentially reduces to studying the decay of the Fourier coefficient $\Phi^{w}_{t,r}  (\Xi_{\textup{KdV}}(\bar n))$. 
%In order to so, as in the unmodulated case $w(t)\equiv t$,  a central role is played by the { resonance function} 
While in the unmodulated framework the resonance function in \eqref{K3} allows one to gain essentially $3/2$ derivatives \footnote{Under mean zero assumptions.}; see e.g., \cite{KGL}, if $w$ is $(\rho,\g)$-irregular on $[0,T]$ in the sense of Definition \ref{DEF:ir}, the bound
\begin{align}\label{eq115intro}
|\Phi^{w}_{t,r}  (\Xi_{\textup{KdV}}(\bar n))\les |t-r|^{\gamma}\jb{\Xi_{\textup{KdV}}(\bar n)}^{-\rho},\quad 0\le r< t\le T,
\end{align}
yields smoothing of arbitrarily high order\footnote{The smoothing of arbitrary order is understood in the sense that we gain an arbitrary amount of derivatives by taking $\rho$ sufficiently large.
%In particular, all the lower bounds on $s$ provided in Theorem \ref{THM:1} tends to $-\infty$ as $\rho$ diverges.
}. This is why, in contrast with the classic theory of the KdV equation initiated by Bourgain ~\cite{BO93} and followed by \cite{KGL, Kish}, where global well-posedness is known to be true for $s\ge -\frac{1}{2}$ (actually for $s\ge -1$ by further employing complete integrability \cite{KV19, KP}), Theorem \ref{THM:1} exhibits well-posedness even in the regimes where the standard KdV is ill-posed. 
 
Next, we move our attention to the global well-posedness result in Theorem \ref{THM:1}-(ii). For $s\ge 0$ the conclusion follows by combining the conservation of the $L^2$-norm with persistency of regularity \footnote{Note that the proof of the $L^2$-conservation must be adapted to the modulated setting as well, see Proposition \ref{PROP:L2}.}; see \cite[Proposition 6.1]{CGLLO}.  For $s<0$, the strategy is to adapt the so-called $I$-method (= the method of almost conservation laws), introduced by
Colliander, Keel, Staffilani, Takaoka, and Tao \cite{CKSTT03} to the modulated setting. Initially, in \cite{CGLLO} the authors applied the usual KdV scaling to the unknown $u$. This led to restricting to the subcritical regime $s>-\frac{3}{2}$, independently of the size of $\rho$. Recently,      in \cite{GLO}, by employing a different scaling, the authors removed the lower barrier $-\frac{3}{2}$ of \cite[Theorem 1.3-(ii)]{CGLLO},  leading to conditions for local and global well-posedness that share the same structural form\footnote{Here, by sharing the same structural form we understand that, in analogy with \eqref{reg1}, the lower bounds for $s$ in \eqref{mcon1} go to $-\infty$ as $\rho$ diverges.}; see Subsection \ref{Subsec4} for the details  of the scaling argument.  %The same strategy will be applied to our modulated KdV system \eqref{cmkdv}.

Since the same outcome of Theorem \ref{THM:1} applies to  the coupled modulated system \eqref{cmkdv} with $\alpha=1$, our main purpose is study \eqref{cmkdv} with $\alpha\neq 1$. To be precise, we will focus on the range  $\alpha\in (0,4)\setminus\{1\}$; see Remarks \ref{introrestrict}-\ref{large_a_intro}.
 
\subsection{Main results: well-posedness for \texorpdfstring{$\alpha\in(0,4)\setminus\{1\}$}{alpha in (0,4) except 1}}
In this subsection, we focus on well-posedness theory for \eqref{cmkdv} when $\alpha\neq 1$ by extending the results of \cite{GWP_dioph} to the modulated setting.  In particular, our main goal is to show a regularization-by-noise phenomenon as in Theorem \ref{THM:1}. Although the role of a different coefficient might appear a trivial modification, it in fact produces some structural differences we will highlight in the sequel. 
%First of all, $\alpha\neq 1$, 
%$the lack of symmetry between the  dispersion terms %of \eqref{cmkdv}, 
%produces  a family of linear semigroups:

In analogy with \eqref{maineq} and \eqref{mild3},  we begin by recasting \eqref{cmkdv} via the mild formulation:
 \begin{equation}
\label{mild2bis}
\left\{
\begin{aligned}
\uu(t) &= u_0 + \frac{1}{2}\int_0^t \uw(t')^{-1}\big(\partial_x(\uwa(t') \vv(t'))^2\big)\,dt', \\
\vv(t) &= v_0 + \int_0^t \uwa(t')^{-1}\big(\partial_x\big((\uw(t') \uu(t'))(\uwa(t') \vv(t'))\big)\big)\,dt',
\end{aligned}
\right.
\end{equation}
where  \begin{align}
(\uu, \vv)(t)=(\uw(t)^{-1}u(t), \uwa(t)^{-1}v(t)),% \quad U^{w_\al}(t) = e^{- \al w(t)\partial_x^3 },  \quad U^{w_\al}(0) = \Id.
\label{int1} 
\end{align}
and \begin{align}\label{semigroup_a}
U^{w_\alpha}(t):=e^{-\alpha w(t)\dx^3}. 
\end{align}
For simplicity we set $w_\alpha|_{\alpha=1}=w.$
%To better explain the role of the coupling parameter $\alpha$, we start by focusing on the unmodulated version of \eqref{mild2bis} (i.e., with $w(t)=t$) that was studied in \cite{LWP_diophantine, GWP_dioph}. 
Then, as we have already explained above, the first step to give a meaning to the integrals in the right hand side of \eqref{mild2bis} as nonlinear Young integrals, is to suitably introduce two drivers $X^j, j=1,2$, see \eqref{driver1}, and prove appropriate bilinear estimates (Lemma \ref{lem_nonlinsmooth}).  In order to do so, similarly to \eqref{eq115intro}, the main tool is the bound 
\begin{align}\label{eq116intro}
|\Phi^{w}_{t,r}  (\Xi^{(j)}(\bar n))|\les |t-r|^{\gamma}\jb {\Xi^{(j)}(\bar n)}^{-\rho}
\end{align}
where $\Xi^{(j)},\, j=1,2$,  are defined by  
\begin{equation}\label{res2intro}
(\Xi^{(1)}(\bar n), \Xi^{(2)}(\bar n)):=(\alpha n_1^3+\alpha n_2^3-n^3,n_1^3+\alpha n_2^3-\alpha n^3),  \quad {n} = n_1+n_2.
\end{equation}
% In particular, when considering the case $\alpha=1$, as in the KdV case,  the  algebraic identity
% \begin{align}
% \Xi_\KDV (\bar n)=3nn_1n_2,\quad  n=n_1+n_2, 
% \end{align}   
%  essentially allows us to gain $3/2$ derivatives (for $n,n_1,n_2\neq 0$). In the classic local well-posedness theory for the KdV equation this aspect was crucial in order to obtain the bilinear estimate \footnote{By assuming $u,v$ are mean zero.}:
%  \begin{align}
%  \|\dx (uv)\|_{Z^{s}(\T\times \R)}\les \|u\|_{X^{s,\frac{1}{2}}}\|v\|_{X^{s,\frac{1}{2}}},\quad s\ge -\frac{1}{2}. 
%\end{align} 
\noi Let us focus first on $\Xi^{(1)}$. Under the condition $n=n_1+n_2$, it follows that
\begin{align}\label{eq_factori1}
&\Xi^{(1)}(\bar n)=n(3\alpha n_1^2-3\alpha nn_1+(\alpha-1)n^2)=3\alpha n(n_1-cn)(n_1-c_*n),
\end{align}
where
\begin{equation}
\label{roots1}
    c := \frac{1}{2} - \frac{\sqrt{-3+12\al^{-1}}}{6}, \qquad c_* := \frac{1}{2} + \frac{\sqrt{-3+12\al^{-1}}}{6}.
\end{equation}
Note that $c,c_*\in \R$ (and $c+c_*=1$) if and only if $\alpha\in (0,4]$. While in the case of \eqref{maineq} mean zero assumptions are enough to ensure that the resonance function $\Xi_{\textup{KdV}}$ in \eqref{K3} is far from zero (and so \eqref{eq115intro} provides smoothing possibly by taking $\rho$ large), the resonance function $\Xi^{(1)}$ might vanish infinitely many times,  not allowing a gain of smoothness from \eqref{eq116intro} (independently of the size of $\rho)$. %Due to the derivative nonlinearity, we only consider $n \neq 0$.
 %In particular, there exist $c_1,c_2\in \mathbb{R}$ (with $c_1+c_2=1$) such that the triple $(n_1,n_2,n)=(c_1n,c_2 n,n)$ is a root for $\Phi_\alpha$, causing \textit{resonance}. 
In particular, as already explained in \cite{LWP_diophantine, GWP_dioph}, two cases can occur. 
 
 \noi\textup(i) If $c\in \mathbb{Q}$ (and so also $c_*\in \mathbb{Q})$\footnote{Note that this happens if and only if $\alpha=\frac{12}{k^2+3}$ for some $k\in \mathbb{Q}$.}, $\Xi^{(1)}$ vanishes infinitely many times, i.e., there are infinitely many $n\in \mathbb{Z}\setminus\{0\}$ causing resonance. As a result, we do not expect any gain of smoothness in this regime.
 
\noi \textup(ii)  If $c\notin \mathbb{Q}$ (and so also $c_*\notin \mathbb{Q})$ and $n\in \Z\setminus\{0\}$, then  $\Xi^{(1)}$ is never zero. Nevertheless, it can be arbitrarily close to zero, according to ``how close $c$ is to a rational number". 
 %We refer the reader to \cite{LWP_diophantine, GWP_dioph} for a deep introduction to this topic.
Such a closeness is quantified by an index associated to $c$($=:\nu_c$) introduced by Arnold \cite{Arnold}, see Definition \ref{inf_index}. In particular, if such an index is small enough (less than one), $\jb{\Xi^{(1)}(\bar{n})}$ exhibits nontrivial lower bounds; see Lemma \ref{lem_res}. On the other hand, if such  an index is large (bigger than  one), the lack of a nontrivial  lower bound suppresses the regularization by noise mechanism, and the resulting behavior coincides with that of the unmodulated one; see Remark \ref{ind_large}.   %as well as suitable smallness of the index $\nu_c$ (which in particular prevents $\Xi^{(1)}$ and $\Xi^{(2)}$ to vanish)
For this reason, in the sequel we focus on the case, $\nu_{c} < 1$, where the irregularity of the modulation $w$ plays a role. 

\remark \rm \label{introrestrict} In particular, since $c\in \mathbb{Q}$ when $\alpha=4$, this case is not considered in Theorems \ref{THM:11}-\ref{THM:G1}.
\begin{definition}\label{inf_index}
A real number $r$ is called of type $(K,\nu) $ (or simply of type $\nu$) if there exists positive $K$ and $\nu$ such that for all pairs of integers $(m,n)$, we have 
\begin{align}\label{index}
\left| r - \frac{m}{n}\right| \geq \frac{K}{|n|^{2+\nu}}.
\end{align}
Moreover, given a real number $r$ define the minimal type index $\nu_{r}$ of $r$ by 
\begin{align*}
\nu_{r }: = \begin{cases}
\infty, ~ \text{if} ~ r \in \q \\
\inf \{\nu > 0 : r ~ \text{is of type } \nu \}, ~ \text{if } ~ r \notin\q.
\end{cases}
\end{align*}
\end{definition} 

\remark\rm\label{v_zero} Note that  $\nu_{r}\ge 0$ and, 
as observed in \cite{LWP_diophantine}, $\nu_{r}=0$ for almost every $r\in \R$.
\smallskip

Next, in view of the relation 
\begin{align}\label{relphase}
\Xi^{(1)}(n,n_1,n_2)=\Xi^{(2)}(-n_1,-n,n_2),
\end{align}
we can apply similar considerations  to the second resonance function $\Xi^{(2)}$ (which vanishes when $n_1=0$).  As a result,  we study the well-posedness theory for \eqref{mild2bis} by assuming $\ft{u}_0(0)=0$. 
Under this restriction, as well as the above mentioned smallness of the index associated to $c$, we prove that the regularization phenomenon occur; see Theorem \ref{THM:11}. Namely, for any fixed $s\in \R$ we  obtain well-posedness in $H^s(\T)\times H^s(\T)$ provided $w$ is sufficiently irregular; see Proposition \ref{PROP:main}, Corollary \ref{GWP_2} and Lemma \ref{lem_region2} for the precise statements. 

%Before properly stating our main contributions, we recall the definition of the above mentioned index. 

%we present their unmodulated counterparts proved in  \cite{LWP_diophantine, GWP_dioph} by first 

\smallskip

\begin{theorem}\label{THM:11}
Given $\alpha\in (0,4)\setminus\{1\}$,  $\rho \ge\frac 12$,  $\frac12< \g < 1$, $c$ as in \eqref{roots1}, $\nu_c$ as in Definition \ref{inf_index}, $0<\eps<1-\nu_{c}$, and $T > 0$, let
$w$ be $(\rho,\g)$-irregular on $[0, T]$ in the sense of Definition~\ref{DEF:ir}, and $u_0$ be with mean zero. 
\smallskip

\noi
\textup{(i) (local well-posedness).}
Suppose that $s\in \R$ satisfies 
\begin{align}\label{condsintro}
s\ge 1-\rho(1-\nu_{c}-\eps).
\end{align} 
Then, the modulated KdV system \eqref{cmkdv}
is semilinearly locally well-posed in $H^s(\T)\times H^s(\T)$.
\smallskip

\smallskip

\noi
\textup{(ii) (nonlinear smoothing).}
In addition to the hypotheses of Part \textup{(i)}, 
suppose that $r:=s_0-s>0$ satisfies 
 one of the following conditions\textup{:}
\begin{equation}\label{nsmoothintro}
\begin{split}
&\textup{(ii.a)}\ \ r\le s+\rho(1-\nu_c-\eps)-1\quad \text{and}\quad   s\le \rho(1+\nu_c+\eps),\\
&\textup{(ii.b)}\ \ r\le 2\rho-1\quad \text{and}\quad s> \rho(1+\nu_c+\eps).
\end{split}
\end{equation}

\noi
Given $(u_0,v_0) \in H^s(\T)\times H^s(\T)$, 
let $(u,v)
\in C([0, \tau]; H^s(\T)\times H^s(\T))$ be the  solution 
to the coupled modulated KdV system~\eqref{cmkdv}
on $\T$
with $(u,v)|_{t = 0} = (u_0,v_0)$, 
where 
$\tau \in (0,  T]$ denotes the local existence time.
Then, we have 
\begin{align*}
 (u - e^{-w(t) \dx^3}u_0,v- e^{-\alpha w(t) \dx^3}v_0) \in C([0, \tau]; H^{s_0}(\T)\times H^{s_0}(\T)).
\end{align*}

\noi
\textup{(iii) (global well-posedness).}
In addition to the hypotheses of Part \textup{(i)}, suppose that 
  $w$ is $(\rho,\g)$-irregular on $\R_+$
and  that either $s\ge 0$ or $\rho, s<0$
satisfy one of the following conditions\textup{:}
\begin{equation}
\label{condG}
\begin{split}
\textup{(iii.a)} &\ \ \tfrac {1}{2} \le \rho < \tfrac{5-2\g}{2(1-\g)(1-\nu_{c} -\eps)} \quad \text{and} \quad s > \g \left( 1-\rho(1-\nu_{c}-\eps)\right), \\ 
\textup{(iii.b)} &\ \  \rho \geq \tfrac{5-2\g}{2(1-\g)(1-\nu_{c} -\eps)} \quad \text{and} \quad s >  \frac{
2(2\gamma-1)\bigl(1 - \rho(1-\nu_c-\varepsilon)\bigr)
- 3(1-\gamma)^2
}{
2(-\gamma^2+3\gamma-1)
}.
\end{split}
\end{equation}
Then, the modulated KdV system \eqref{cmkdv}
is globally well-posed in $H^s(\T)\times H^s(\T)$.
\end{theorem}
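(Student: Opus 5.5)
The proof of Theorem \ref{THM:11} will follow the nonlinear Young integral scheme of \cite{CG1,CGLLO,GLO}. We interpret the two integrals in \eqref{mild2bis} through drivers $X^1,X^2$, defined on the Fourier side by
\[
\F(X^1_{t,r}(f,g))(n)=\tfrac{i n}{2}\sum_{n=n_1+n_2}\Phi^w_{t,r}(\Xi^{(1)}(\bar n))\ft f(n_1)\ft g(n_2),
\]
with the analogous formula for $X^2$ using $\Xi^{(2)}$. The system then reads as a coupled YDE $(\uu,\vv)(t)=(u_0,v_0)+\I^{X}(\uu,\vv)(t)$, to be solved by contraction.

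\textbf{Step 1: lower bounds on the resonance functions.} We use the factorization \eqref{eq_factori1}. If $\nu_c<1$ and $0<\eps<1-\nu_c$, Definition \ref{inf_index} gives
\[
|n_1-cn|=|n|\,|c-n_1/n|\ges |n|^{-1-\nu_c-\eps}, \qquad n\neq 0,
\]
and the same holds for $c_*=1-c$. At least one of the two factors $|n_1-cn|$, $|n_1-c_*n|$ is comparable to $\max(|n|,|n_1|)$, since $c\neq c_*$. Hence
\[
|\Xi^{(1)}(\bar n)|\ges |n|^{1-\nu_c-\eps}\max(|n|,|n_1|)\ges \max(|n|,|n_1|,|n_2|)^{1-\nu_c-\eps}\,|n|^{\cdots}.
\]
By \eqref{relphase}, the same type of bound holds for $\Xi^{(2)}$ with $n_1\neq0$. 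This is where the mean-zero assumption on $u_0$ enters, and it is preserved by the flow since the $u$-equation is a derivative. I expect this is the content of Lemma \ref{lem_res}.

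\textbf{Step 2: bilinear estimates and local theory.} We insert the bound from Step 1 into \eqref{eq116intro}. This gives
\[
\|X^j_{t,r}(f,g)\|_{H^{s}}\les |t-r|^\g\|f\|_{H^s}\|g\|_{H^s},
\]
provided the weight $|n|\,\jb{n}^{s}\jb{n_1}^{-s}\jb{n_2}^{-s}\jb{\Xi}^{-\rho}$ is summable in the convolution sense. A case analysis on which frequency is largest, combined with Cauchy–Schwarz/Young for sequences, shows this holds when $s\ge 1-\rho(1-\nu_c-\eps)$. This is Lemma \ref{lem_nonlinsmooth}.

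Since $\g>\frac12$, the sewing lemma (Lemma \ref{LEM:sew}) and the Young integral bounds of Subsection \ref{SUBSEC:Y2} make $\I^X$ a contraction on a ball of $C^\g([0,\tau];H^s)$ for small $\tau$. This yields part (i).

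\textbf{Step 3: nonlinear smoothing.} For part (ii) we rerun the same estimate with output regularity $s_0=s+r$. The extra factor $\jb{n}^r$ is absorbed by the surplus decay $\jb{\Xi}^{-\rho}$. Case (ii.a) is the regime where the Diophantine loss dominates, and case (ii.b) is the regime where the large-frequency bound $|\Xi|\ges|n|^2$ caps the gain at $2\rho-1$.

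\textbf{Step 4: global theory, the main obstacle.} For $s\ge0$ we argue as in \cite[Proposition 6.1]{CGLLO}. The modulated $L^2$ conservation (Proposition \ref{PROP:L2}), adapted to the quantity $\|u\|^2+\|v\|^2$, which is conserved by the Majda–Biello structure, is combined with persistence of regularity.

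For $s<0$ we run the modulated $I$-method with the scaling of \cite{GLO} (Subsection \ref{Subsec4}). Setting $u^\ld(t,x)=\ld^{-2}u(\ld^{-3}t,\ld^{-1}x)$ with modulation $w^\ld$, we estimate how the irregularity constant $\|\Phi^{w^\ld}\|_{\W^{\rho,\g}}$ scales with $\ld$. We then prove an almost conservation law for $\|Iu\|^2+\|Iv\|^2$, where $I$ is the Fourier multiplier equal to $1$ below frequency $N$, with increment $\les N^{-\beta}$ per unit local time. The commutator term is estimated via the Young integral using Step 1, now with the multiplier difference $m(n)^2-m(n_1)m(n_2)$.

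Balancing $N$, $\ld$, the local time $\sim(\ld\text{-dependent irregularity})^{-1/(1-\g)}$, and the number of iterations produces the thresholds \eqref{condG}. This means (iii.a) in the regime where the local theory constraint $\g(1-\rho(1-\nu_c-\eps))$ binds, and (iii.b) otherwise.

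The delicate point, where I would spend most effort, is the almost-conservation estimate near the near-resonant set $n_1\approx cn$. There the weight $m$ is not smooth enough to cancel, so only the Diophantine gain $|n|^{1-\nu_c-\eps}$ is available. Tracking this exponent through the scaling bookkeeping is where $\rho(1-\nu_c-\eps)$ replaces $\rho$ relative to Theorem \ref{THM:1}.
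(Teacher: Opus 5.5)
Your treatment of (i), (ii) and of (iii) for $s\ge 0$ follows the paper: Lemma \ref{lem_res}, the case analysis of Lemma \ref{lem_nonlinsmooth}, sewing and contraction as in \cite[Proposition 3.8]{CGLLO}, and $L^2$ conservation plus persistence of regularity.

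There is one slip in Step 1. The factorization \eqref{eq_factori1} gives $|\Xi^{(1)}(\bar n)|\gtrsim |n|\cdot|n|^{-1-\nu_c-\eps}\max(|n|,|n_1|)=|n|^{-\nu_c-\eps}\max(|n|,|n_1|)$, not $|n|^{1-\nu_c-\eps}\max(|n|,|n_1|)$. On the balanced near-resonant set this gives $|n|^{1-\nu_c-\eps}$, and that is what produces the threshold $s\ge 1-\rho(1-\nu_c-\eps)$ you quote in Step 2. Taken literally, your bound would instead give $s\ge 1-\rho(2-\nu_c-\eps)$.

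The genuine gap is in (iii) for $s<0$. The scaling you write, $u^\ld(t,x)=\ld^{-2}u(\ld^{-3}t,\ld^{-1}x)$, is the standard KdV scaling of \cite{CGLLO}. It is only the member $b=3$ of the family \eqref{scaling1}, and it cannot reach the stated thresholds.

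\begin{itemize}
\item \textbf{Barrier at $-\tfrac32$.} By \eqref{IT1}, $\|(Iu_0^\ld,Iv_0^\ld)\|_{L^2}\lesssim\ld^{-\frac32-s}N^{-s}\|(u_0,v_0)\|_{\H^s}$. Small modified energy therefore forces $s>-\frac32$ for every $\rho$. But $s_2$ in (iii.b) tends to $-\infty$ as $\rho\to\infty$.
\item \textbf{Nothing to balance.} Since $3<\frac{3}{2(1-\g)}$ for $\g>\frac12$, Lemma \ref{LEM:tri2} gives driver norms $\lesssim\ld^{-\frac32+3(1-\g)}\le 1$. So the local time does not shrink with $\ld$, and your balancing heuristic has nothing to balance.
\item \textbf{Even (iii.a) fails.} Condition \eqref{cond35} reduces to $\ld^3K_3(\ld,N)\ll 1$. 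With \eqref{Ktilde29} this yields only $s>\max\big(-\frac32,\frac{1-\rho(1-\nu_c-\eps)}{3-2\g}\big)$. Since $\g(3-2\g)>1$ on $(\frac12,1)$, this is strictly worse than $s_1=\g(1-\rho(1-\nu_c-\eps))$.
\end{itemize}

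The missing idea is to keep the time exponent $b$ free: $u^\ld=\ld^{1-b}u(\ld^{-b}t,\ld^{-1}x)$ and $w^\ld=\ld^3w(\ld^{-b}t)$. This is legitimate only because $\dt w^\ld$ absorbs the mismatch between time and space dilations. The argument then runs as follows.

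\begin{itemize}
\item Take $b>\max(\frac32-s,\frac{3}{2(1-\g)})$.
\item Accept the shrinking local time $\tau\sim\ld^{(\frac32-b(1-\g))\theta}$, with $\theta=\frac1{\g-\s}>\frac1{2\g-1}$ rather than an exponent $\frac1{1-\g}$.
\item Optimize in $(b,\theta)$ as in Lemma \ref{lem_region2}:
\begin{itemize}
\item $b\downarrow\frac{3}{2(1-\g)}$ gives $s_1$. So $s_1$ is not the local-theory constraint, which is $1-\rho(1-\nu_c-\eps)$.
\item $b\downarrow\frac32-s$ together with $\theta\downarrow\frac1{2\g-1}$ gives $s_2$. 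Its denominator $-\g^2+3\g-1=(2\g-1)(2-\g)+(1-\g)^2$ records exactly this limit.
\end{itemize}
\end{itemize}

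In the commutator estimate you must also track that on $\Z_\ld$ the Diophantine bound degrades to $\ld^{-2-\nu_c-\eps}|n|^{1-\nu_c-\eps}$. This loss is what turns the $\ld^{-3\rho}$ coming from the irregularity scaling into $\ld^{-\rho(1-\nu_c-\eps)}$ in $K_b(\ld,N)$.
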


\remark \rm Note that, in view of Remark \ref{v_zero}, Theorem \eqref{THM:11} above (combined with \cite[Theorem 1.5]{GLO}) guaranties that for {\it almost every $\alpha\in (0,4)$}, the Majda-Biello system \eqref{cmkdv} is globally well-posed in $H^s(\T)\times H^s(\T)$, provided $\rho=\rho(s)>0$ is sufficiently large. 

\medskip

%Although it is not in the author's intention to compare the conditions of Theorem \ref{THM:11} with the ones of Theorem \ref{THM:1} on each case, it is interesting to perform an ``asymptotic analysis". 

%\noi We define the following drivers associated to these equations.
%\begin{align}
%\begin{aligned}
%X^{1}_{t,r}(f_1,f_2)
%=\frac 12\displaystyle\int_r ^t \uw(t')^{-1}(\partial_x((\uwa(t') f_1(t'))(\partial_x((\uwa(t') f_2(t')))dt' \\
%X^{2}_{t,r}(f_1,f_2)
%=\displaystyle\int_r ^t \uwa(t')^{-1}(\partial_x((\uw(t') f_1(t'))(\partial_x((\uwa(t') f_2(t')))dt'
%\end{aligned}
%\label{K1}
%\end{align}

%\noi We now recall some useful definitions.

%\begin{definition}
% Given $s \in \R$, $0 < \g < 1$,  $k \in \N$, and $T > 0$, 
%we define the space 
%$\cX^{s, \g}_k([0, T]\times\M)$ of drivers 
%as follows; for $k \ge 2$, we set
%\begin{align}
%\begin{split}
%& \cX^{s, \g}_k ([0, T]\times \M) \\
%& \quad = 
%%X(0) = DX[0]= 0 \text{ and }\updl X = 0\big\}, 
%\end{split}
%\label{X1}
%\end{align}

%\noi
%endowed with the norm
%\begin{align}
%\|X\|_{\cX^{s, \g}_k([0, T]\times \M)} 
%=  \|X\|_{C^\g_{2, T}\Lip_k(H^s(\M))} + \|DX\|_{C^\g_{2, T}\Lip_{k-1}(H^s(\M);\cL_1(H^s(\M)))}.
%\label{X2}
%\end{align}

%\end{definition}

In addition to the well-posedness theory, we also obtain
convergence of the Galerkin approximation. That is, let us fix $N \in \N$ and consider following modulated KdV system  on the circle:
\begin{equation}
\label{kdv1x}
\begin{cases}
\dt u^{N}+  \dx^3 u^{N}
 \cdot \dt w (t)=\frac{1}{2}\dx \P_N\big((\P_Nv^N)^2\big),\\
\dt v^N+  \al\dx^3 v^N \cdot \dt w(t)=\dx \P_N\big(\P_Nu^N\cdot  \P_Nv^N \big),\\
(u^N,v^N)_{|t=0}=(u_0,v_0),
\end{cases}
\end{equation}

\noi
where   $\P_N$ denotes  the usual frequency projector onto the (spatial) frequencies 
$\{|n| \leq N\}$ defined in \eqref{Diri1}. 
%Then, a slight modification of the proof of Theorem \ref{THM:1}\,(i)
%yields  the following results
%for the modulated KdV \eqref{kdv1} on the circle.
Then, the following result holds true:
\begin{theorem}\label{THM:G1}
Given $\alpha\in (0,4)\setminus\{1\}$,  $\rho \ge\frac 12$,  $\frac12< \g < 1$, $c$ as in \eqref{roots1}, $\nu_c$ as in Definition \ref{inf_index}, $0<\eps<1-\nu_{c}$, and $T> 0$, let
$w$ be $(\rho,\g)$-irregular on $[0, T]$ in the sense of Definition~\ref{DEF:ir}, and $u_0$ be with mean zero. 
Suppose that $s\in \R$ satisfies:
\begin{align}\label{condsintrostrict}
s> 1-\rho(1-\nu_{c}-\eps).
\end{align} 

\noi
Then, given $(u_0,v_0) \in H^s(\T)\times H^s(\T)$, 
the solution $(u^N,v^N)$ to the truncated modulated KdV equation~\eqref{kdv1x}
on $\T$
converges to the solution $(u,v)$
to the modulated KdV system \eqref{cmkdv}
on $\T$
in $C([0, \tau]; H^s(\T)\times H^s(\T))$, 
where $\tau \in (0, T]$ denotes the local existence time of the modulated KdV system
\eqref{cmkdv}.
Moreover, 
given any $r > 0$, 
the rate of convergence of $(u^N,v^N)$ to $(u,v)$ is
uniform in $(u_0,v_0) \in B_r$ 
where
$B_r$ denotes the ball 
in $H^s(\T)\times H^s(\T)$
of radius $r > 0$ centered at the origin.
\end{theorem}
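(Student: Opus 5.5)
We prove Theorem \ref{THM:G1} by recasting the truncated system \eqref{kdv1x} as a nonlinear Young differential equation with truncated drivers, and comparing it with the YDE associated to \eqref{mild2bis}.

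\textbf{Step 1: truncated drivers.} Passing to the interaction representation $(\uu^N,\vv^N)=(\uw(t)^{-1}u^N,\uwa(t)^{-1}v^N)$, the system becomes
\begin{align*}
\uu^N=u_0+\I^{X^{1,N}}(\vv^N), \qquad \vv^N=v_0+\I^{X^{2,N}}(\uu^N,\vv^N).
\end{align*}
The drivers $X^{j,N}$ are defined like $X^j$ in \eqref{driver1}, but with the outer projector $\P_N$ applied to the output frequency $n$ and the inner projectors $\P_N$ applied to $f_1,f_2$. Their Fourier multipliers are therefore those of $X^j$ multiplied by $\ind_{|n|\le N}\ind_{|n_1|\le N}\ind_{|n_2|\le N}$. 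Since the estimates of Lemma \ref{lem_nonlinsmooth} only use the pointwise bound \eqref{eq116intro} together with the lower bounds on $\jb{\Xi^{(j)}}$ from Lemma \ref{lem_res}, the same proof gives, under \eqref{condsintro}, $X^{j,N}\in\cX^{s,\g}$ with norms bounded uniformly in $N$. The contraction argument of Theorem \ref{THM:11}(i) then yields solutions $(\uu^N,\vv^N)$ on a common interval $[0,\tau]$, with $\tau$ depending only on $\|(u_0,v_0)\|_{H^s\times H^s}$ and $\sup_N\|X^{j,N}\|$. After possibly shrinking $\tau$, we may take it equal to the local existence time of \eqref{cmkdv}, because the full drivers $X^j$ satisfy the same bounds.

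\textbf{Step 2: convergence of drivers with a rate.} This is the main point, and the reason the inequality in \eqref{condsintrostrict} is strict. The difference $X^j-X^{j,N}$ is supported on frequencies with $\max(|n|,|n_1|,|n_2|)>N$. We redo the bilinear estimate with regularity $s-\dl$ in place of $s$ for some small $\dl>0$; this is admissible because $s-\dl\ge 1-\rho(1-\nu_c-\eps)$. On the region where the maximal frequency exceeds $N$, the surplus weight $\jb{n_{\max}}^{-\dl}$ gives
\begin{align*}
\|X^j-X^{j,N}\|_{\cX^{s,\g}}\les N^{-\dl}.
\end{align*}
Some care is needed on the outer projection, where the frequency $n$ itself is large. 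There the gain comes from the output weight $\jb{n}^{s}$ versus $\jb{n}^{s-\dl}$, so the estimate must be distributed carefully among $n,n_1,n_2$ in each case of Lemma \ref{lem_res}.

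\textbf{Step 3: stability.} We take the difference of the two fixed point equations, for instance
\begin{align*}
\vv-\vv^N=\I^{X^{2}}(\uu,\vv)-\I^{X^{2}}(\uu^N,\vv^N)+\I^{X^2-X^{2,N}}(\uu^N,\vv^N),
\end{align*}
and similarly for $\uu-\uu^N$. The Lipschitz stability of the nonlinear Young integral (Proposition \ref{PROP:young1}/sewing lemma) bounds the first term by $\tau^{\g}$ times the norms of $X^j$ times the norm of the difference, which is absorbed for small $\tau$. The second term is $O(N^{-\dl})$ uniformly for data in $B_r$. This gives convergence of $(\uu^N,\vv^N)\to(\uu,\vv)$ in $C([0,\tau];H^s\times H^s)$ with rate $N^{-\dl}$, uniform on $B_r$.

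If $\tau$ is not small enough for the absorption, we iterate over subintervals of length determined by $r$ and the a priori bounds. Finally, since $\uw$ and $\uwa$ are unitary on $H^s$, this convergence transfers to $(u^N,v^N)\to(u,v)$. We expect the main obstacle to be Step 2, namely extracting the $N^{-\dl}$ gain uniformly across all resonance regions.
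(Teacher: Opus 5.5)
Your architecture is the paper's route via Lemma \ref{lem_galapprox}, Lemma \ref{LEM:OBS1}(iv) and Proposition \ref{PROP:young1}(iii): uniform-in-$N$ bounds for $X^{j,N}$, convergence $X^{j,N}\to X^j$ in $\cX^{s,\g}_2$ with a rate, then stability of the nonlinear Young integral. The gap is the mechanism you propose for Step 2.

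Slack in $s$ alone does not produce a factor $\jb{n_{\max}}^{-\dl}$. The $H^s$-kernel $\jb{n}^{s}|n||\Phi^w_{t,r}(\Xi^{(j)}(\bar n))|\jb{n_1}^{-s}\jb{n_2}^{-s}$ equals $\jb{n}^{\dl}\jb{n_1}^{-\dl}\jb{n_2}^{-\dl}$ times the $H^{s-\dl}$-kernel. This prefactor is $\les\jb{n_{\max}}^{-\dl}$ only when $|n|\sim|n_1|\sim|n_2|$ or $|n|\ll|n_1|\sim|n_2|$. On the high--low sets $B_4,B_5$ of \eqref{BB15} it is only $\min(\jb{n_1},\jb{n_2})^{-\dl}$, because the output weight $\jb{n}^{s}$ cancels against the input weight at the comparable high frequency. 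So your sentence that there ``the gain comes from the output weight $\jb{n}^{s}$ versus $\jb{n}^{s-\dl}$'' is exactly where the argument fails, and the decay must come from the resonance instead.

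On these sets, for $X^1$ and for $X^2$ on $B_5$, one has $|\Xi^{(j)}|\ges|n|^3$, and there is room to spare. For $X^2$ on $B_4$ (low frequency in the mean-zero slot $f_1$), Lemma \ref{lem_res}(iii) only gives $|\Xi^{(2)}(\bar n)|\ges|n_1||n|^2$. The quantity to control is then $\sup_{|n|\ges N}|n|^{2-4\rho}\sum_{|n_1|\ll|n|}|n_1|^{-2s-2\rho}$. The $n_1$-sum converges because $2s+2\rho>1$ under \eqref{condsintrostrict}, so decay in $N$ can only come from $\rho>\frac12$, and $s$ plays no role. At $\rho=\frac12$ nothing can be extracted from \eqref{eq116intro}. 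Take $f_1=e^{ix}$ and $\ft f_2$ supported in $\{|n_2|>N\}$: then $X^{2,N}_{t,r}(f_1,f_2)=0$, while the best available bound is $\|X^{2}_{t,r}(f_1,f_2)\|_{H^s}\les\|\Phi^w\|_{\W^{\rho,\g}_T}|t-r|^{\g}\|f_2\|_{H^s}$, with no power of $N$.

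This is why the paper bounds $\ind_{\{\max(|n|,|n_1|,|n_2|)>N\}}\les N^{-\eta}|n_{\max}|^{\eta}$ and re-runs the case analysis of Lemma \ref{lem_nonlinsmooth}. That analysis closes under $\rho\ge\frac12+\frac\eta2$ and $s\ge1-\rho(1-\nu_c-\eps)+\eta$. Accordingly, Lemma \ref{lem_galapprox} and Proposition \ref{PROP:main}(iv) assume \eqref{galapprox_cond}, which contains the strict condition $\rho>\frac12$, even though the theorem is stated with $\rho\ge\frac12$.

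To repair Step 2 you need slack in both parameters, namely $0<\eta\le\min\big(2\rho-1,\,s-1+\rho(1-\nu_c-\eps)\big)$, with the $\rho$-slack doing the work on $B_4$ for $X^2$. With this change Steps 1 and 3 go through as you describe. One adjustment: run the stability estimate in $\CC^\s([0,\tau];\H^s)$ with $\s+\g>1$, not in $C([0,\tau];\H^s)$, since the Young integral is Lipschitz in the path only through its H\"older norm.
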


\remark \rm We recall that, for $\alpha\in (0,4)\setminus\{1\}$, $c$ as in \eqref{roots1},  $\nu_c$ as in Definition \ref{inf_index}, $0<\eps<1-\nu_c$, and $\frac{1}{2}<s^*_1\le s^*_2$ defined by
 \begin{align}\label{ss7}
s^*_1:= \frac{1+\nu_c+\eps}{2}, \quad s^*_2:=\text{max}\Big(\frac{6s^{*}_1-2(s^{*}_1)^2}{5-s^{*}_1}, \frac{2 s^{*}_1+9}{14} \Big),
\end{align} 
in \cite{LWP_diophantine,GWP_dioph} the author proved that the unmodulated Majda-Biello system
\begin{equation}
\label{ckdv}
\begin{cases}
\dt u+  \dx^3 u =\frac{1}{2}\dx v^2,\\
\dt v+  \al\dx^3 v =\dx (uv),\\
(u,v)_{|t=0}=(u_0,v_0),
\end{cases}
\end{equation}
%\noi For $c$ as in \eqref{roots1},  $\nu_c$ as in Definition \ref{inf_index} and $0<\eps<1-\nu_c$,  we define
%\begin{align}\label{ss7}
%s^*_1:= \frac{1+\nu_c+\eps}{2}, \quad s^*_2:=\text{max}\Big(\frac{6s^{*}_1-2(s^{*}_1)^2}{5-s^{*}_1}, \frac{2 s^{*}_1+9}{14} \Big).
%\end{align}
%Note that $s^*_2\ge s^*_1$.
is (for $u_0$ with mean zero) semilinearly locally well-posed in $H^{s}(\T)\times H^s(\T)$ provided $s\ge s^*_1$, and globally well-posed if $s\ge s^*_2$.
%\begin{theorem}\label{THM:H}{\textnormal{(\cite[Theorem 1]{LWP_diophantine}, \cite[Theorem 1.8]{GWP_dioph})}}
%Given $\alpha\in (0,4]\setminus\{1\}$, $s^*_1, s^*_2$ as in \eqref{ss7} and $u_0$ with mean zero. If $s\ge s^{*}_1$ then the system \eqref{ckdv} is semilinearly locally well-posed in $H^{s}(\T)\times H^s(\T)$. In addition, if $s\ge s^{*}_2$, the system is globally well-posed.
%\end{theorem}
To be precise, in the original statements, see  \cite[Theorem 1.8]{GWP_dioph},  the author takes into account two additional indexes $\nu_{d}, \nu_{d_*}$ associated with $\Xi^{(2)}$ in \eqref{res2intro}. Namely,
%\begin{align*}
%\Xi^{(2)}(n,n_1,n_2)=(1-\al)n_1 (n_1 -d n)(n_1 -d_*n), 
%\end{align*}
%with 
\begin{align}\label{dd}
d:=\frac{-3\alpha+\sqrt{3\alpha(4-\alpha)}}{2(1-\alpha)},\quad d_*:=\frac{-3\alpha-\sqrt{3\alpha(4-\alpha)}}{2(1-\alpha)}.
\end{align}
On the other hand, in view of \eqref{relphase}, \eqref{roots1} and \eqref{dd}, it is easy to verify that 
$$c=\frac{1}{d},\quad c_*=\frac{1}{d_*},$$ which, combined with Definition \ref{inf_index} and $c+c_*=1$, implies
\begin{align}\label{eq_index}
\nu_{d}=\nu_{c}=\nu_{c_*}=\nu_{d_*}.
\end{align}
Thus, we can state the results of \cite{LWP_diophantine, GWP_dioph} by replacing $\text{max}(\nu_{c},\nu_{d}, \nu_{d_*})$ with $\nu_c$.  We also remark that, in \cite{GWP_dioph}, the main challenges for obtaining  global well-posedness precisely rely on the fact that $\alpha\neq 1$. In fact, the lack of the relation \eqref{K3} prevents suitable pointwise cancellations in the multilinear estimates used in the $I$-method energy increments. To overcome such an issue, the author initially considers a first modified energy $E^{(1)}$ (see \cite[page 24]{GWP_dioph}) and then carefully chooses a second modified energy $E^{(2)}$ (see \cite[eq. (5.6)]{GWP_dioph}) designed to cancel the worst resonant terms. See \cite[Section 5]{GWP_dioph} for further details. In particular, since $\nu_c=0$ for almost every $\alpha\in (0,4)\setminus\{1\}$, \cite[Theorem 1.8]{GWP_dioph} essentially provides global well-posedness for \eqref{ckdv} in $H^s(\T)\times H^s(\T)$ for $s>\frac{5}{7}$, exhibiting a big discrepancy  with the case $\alpha=1$.
%On the other hand, our main result shows that such a discrepancy essentially disappears in the modulated framework. As a matter of fac
\remark \rm \label{large_a_intro}Note that, if $\alpha\in (-\infty,0)\cup(4,\infty)$, then $c,c_*$ are not real numbers, and $\Xi^{(1)}$ satisfies the inequality
$$|\Xi^{(1)}(\bar{n})| \ges |n| \big(\max (|n|, |n_1|)\big)^2,$$ 
resulting in a sharper estimate compared to \eqref{eq_res_1}, \eqref{eq_res_2}. Hence, a gain of smoothness is expected. However, since the focus of this paper is the interplay between Diophantine conditions on $c$ and the $(\rho,\gamma)$-irregularity of the modulation $w$, we do not consider this case.

\section{Preliminaries}\label{prel}
In this section, we recall some notations and preliminary results. 
%Finally, we recall a useful lemma on continuous and discrete convolutions.Then, we introduce the general theory of the nonlinear Young integrals and the Young differential equation, followed by giving a specific driver associated with the modulated KdV \eqref{maineq}.
Let $A\les B$ denote an estimate of the form $A\leq CB$ for some constant $C>0$. We write $A\sim B$ if $A\les B$ and $B\les A$, while $A\ll B$ denotes $A\leq c B$ for some small constant $c> 0$. 
We may write  $\les_{\al}$ and $\sim_{\al}$ to 
emphasize the dependence on an external parameter $\al$.
We use $C>0$ to denote various constants, which may vary line by line.

In expressing the dependence of a function $u$
on the time variable, we often use the short-hand notation
$u_t = u(t)$,  which is standard in probability theory and stochastic analysis.

%where $dx_\T$ denotes
% the normalized Lebesgue measure $ dx_\T =  (2\pi)^{-1}dx$.

\subsection{Function spaces} Given $\ld\ge 1$,  we set
$\Z_{\ld} = \Z/\ld$
,
$\Z^*_{\ld} = \Z_\ld\setminus\{0\}$ 
and 
$\T_\ld = \R / (2\pi \ld \Z)$.
We define 
the Fourier transform of a function $f$ on $\T_\ld$
by 
$$
\mathcal{F}_{\T_\ld}(f)(n)=\ft  f(n)=\int_{\T_\ld}f(x)e^{-i n x}\frac{dx}{2\pi}
$$

\noi
for $n\in\Z_{\ld}$.
Then, we have 
\begin{align}
\begin{split}
	f(x)
		& =
		\frac{1}{\ld} \sum_{ n \in\Z_{\ld}}\ft  f(n)e^{i  nx}, \\
		\frac{1}{2\pi}\int_{\T_\ld}|f(x)|^2 dx
		& =\frac{1}{\ld}\sum_{n\in\Z_{\ld}}|\ft  f(n)|^2,\\
		\ft{fg}(n)
		& =\frac{1}{\ld}
		\sum_{ \substack{n_1, n_2 \in \Z_\ld\\n = n_1+n_2}}
		\ft  f(n_1)\ft  g(n_2).
	\end{split}
	\label{FT3}
\end{align}
 
 Given $N \in \N$, 
we also denote by  $\P_N$
 the Dirichlet projector onto the frequencies 
$\{|n| \leq N\}$
defined by 
\begin{align}
 \ft{\P_N f}(n)= \ind_{\{|n|\leq N\}} \ft f(n).
 \label{Diri1}
\end{align}
%In particular, 
%$\S'(\R)$ denotes the space of tempered distributions on $\R$, 
%while 
%$\S'(\T) = \mathcal D'(\T)$
%denotes the space of distributions on $\T$.
Then, given $s \in \R$, we define the non-homogeneous and homogeneous Sobolev spaces $H^{s}(\T_\ld)$, $\dot H^{s}(\T_\ld)$  via
\begin{align*}
\|  f  \|^2_{H^{s}(\T_\ld)} :=
 \sum_{n\in \Z_\ld} \jb{n}^{2s} |\ft f(n)|^2,\\
 \|  f  \|^2_{\dot H^{s}(\T_\ld)} :=
 \sum_{n\in \Z^*_\ld} |n|^{2s} |\ft f(n)|^2,
\end{align*}
where $\jb{\cdot}=(1+|\cdot|^2)^{\frac{1}{2}}$. To simplify the notation, we define $\H^s(\T_\ld):=~H^{s}(\T_\ld)\times H^s(\T_\ld)$
endowed with the norm
\begin{align*}
\|(f_1,f_2)\|^2_{\H^s(\T_\ld)}:=\|f_1\|^2_{\H^s(\T_\ld)}+\|f_2\|^2_{\H^s(\T_\ld)},
\end{align*}
and, when there is no confusion on the spatial domain, we may simply write $\H^s_x$, or $\H^s$ etc.
 \medskip
 
Given $k \in \N$, 
let $V, V_1, \dots, V_k$  be separable Hilbert spaces.
We use 
\begin{align}\label{tensor}
\cL_k\Big(\bigotimes_{j = 1}^k V_j; V\Big)
\end{align}

\noi
 to denote 
the Banach space of bounded $k$-linear operators 
on $\bigotimes_{j = 1}^k V_j$ 
(equipped with the Hilbert tensor norm) %product) 
with values in $V$.
When $V_j =  V$ for $j =1, \dots,k$,
we simply set  
 $\cL_k(V)=\cL_k(V^{\otimes k}; V)$. 

Let $V$ be a Banach space and $T>0$.
For $n\in\N$, we denote 
\begin{align*}
\Delta_{n, T} = 
\big\{ (t_1, \ldots, t_n) \in [0,T]^n: \ t_i > t_j 
\text{ for } i < j\big\}.
\end{align*}
We denote by $C_{n,T}V$ 
the space of continuous functions 
from $\Delta_{n,T}$ to $V$. When $n=1$,
we may write $C_T V$ for simplicity, 
and equip this space with the supremum norm: 
\begin{align*}
\|f\|_{C_T V} = \|f\|_{L^\infty_T V} = \sup_{0\leq t \leq T} \|f(t)\|_V.
\end{align*}

We define the coboundary operator 
$\updl: C_{n,T} V  \to C_{n+1,T} V$ 
as follows; 
given  $f\in C_{n,T} V$ 
and $(t_1,\ldots, t_{n+1}) \in \Dl_{{n+1},T}$, 
we set
\begin{align*}
(\updl f)_{t_1,\ldots , t_{n+1}} = 
\sum_{k=1}^{n+1} (-1)^{n-k} f_{t_1, \ldots, t_{k-1}, t_{k+1}, \ldots, t_{n+1}}.
\end{align*}

\noi
For example, for $f\in C_T V$
and 
$g\in C_{2,T} V$, 
we have 
\begin{align}
\begin{split}
(\updl f )_{t,r} &= f_t - f_r, \\
(\updl g)_{t_1,t_2,t_3} &= 
g_{t_1,t_3} - g_{t_1,t_2} - g_{t_2,t_3}
\end{split}
\label{dl1}
\end{align}

\noi
for $(t,r)\in\Dl_{2,T}$
and $(t_1,t_2,t_3) \in \Dl_{3,T}$.
As noted in \cite{GT10}, 
the sequence 
\begin{align*}
0 \too \R \too C_{1, T}V \stackrel{\updl}{\too} C_{2, T}V
\stackrel{\updl}{\too} C_{3, T}V
\stackrel{\updl}{\too} \cdots
\end{align*}

\noi
is exact. 
In particular, we have 
 $\updl\circ\updl =0$ and 
if $f \in C_{n,T} V$ with $\updl f =0$, 
then there exists a $g\in C_{n-1,T}V$ 
such that $f= \updl g$; 
see, for example,  \cite[Lemma 2.1]{GT10}.

Given $0 < \g < 1$, we denote by $C^\g_T V = C^\g([0, T]; V)$ the space of $\g$-H\"older continuous functions taking values in $V$, endowed  with the seminorm:
\begin{align*}
\|f\|_{C^\g_T V} = \sup_{(t,r)\in \Dl_{2,T}} 
\frac{\|(\updl f)_{t,r}\|_V}{|t-r|^\g}.
\end{align*}

\noi
We also define 
 $\CC^\g_T V = \CC^\g([0, T]; V)$ via the norm:
\begin{align}
\| f \|_{\CC^\g_TV} = \| f \|_{L^\infty_TV} + \|u\|_{C^\g_T V}.
\label{Ho2a}
\end{align}

\noi 
We also introduce the spaces 
$C^\g_{n,T}V$, $n=2,3$, 
equipped with the following H\"older-type norms;
 for $g\in C_{2,T}V$ and $h\in C_{3,T}V$, we set
\begin{align}
\begin{split}
\| g\|_{C^\g_{2,T} V} & 
= \sup_{(t,r) \in \Dl_{2,T}} \frac{\|g_{t,r} \|_{V} }{|t-r|^{\g}}, \\
\| h\|_{C^\g_{3,T} V} & 
= \inf_{0<\al<\g} 
\sup_{(t_1,t_2,t_3) \in \Dl_{3,T}} 
\frac{ \|h_{t_1,t_2,t_3}\|_V}
{|t_1-t_2|^{\al} |t_2-t_3|^{\g-\al}}.
\end{split}
\label{Ho2}
\end{align}

\noi For $n=2,3$, we also set 
$C^{1+}_{n,T}V = \bigcup_{\g>1}C^\g_{n,T}V$.

\medskip

We essentially follow the notation of \cite[Section 2]{CGLLO, GLO}, suitably adapted to the present Sobolev space framework. 

Let us fix $s,s_0\in \R$, with  $s_0\ge s$. 
We  denote by $\Lip_2(\H^s; H^{s_0})$ 
the Banach space of 
locally Lipschitz maps $f:\H^s\to H^{s_0}$
with polynomial growth of order $2$
such that 
\begin{align}
\|f\|_{\Lip_2(\H^s;\, H^{s_0})} := 
\sup_{x,y\in \H^s} 
\frac{\|f(x)-f(y)\|_{H^{s_0}}}{\|x-y\|_{\H^s} \big(1+\|x\|_{\H^s}+\|y\|_{\H^s}\big)}
<\infty .
\label{Lip1}
\end{align}
%When $s = s_0$, 
%we simply set $\Lip_2(H^s)= \Lip_2(\H^s;H^{s})$. 
%When $V = W$, 
%we simply set $\Lip_2(V)= \Lip_2(V;V)$.
Also, we say that $f\in~\Lip^2_2(\H^s; H^{s_0})$ 
if 
\smallskip

\begin{itemize}
\item[(i)] $f\in\Lip_2(\H^s; H^{s_0})$,

\smallskip
\item[(ii)]
 $f$ is Fr\'echet differentiable 
with 
$Df \in \Lip(\H^s;\cL_1(\H^s; H^{s_0}))$,

\end{itemize}

where, for Banach spaces $V,W$,
$\Lip(V;W)$ denotes the usual space of Lipschitz continuous functions from $V$ to $W$.

\smallskip

\noi
From \eqref{Ho2} and \eqref{Lip1}, we have 
\begin{align}
\|f \|_{C^\g_{2, T}\Lip_2(\H^s;H^{s_0})}
= \sup_{(t,r) \in \Dl_{2,T}}
\frac 1{|t-r|^{\g}}
\sup_{x,y\in \H^s} 
\frac{\| f_{t, r}(x)- f_{t, r}(y)\|_{H^{s_0}}}{\|x-y\|_{\H^s}\big(1+\|x\|_{\H^s}+\|y\|_{\H^s}\big)}.
\label{Ho3}
\end{align}

\smallskip

\noi
In addition, 
we  use $\Lip_2(\H^s, \H^{s_0}; H^{s_0})$
%$\subset  \Lip_2(\H^s;H^s)$ 
to denote the Banach space of 
locally Lipschitz maps $f:\H^{s}\to H^{s_0}$
such that 
\begin{align}
\|f\|_{\Lip_2(\H^s, \H^{s_0}; H^{s_0})}
= 
\sup_{x,y\in \H^{s_0}} 
\frac{\|f(x)-f(y)\|_{H^{s_0}}}
{G_{s,{s_0}}(x, y)}
<\infty , 
\label{Lip2}
\end{align}
\noi
where $G_{s, s_0}(x, y)$ is given by 
\begin{align*}
G_{s, s_0}(x, y)
& = \|x-y\|_{\H^{s_0}} \big(1+\|x\|_{\H^{s}}+\|y\|_{\H^{s}}\big)\\
&\quad+ \|x-y\|_{\H^{s}}
 \big(1+\|x\|_{\H^{s_0}}+\|y\|_{\H^{s_0}}\big).
\end{align*}

\noi
Then, 
from \eqref{Ho2} and \eqref{Lip2}, we have 
\begin{align}
\|f \|_{C^\g_{2, T}\Lip_2(\H^s, \H^{s_0}; H^{s_0})}
= \sup_{(t,r) \in \Dl_{2,T}}
\frac 1{|t-r|^{\g}}
\sup_{x,y\in \H^{s_0}} 
\frac{\| f_{t, r}(x)- f_{t, r}(y)\|_{H^{s_0}}}{
G_{s, s_0}(x, y)}.
\label{Ho4}
\end{align}

\noi
%In addition, suppose that $V_0 \hookrightarrow V$
%is a Banach subspace of $V$.
%Given an integer $k \ge 2$, %$k \in \N$, 
%we  use $\Lip_k(V, V_0; V_0)\subset  \Lip_k(V)$ 
%to denote the Banach space of 
%locally Lipschitz maps $f:V_0\to V_0$
%such that 
%\begin{align}
%\|f\|_{\Lip_k(V, V_0; V_0)}
%= \sup_{x,y\in V} \frac{\|f(x)-f(y)\|_{V_0}}{G_{V, V_0}(x, y)}<\infty , 
%\label{Lip2}
%\end{align}

%\noi
%where $G_{V, V_0}(x, y)$ is given by 
%\begin{align*}G_{V, V_0}(x, y)& = \|x-y\|_{V_0} \big(1+\|x\|_{V}+\|y\|_{V}\big)^{k-1}\\& \quad + \|x-y\|_{V} \big(1+\|x\|_{V}+\|y\|_{V}\big)^{k-2}\big(1+\|x\|_{V_0}+\|y\|_{V_0}\big)\end{align*}\noi
%Then, from \eqref{Ho2} and \eqref{Lip2}, we have \begin{align}\|f \|_{C^\g_{2, T}\Lip_k(V, V_0; V_0)}
%= \sup_{(t,r) \in \Dl_{2,T}}
%\frac 1{|t-r|^{\g}}\sup_{x,y\in V} \frac{\| f_{t, r}(x)- f_{t, r}(y)\|_{V_0}}{G_{V, V_0}(x, y)}.\label{Ho4}\end{align}

\medskip

Given $s \in \R$, $0 < \g < 1$, $T > 0$, and $\ld\ge 1$, 
we define the space 
$\cX^{s, \g}_2([0, T]\times\T_\ld)$ of drivers 
as follows;
\begin{align}
\begin{split}
& \cX^{s, \g}_2 ([0, T]\times \T_\ld):=
\big\{X \in C^\g_{2, T} \Lip_2^2(\H^s;H^s):
X(0) = DX[0]= 0 \text{ and }\updl X = 0\big\}, 
\end{split}
\label{X1}
\end{align}

\noi
endowed with the norm
\begin{align}
\|X\|_{\cX^{s, \g}_2([0, T]\times \T_\ld)} 
:=  \|X\|_{C^\g_{2, T}\Lip_2(\H^s;H^s)} + \|DX\|_{C^\g_{2, T}\Lip(\H^s;\,\cL_1(\H^s;H^s))}.
\label{X2}
\end{align}

\noi
%When $k = 1$, we simply set
%\begin{align}
% \cX^{s, \g}_1 ([0, T]\times \T) 
%=  C^\g_{2, T} \Lip_1(H^s(\T))
%\text{ ($=  C^\g_{2, T} \L_1(H^s(\T))$)}.
%\label{X1z}
%\end{align}

\noi
In \eqref{X1}, $X(0) = 0$ (and $DX[0]= 0$) means $X_{t, r}(0) = 0$ 
(and $DX_{t, r}[0]= 0$, respectively)
for any $(t, r) \in \Dl_{2, T}$, 
where $DX_{t, r}[0]$
denotes the Fr\'echet derivative of $X_{t, r}$ at $u = 0 \in \H^s$.
%Similarly, we define 
%$\dot \cX^{s, \g}_k([0, T]\times\T)$ by setting\begin{align}\begin{split}& \dot \cX^{s, \g}_k ([0, T]\times \T) \\& \quad = \big\{X \in C^\g_{2, T} \Lip_k^2(\dot H^s(\T)):X(0) = DX[0]= 0 \text{ and }\updl X = 0\big\}, \end{split}\label{X1a}\end{align}

%\noiendowed with the norm\begin{align}\|X\|_{\dot \cX^{s, \g}_k([0, T]\times \T)} 
%=  \|X\|_{C^\g_{2, T}\Lip_k(\dot H^s(\T))} + \|DX\|_{C^\g_{2, T}
%\Lip_{k-1}(\dot H^s(\T);\cL_1(\dot H^s(\T)))}
%\label{X2a}
%\end{align}\noifor  $k \ge 2$.
%For  $k = 1$, 
% we set 
%$ \dot \cX^{s, \g}_1 ([0, T]\times \T) =  C^\g_{2, T} \Lip_1(\dot H^s(\T))$.
We may simply write $\cX^{s, \g}_2 ([0, T])$ or $\cX^{s, \g}_2(T)$, 
where there is no confusion about 
an underlying spatial domain.
We write 
$X \in \cX^{s, \g}_2 (\R_+)
= \cX^{s, \g}_2 (\R_+\times \T)$ 
if  $X \in \cX^{s, \g}_2(T)$ for any $T > 0$
(but $\sup_{T > 0 } \| X\|_{\cX^{s, \g}_2(T)}$ may be infinite).

In the sequel,  we also set the space
$\cX^{s, s_0, \g}_{2}([0, T]\times\T_\ld)$ 
of drivers (for establishing nonlinear smoothing) by 
setting 
\begin{align}
& \cX^{s, s_0, \g}_2 ([0, T]\times \T_\ld) 
:= C^\g_{2, T} \Lip_2(\H^s; H^{s_0}).
%& \quad = 
%\big\{X \in C^\g_{2, T} \Lip_k(H^s(\M); H^{s_0}(\M)):
%X(0) = DX[0]= 0 \text{ and }\updl X = 0\big\}, 
%& \quad = 
%\big\{X \in C^\g_{2, T} \Lip_k^2(H^s(\M); H^{s_0}(\M)):
%X(0) = DX[0]= 0 \text{ and }\updl X = 0\big\}, 
\label{X1x}
\end{align}

%\noi
%Similarly, 
%we set 
%\begin{align*}
%& \dot \cX^{s, s_0, \g}_2 ([0, T]\times \T) 
%= C^\g_{2, T} \Lip_2(\dot H^s(\T); \dot H^{s_0}(\T)).
%\end{align*}
 while 
$\cY^{s, s_0, \g}_{2}([0, T]\times\T_\ld)$ (for establishing persistency of regularity) by 
\begin{align}
\begin{split}
& \cY^{s, s_0, \g}_2 ([0, T]\times \T_\ld) := 
  C^\g_{2, T} \Lip_2(\H^s, \H^{s_0};H^{s_0}).
\end{split}
\label{X2c}
\end{align}

For $0<\s<1,$ we often use short-hand notations such as
$\CC^\s_T H^s_x$ to denote  $\CC^\s\big([0, T]; H^s(\T_\ld))$,  etc, 
 when there is no ambiguity. Moreover, when considering  $\ld=1$, in all the spaces we have defined we simply drop the dependence of $\ld$. 

%\noi
%Similarly, we set 
%\begin{align}
%\begin{split}
%& \dot \cY^{s, s_0, \g}_2 ([0, T]\times \T) = 
%  C^\g_{2, T} \Lip_2(\dot H^s(\T)\times \dot H^s(\T), \dot H^{s_0}(\T);\dot H^{s_0}(\T)).
%\end{split}
%\label{X2d}
%\end{align}

%\medskip

%We often use short-hand notations such as
%$C^\g_T H^s_x  = C^\g\big([0, T]; H^s(\T))$, 
% when there is no ambiguity.

%Let $f_1, f_2  \in H^{s}(\T)$. 
\subsection{Sewing lemma and Young integrals}
\label{SUBSEC:Y2}
\noi The main goal of this subsection is to provide criteria for defining the integrals in the right hand side of \eqref{mild2bis}, following the approach of \cite{CG1, CGLLO, GLO}. 
To this aim, for $f_1,f_2$ functions on $\T$, we define the drivers 
\begin{equation}
\label{driver1}
\begin{split}
X^1_{t,r}(f_1,f_2)
&=\frac 12\displaystyle\int_r ^t \uw(t')^{-1}\mathcal{N}(\uwa(t') f_1, \uwa(t') f_2)dt',\\
X^2_{t,r}(f_1,f_2)
&=\displaystyle\int_r ^t \uwa(t')^{-1}\mathcal{N}(U^{w}(t') f_1, \uwa(t') f_2)dt',
\end{split}
\end{equation}
%where 
%\begin{align}
%\label{f_moda}
 %\F\left( \uwa (t) f \right)(n)  = e^{i\al n^3 w(t)} \ft f(n) \\
%\F \left( \uw (t) f \right)(n)  = e^{i n^3 w(t)} \ft f(n)
%\label{fmod}
%\end{align}
where  $\mathcal{N}(u,v) = \dx (uv)$ and $U^{w_\alpha}$ is defined as in \eqref{semigroup_a}.
By taking the Fourier transform in \eqref{driver1} we have
\begin{align}
\label{F_X}
2\F ( X^{1}_{t,r}(f_1,f_2)) (n)= in\sum_{ \substack{n_1, n_2 \in \Z\\n = n_1+n_2}}\Phi^{w}_{t,r}  (\Xi^{(1)}(\bar n))  \ft f_1(n_1)  \ft f_2 (n_2),
\end{align}
\begin{align}
\label{F_X2}
\F ( X^{2}_{t,r}(f_1,f_2)) (n)= in\sum_{ \substack{n_1, n_2 \in \Z\\n = n_1+n_2}}\Phi^{w}_{t,r}  (\Xi^{(2)}(\bar n))  \ft f_1(n_1)  \ft f_2 (n_2),
\end{align}
where $\Phi^{w}_{t,r}$ and $\Xi^{(j)}$ are defined as in \eqref{rho2}, \eqref{res2intro}.
%while 
%\begin{align}
%\label{mod1}
%\Xi^{(1)}_{\al - \KDV}(\bar n)=\Xi^{(1)}_{\al - \KDV}(n,n_1,n_2)= \al n_1^3 + \al n_2^3 - n^3
%\end{align}
%\begin{align}
%\label{mod2}
%\Xi^{(2)}_{\al - \KDV}(\bar n)=\Xi^{(2)}_{\al - \KDV}(n,n_1,n_2)= \al n^3 + \al n_2^3 - n_1^3
%\end{align}
\smallskip

In view of Lemma \ref{LEM:sew} (and in particular  Proposition \ref{PROP:young1}), we first recall  the following result (see e.g.,  \cite[Lemma 3.1]{CGLLO}) which gives sufficient criteria to check the regularity of the drivers $X^j$ defined in \eqref{driver1}. 
\begin{lemma}\label{LEM:OBS1}

Let $X^{j}$, $j=1,2,$ be a driver as in \eqref{driver1}.
Then, given  $s, s_0 \in \R$, $\frac 12 < \g < 1$,  and $T \ge 1$
such that $s_0 > s$, 
the following statements hold\textup{:}

\smallskip
\begin{itemize}
\item[(i)] 
If we have 
\begin{align*}
\|X^{j}_{t,r}\|_{\cL_2(H^s(\T))}
\les 
\|\Phi^w\|_{\W^{\rho,\g}_T}
|t-r|^{\g}
\end{align*}

\noi
for any $0 \le r < t \le T$, 
then
 $X^j$ belongs to $\cX^{s, \g}_2([0, T]\times \T)$
 defined in 
 \eqref{X1}
with the bound 
\begin{align*}
\|X^j\|_{\cX^{s, \g}_2([0, T]\times \T)} \les \|\Phi^w\|_{\W^{\rho,\g}_T}.
\end{align*}

\smallskip
\item[(ii)] \textup{(persistence of regularity).}
Suppose that $X$ can be written as  $X^{j} = X^{j,1}+X^{j,2}$
such that  for $j,k= 1,2$, we have 
\begin{align}
\begin{split}
X^{j,k} &\in \cL_{2, k}^{s, s_0}\\ :\!&=
\cL_2\Big(\big(\bigotimes_{i  = 1}^{k-1}H^{s}(\T)\big) \otimes
H^{s_0}(\T) 
\otimes \big(\bigotimes_{i  = k+1}^{2}H^{s}(\T)\big); H^{s_0}(\T)\Big)
\end{split}
\label{X5}
\end{align}

\noi
with the bound
\begin{align*}
\|X^{j,k}_{t,r}\|_{\cL_{2, j}^{s, s_0}}
\les 
\|\Phi^w\|_{\W^{\rho,\g}_T}
|t-r|^{\g}.
\end{align*}

\noi
Then, 
 $X^j$ belongs to $ \cY^{s, s_0, \g}_2([0, T]\times \T)$
 defined  in \eqref{X2c}
 with the bound 
\begin{align*}
\|X^j\|_{\cY^{s, s_0, \g}_2([0, T]\times \T)} \les \|\Phi^w\|_{\W^{\rho,\g}_T}.
\end{align*}

\smallskip
\item[(iii)] \textup{(nonlinear smoothing).}
If we have 
\begin{align*}
\|X^j_{t,r}\|_{\cL_2(\H^{s}(\T); H^{s_0}(\T))}
\les 
\|\Phi^w\|_{\W^{\rho,\g}_T}
|t-r|^{\g}
\end{align*}

\noi
for any $0 \le r < t \le T$, 
then
 $X^j$ belongs to $\cX^{s, s_0, \g}_2([0, T]\times \T)$
 defined in 
 \eqref{X1x}
with the bound 
\begin{align*}
\|X^j\|_{\cX^{s, s_0, \g}_2([0, T]\times \T)} \les \|\Phi^w\|_{\W^{\rho,\g}_T}.
\end{align*}

\smallskip
\item[(iv)] \textup{(convergence).}
Given $N \in \N$ and $j=1,2$, 
define
 the truncated drivers $X^{j,N}$ by 
\begin{equation}\label{trunc}
\begin{split}
X^{1,N}_{t,r}(f_1,f_2): = \frac{1}{2}\int_r^t \uw(t')^{-1} \P_N \NN(\P_N \uwa(t') f_1, \P_N \uwa(t') f_2) dt',\\
X^{2,N}_{t,r}(f_1,f_2) := \int_r^t \uwa(t')^{-1} \P_N \NN(\P_N \uw(t') f_1, \P_N \uwa(t') f_2) dt'.
\end{split}
\end{equation}
\noi
If we have
\begin{align*}
\|X^{j,N}_{t, r} - X^{j}_{t,r}\|_{\cL_2(H^s(\T))}
\les 
o(1)|t-r|^{\g},
\end{align*}

\noi
as $N\to \infty$, 
uniformly in  $0 \le r < t \le T$, 
then, 
 $X^{j,N}$
 converges to $X^{j}$ in $\cX^{s, \g}_2([0, T]\times \T)$.

\end{itemize}

\end{lemma}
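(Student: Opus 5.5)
The plan is to reduce each part to the abstract properties of the drivers defined in \eqref{driver1}. These properties come from the multilinear structure together with the Chen relation $\updl X^j=0$, which follows from additivity of the time integral in $(t,r)$. The analytic input is the assumed bound on the bilinear operators $X^j_{t,r}$, which we then lift to the nonlinear maps $x\mapsto X^j_{t,r}(x,x)$. Throughout, we identify the driver acting on a pair $(f_1,f_2)\in\H^s$ with the quadratic map $x=(u,v)\mapsto X^1_{t,r}(v,v)$, respectively $X^2_{t,r}(u,v)$, as dictated by \eqref{mild2bis}.

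For part (i), write $X_{t,r}(x)=B_{t,r}(x,x)$ with $B_{t,r}=X^j_{t,r}$ bilinear. Then $X_{t,r}(x)-X_{t,r}(y)=B_{t,r}(x-y,x)+B_{t,r}(y,x-y)$. The operator-norm hypothesis therefore gives
\[
\|X_{t,r}(x)-X_{t,r}(y)\|_{H^s}\les \|\Phi^w\|_{\W^{\rho,\g}_T}|t-r|^\g\|x-y\|_{\H^s}(\|x\|_{\H^s}+\|y\|_{\H^s}),
\]
which is the $\Lip_2$ bound \eqref{Ho3}. Next, $DX_{t,r}[x]h=B_{t,r}(h,x)+B_{t,r}(x,h)$ is linear in $x$, so $\|DX_{t,r}[x]-DX_{t,r}[y]\|_{\cL_1}\le 2\|B_{t,r}\|\|x-y\|$. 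This gives the $C^\g_{2,T}\Lip(\H^s;\cL_1)$ bound. The normalisations $X(0)=0$ and $DX[0]=0$ are immediate from bilinearity. Finally, $\updl X=0$ holds because $X_{t_1,t_3}=X_{t_1,t_2}+X_{t_2,t_3}$ by splitting $\int_{t_3}^{t_1}=\int_{t_2}^{t_1}+\int_{t_3}^{t_2}$ in \eqref{driver1}. All estimates are first made on smooth $f_i$ (finite Fourier sums) and then extended by density.

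Part (iii) follows by the same difference identity, with the target norm $H^{s_0}$ in place of $H^s$, giving \eqref{X1x}.

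For part (ii) we again use the identity, but we distribute the $H^{s_0}$ norm. We write $X_{t,r}(x)-X_{t,r}(y)=B(x-y,x)+B(y,x-y)$ and split $B=X^{j,1}+X^{j,2}$. In $X^{j,1}$ the first slot carries $H^{s_0}$ and the second $H^s$; in $X^{j,2}$ it is the reverse. Hence each of the four resulting terms is bounded either by $\|x-y\|_{\H^{s_0}}\|x\|_{\H^s}$ or by $\|x-y\|_{\H^s}\|x\|_{\H^{s_0}}$, up to interchanging $x$ and $y$. Both quantities are dominated by $G_{s,s_0}(x,y)$, which gives \eqref{Ho4}.

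For part (iv), the difference $X^{j,N}-X^j$ is again a bilinear driver satisfying the Chen relation. Applying the argument of (i) to it yields $\|X^{j,N}-X^j\|_{\cX^{s,\g}_2}\les o(1)$. No step here is a genuine obstacle; the only point needing care is the bookkeeping in (ii), namely checking that every cross term is controlled by one of the two summands of $G_{s,s_0}$.
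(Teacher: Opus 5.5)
Your proposal is correct and takes essentially the same route as the paper. The paper gives no proof of its own and instead recalls the result from \cite[Lemma 3.1]{CGLLO}, whose argument is this multilinearity one: the identity $B(x,x)-B(y,y)=B(x-y,x)+B(y,x-y)$ gives the Lipschitz bounds, $DX[x]h=B(h,x)+B(x,h)$ gives the derivative bound, and additivity of the time integral gives $\updl X=0$. Your lifting of $X^1,X^2$ to quadratic maps on $\H^s$, and your check that each of the four cross terms in (ii) is controlled by one of the two summands of $G_{s,s_0}$, are both correct.
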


\smallskip

\remark \label{samedriver} \rm Although Lemma \ref{LEM:OBS1} is stated for the drivers $X^{j}$ in \eqref{driver1}, an analogous result holds for the drivers $X^{j,\ld}$ and $Y^{j,\ld}$ defined respectively in \eqref{Xld1} and \eqref{X1d2}. Furthermore, we refer to \cite[Subsections 2.2, 3.1]{CGLLO} for more general criteria addressing the regularity of $k$-linear drivers.
\smallskip

Next, we carry on by  recalling the following version of the sewing lemma (cf.  
\cite{GT10} and \cite[Lemma 3.3]{CGLLO}).  
See also 
\cite[Proposition 2.3, Corollary 2.4, Corollary 2.5]{GT10} and \cite[Lemma 4.2]{FH20}.

\begin{lemma}[sewing lemma]
\label{LEM:sew}

Let $V$ be a Banach space and 
fix $T>0$. 
Then,  there exists a unique linear map \textup{(}called the sewing map\textup{)}
$\Lambda:C^{1+}_{3,T} V \cap 
\Ker \updl|_{C_{3,T}V}
\to C^{1+}_{2,T}V$ such that 

\smallskip
\begin{enumerate}
\item[(i)] 
We have 
$\updl \Lambda h = h$
for each  $h\in C_{3,T}V\cap \Ker  \updl|_{C_{3,T} V}$.

\smallskip

\item[(ii)]
 For each $\z >1$,
the sewing map $\Lambda$ is continuous
from $C^\z _{3,T}V\cap 
\Ker  \updl|_{C_{3,T} V}$ to 
$C^\z _{2,T}V$ such that 
\begin{align}
\|\Lambda h \|_{C^\z _{2,T}V} 
\le \frac{1}{2^\z - 2}  \| h \|_{C^\z _{3,T}V} 
\label{sew1}
\end{align}

\noi
for any $h\in C^\z _{3,T}V$.

\smallskip
\item[(iii)] 
Given any  $g\in C_{2,T}V$ 
with 
$\updl g\in C^\z _{3,T}V$, 
 there exists  unique
$f\in C([0, T];V)$  \textup{(}modulo an additive  constant\textup{)} 
such that 
$\updl f = (\Id - \Lambda \updl)g$. 
In addition, 
we have 
\begin{align}
(\updl f)_{t,r} = \lim_{|\Pi([r,t])|\to 0} 
\sum_{j=0}^n g_{t_j,t_{j+1}}
\label{sew2}
\end{align}

\noi
 for any $(t,r)\in \Dl_{2,T}$,
 where 
 the limit is over any partition
 $\Pi([r,t])$  
 of  $[r,t]$\textup{:}
\[\Pi ([r,t]) = \{r = t_n < \dots < t_1 <  t_0 = t\}\]
whose mesh size 
$|\Pi([r,t])| = \sup_{j} |t_j-t_{j+1}|$ 
tends to $0$.
\end{enumerate}

\end{lemma}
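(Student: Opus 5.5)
The plan is the classical dyadic sewing argument of Gubinelli and Feyel--de La Pradelle. For (i)--(ii), fix $h\in C^\z_{3,T}V\cap\Ker\updl$ with $\z>1$. Since the complex $C_{2,T}V\to C_{3,T}V\to C_{4,T}V$ is exact, there is some $B\in C_{2,T}V$ with $\updl B=h$. The task is to correct $B$ by an element of $\Ker\updl|_{C_{2,T}V}=\updl C_{1,T}V$ so that it becomes $\z$-H\"older.

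For $(t,r)\in\Dl_{2,T}$ and the dyadic partitions $\Pi_n$ of $[r,t]$ into $2^n$ equal pieces, consider
\[
B_{t,r}-\sum_{[u,v]\in\Pi_n}B_{v,u}.
\]
Passing from $\Pi_n$ to $\Pi_{n+1}$, each interval $[u,v]$ with midpoint $m$ contributes exactly $h_{v,m,u}$, by the definition \eqref{dl1} of $\updl$. This increment is bounded in norm by $\|h\|_{C^\z_{3,T}V}\,(2^{-n-1}|t-r|)^{\z}$, using the infimum over $\al$ in \eqref{Ho2} with both gaps equal. Summing over the $2^n$ intervals gives $2^{n}2^{-(n+1)\z}|t-r|^\z\|h\|$. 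Summing over $n\ge0$ gives the geometric series $\sum_n 2^{-\z}2^{n(1-\z)}=\frac{1}{2^\z-2}$, which converges because $\z>1$. So the limit defines $\Lambda h:=\lim_n(\text{the above})$ with exactly the bound \eqref{sew1}.

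Next I must show $\updl\Lambda h=h$. The delicate point is that dyadic partitions of $[r,t]$ do not refine those of $[r,u]$ and $[u,t]$. One option is to check first, for $u$ a dyadic point of $[r,t]$, that the partition sums split additively. Then extend to general $u$ by showing the limit is independent of the partition used, as long as the mesh tends to $0$. For that I will use the standard point-removal argument: from any partition, remove a point $t_j$ with $|t_{j-1}-t_{j+1}|\le \frac{2}{k-1}|t-r|$, at a cost of $\|h\|(2|t-r|/(k-1))^\z$. Summing over $k$ converges since $\z>1$, which gives a uniform bound. Together with continuity of $B$ this yields independence of the partition, and hence $\updl\Lambda h=h$.

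Uniqueness is straightforward. If $\Lambda_1,\Lambda_2$ both work, then $\updl(\Lambda_1h-\Lambda_2h)=0$, so $\Lambda_1h-\Lambda_2h=\updl f$ with $|f_t-f_r|\les|t-r|^\z$ and $\z>1$. Hence $f$ is constant and the difference vanishes. Linearity follows from uniqueness.

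For (iii), given $g$ with $\updl g\in C^\z_{3,T}V$, note $\updl(\updl g)=0$. Hence $\Lambda\updl g$ is defined, and $\updl\bigl((\Id-\Lambda\updl)g\bigr)=\updl g-\updl g=0$. By exactness, $(\Id-\Lambda\updl)g=\updl f$ for some $f\in C_{1,T}V$, unique up to constants. Summing over a partition $\Pi([r,t])$ and telescoping gives
\[
(\updl f)_{t,r}=\sum_j g_{t_j,t_{j+1}}-\sum_j(\Lambda\updl g)_{t_j,t_{j+1}}.
\]
The last sum is bounded by $\|\Lambda\updl g\|_{C^\z_{2,T}V}\sum_j|t_j-t_{j+1}|^\z\le C|\Pi|^{\z-1}|t-r|\to0$, which gives \eqref{sew2}. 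Continuity of $f$ follows from continuity of $g$ and $\Lambda\updl g$.

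I expect the main obstacle to be the partition-independence and additivity step for non-dyadic splitting points, which is needed for $\updl\Lambda h=h$. I will handle it with the point-removal estimate rather than the dyadic argument alone.
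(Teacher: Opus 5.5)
The paper does not prove this lemma itself; it quotes it from \cite{GT10} and \cite[Lemma 3.3]{CGLLO}. Your argument is the classical proof: dyadic sewing (which is what yields the constant $\frac{1}{2^\z-2}$), Young's point-removal bound for partition independence, uniqueness from increments of order $|t-r|^\z$ with $\z>1$, and (iii) by exactness plus telescoping. It is correct.

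Two small clarifications are worth making.

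First, the preliminary ``dyadic $u$'' step is redundant and not actually easier. For a dyadic point $u\neq\frac{r+t}{2}$, the partitions $\Pi_n([r,t])$ restricted to $[r,u]$ and $[u,t]$ are uniform but not both dyadic, so you need partition independence there anyway. Once you have it, additivity at an arbitrary $u$ follows directly by using partitions of $[r,t]$ that contain $u$.

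Second, partition independence does not come from continuity of $B$. It comes from applying your maximal bound
\[
\Big\|B_{v,u}-\sum_{[a,b]\in\Pi''}B_{b,a}\Big\|\le C_\z\|h\|_{C^\z_{3,T}V}\,|v-u|^\z
\]
on each cell $[u,v]$ of $\Pi$, relative to the common refinement $\Pi\cup\Pi'$. This gives a discrepancy $\les\big(|\Pi|^{\z-1}+|\Pi'|^{\z-1}\big)|t-r|$. Continuity of $B$ is needed elsewhere, namely to get $\Lambda h\in C_{2,T}V$, via uniform convergence of the dyadic sums on $\Dl_{2,T}$.
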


Next, let us describe briefly the construction of the nonlinear Young integral associated to a driver  $X$. See also \cite[Section 2]{GLO}, and \cite{G1,Hu} for a more general setting. Given $T>0$ and $X\in \cX^{s, \g}_2([0, T]\times \T_\ld)$, where $\cX^{s, \g}_2([0, T]\times \T_\ld)$ is defined in \eqref{X1}.
%Assume in addition that the driver $X_{t,r}$ is  a (bilinear) integral operator over the integral $[r,t]$, as \eqref{driver1} or \eqref{Xld1}.  
Given $\vec{\bf z}:=(\uu,\vv)\in \CC^{\s}([0,T];\H^s(\T_\ld))$, for some $0<\s<1$,  our task is to define the nonlinear Young integral $\I^{X}(\vec{\bf z})$ as the unique function on $[0,T]$ whose increment satisfies
\begin{align}\label{YI}
(\updl  \I^{X}(\vec{\bf z}))_{t,r}=X_{t,r}(\vec{\bf z}(r))+R_{t,r},\quad 0\le r< t\le T,
\end{align}
for some sufficiently regular two-parameter remainder $R=R^{X,\vec{\bf z}}$. To be more precise, we look for $R$ sufficiently regular allowing us to define $\I^{X}(\vec{\bf z})$ as the unique limit of Riemann-Stieltjes type sums.  If we set $\Theta$ on $\Delta_{2,T}$ as 
\begin{align}\label{13T}
    \Theta_{t,r}:=X_{t,r}(\vec{\bf z}(r)),
\end{align}
by applying the co-boundary operator $\updl$ to \eqref{YI} with \eqref{13T}, we infer that any error $R$ (if it exists) must satisfy
\begin{align}\label{R13}
(\updl R)_{t_1,t_2,t_3}=-(\updl \Theta)_{t_1,t_2,t_3}=X_{t_1,t_2}(\vec{\bf z}(t_2))-X_{t_1,t_2}(\vec{\bf z}(t_3)).
\end{align}
Moreover, by combining \eqref{R13} with the regularity assumptions on $X$ and $\vec{\bf z}$, we obtain that $\updl R\in C^{\s+\g}_{3,T}H^s(\T_\ld)$; see e.g., \cite[Lemma 3.4 (i)]{CGLLO}. As a result, if $\s+\g>1$, the sewing lemma (Lemma \ref{LEM:sew}) allows us to define $R$ by the relation
\begin{align}\label{Rr}
R:=-\Lambda \updl \Theta\in C^{\s+\g}_{2,T}H^s(\T_\ld)
\end{align}
and, the nonlinear Young integral $ \I^{X}(\vec{\bf z})$ of $\vec{\bf z}$ as the unique function in $C^\g([0,T];H^s(\T_\ld))$ with $\I^{X}(\vec{\bf z})(0)=0$ whose increment is given by 
$$(\updl  \I^{X}(\vec{\bf z}))=(\text{Id}-\Lambda\updl)\Theta. $$
In particular, in view of \eqref{Rr}, \eqref{YI}, \eqref{13T}, \eqref{Ho2} and $\s+\g>1$,
\begin{align*}
\I^{X}(\vec{\bf z})(t)&=\lim_{|\Pi([0,t])|\to 0} \sum_{j=0}^n \Theta_{t_j,t_{j+1}}\\
&=\lim_{|\Pi([0,t])|\to 0} \sum_{j=0}^n X_{t_j,t_{j+1}}(\vec{\bf z}(t_{j+1}))+\lim_{|\Pi([0,t])|\to 0} \sum_{j=0}^n R_{t_j,t_{j+1}}\\
&= \lim_{|\Pi([0,t])|\to 0} \sum_{j=0}^n X_{t_j,t_{j+1}}(\vec{\bf z}(t_{j+1})),\quad t\in [0,T],
\end{align*}
where the above limits are takes in the sense of Lemma \ref{LEM:sew}-(iii).
See Proposition \ref{PROP:young1} below for the precise statement.
We also refer to \cite[Subsection 1.6]{GLLO} for a deeper introduction to the topic.
% and $\vec{X}=(X^1,X^2)$, we define 
%\begin{align}\label{vec_I}
%\I^{\vec X}(\vec{\bf w})(t)=(\I^{X^{1}}(\vec{\bf w})(t),\,\I^{X^{2}}(\vec{\bf w})(t)).
%\end{align}
%We also refer the reader to  \cite[Lemma 3.4, Proposition 3.7]{CGLLO} for the proof of the following result.
\begin{proposition}\label{PROP:young1}\textnormal{(\cite[Lemma 3.4, Proposition 3.7]{CGLLO})}
Given   $s \in \R$, $0 < \g < 1$, $T >0$, and $\ld\ge 1$. 
Let   $X \in \cX^{s, \g}_2([0, T]\times \T_\ld)$, 
where $ \cX^{s, \g}_2([0, T]\times \T_\ld)$ is  defined  in \eqref{X1}, and 
$0 < \s < 1$
such that 
 $\z  = \s + \g > 1$.

\smallskip

\noi
\textup{(i)}
Let  
  $\lvec{\bf z} \in \CC^{\s}([0,T];\H^s(\T_\ld))$.
 Then, the nonlinear Young integral $\I^{X}(\vec{\bf z})$
 with the driver $X$ exists
 as the unique function
$\I^{X}(\vec{\bf z})\in C^\g([0,T]; H^s(\T_\ld))$
 with $\I^{X}(\vec{\bf z})(0) = 0$
 such that 
 \begin{align}
\updl \I^{X}(\vec{\bf z}) = (\Id - \Lambda \updl)\Theta, 
\label{Y1}
\end{align}

\noi
where $\Ta$ is given by   
\begin{align*}
\Theta_{t,r} = X_{t,r}(\vec{\bf z}(r)), \quad (t,r)\in \Dl_{2,T}.
\end{align*}

\noi
Moreover, we have
\begin{align}
\label{Y2}
\|\updl \I^{X}(\vec{\bf z}) - \Theta\|_{C^\z_{2,T} H^s_x}
\le
C_1(\vec{\bf z})\|X\|_{\cX^{s, \g}_2(T)},\\
\label{Y3}
\|\I^{X}(\vec{\bf z})\|_{C^\g_T  H^s_x} 
\le
C_2(\vec{\bf z})\|X\|_{\cX^{s, \g}_2(T)}, 
\end{align}

\noi
where $C_1(\uu)$ and $C_2(\uu)$ are given by
\begin{align}
\begin{split}
C_1(\vec{\bf z})
%& = 
%C_1(\|f(0)\|_{ V}, \|f\|_{C^\al_T  V}
%, T , k, \al, \z) \\
&= 
\frac{1}{2^\z-2}
\big(1+ 2\|\vec{\bf z}\|_{ L^\infty_T \H^s_x}\big) \|\vec{\bf z}\|_{C^\s_T  \H^s_x}, \\
C_2 (\vec{\bf z})
%& = 
%C_2(\|f(0)\|_{ V}, \|f\|_{C^\al_T  V}
%, T , k, \al, \z) \\
&= (1 \vee T)^\s C_1(\vec{\bf z}) + 
\big(1 + \|\vec{\bf z}\|_{ L^\infty_T \H^s_x} \big)
\|\vec{\bf z}\|_{L^\infty_T \H^s_x}.
\end{split}
\label{Ja3x}
\end{align}

Here, $a \vee  b \coloneqq \max(a,b)$. Finally, for $(t,r) \in \Dl_{2,T} $, we have,
\begin{align*}
(\updl \I^{X}(\vec{\bf z}))_{t,r} = \lim_{|\Pi([r,t])|\to 0} 
\sum^n_{j=0} \Theta_{t_j,t_{j+1}}.
\end{align*}

\smallskip

\smallskip

\item[(ii)]
\textup{(persistence of regularity).}
Suppose that $s_0>s$ and
 $X\in \cY^{s, s_0, \g}_2 ([0, T]\times \T_\ld)$, 
 where $\cY^{s, s_0, \g}_2 ([0, T]\times \T_\ld)$ is as in \eqref{X2c}
 and that 
   $\vec{\bf z}\in C^{\s}([0,T];\H^{s_0}(\T_\ld))$.
Then, we have 
\begin{align}
\Theta\in C^\g_{2,T}H^{s_0}_x
\qquad \text{and} 
\qquad 
\updl \Theta \in C^\z_{3,T}H^{s_0}_x.
\label{Jaa1}
\end{align}

\noi
Furthermore, 
we have 
$\I^{X}(\vec{\bf z})\in C^\g([0,T];H^{s_0}(\T))$, 
satisfying 
\begin{align}
\|\updl \I^{X}(\vec{\bf z}) - \Theta\|_{C^\z_{2,T}  H^{s_0}_x}
& \leq 
C_3(\vec{\bf z})
\|X\|_{\cY^{s, s_0, \g}_2 (T)},
\label{Jaa2}
\\
\|\I\|_{C^\g_T  H^{s_0}_x} 
& \leq
C_4(\vec{\bf z})
\|X\|_{\cY^{s, s_0, \g}_2 (T)}, 
\label{Jaa3}
\end{align}

\noi
where $C_3(\vec{\bf z})$ and $C_4(\vec{\bf z})$ are given by 
\begin{align}
\begin{split}
C_3(\vec{\bf z})
&= 
\frac{1}{2^\z-2}\Big(
\big(1+ 2\|\vec{\bf z}\|_{L^\infty_T \H^s_x} \big) \|\vec{\bf z}\|_{C^\s_T  \H^{s_0}_x} +  \|\vec{\bf z}\|_{C^\s_T  \H^s_x}
\big(1+ 2\|\vec{\bf z}\|_{L^\infty_T \H^{s_0}_x} \big)\Big),\\
C_4 (\vec{\bf z})
&= (1 \vee T)^\s C_3 + 
2\big(1 + \|\vec{\bf z}\|_{ L^\infty_T \H^s_x}\big) 
\big(1 + \|\vec{\bf z}\|_{ L^\infty_T \H^{s_0}_x} \big).
\end{split}
\label{Jaa4}
\end{align}

\noi
\textup{(iii)}
Suppose that a sequence $\{X^N\}_{N \in \N}\subset 
 \cX^{s, \g}_2([0, T]\times \T_\ld)$
 converges to $X$ in 
 $ \cX^{s, \g}_2([0, T]\times \T_\ld)$ as $N \to \infty$.
Then, given
any  $\vec{\bf z} \in \CC^{\s}([0,T];\H^s(\T_\ld))$, 
 the nonlinear Young integral $\I^{X^N}(\vec{\bf z} )$
 of~$\vec{\bf z} $ 
 with the driver $X^N$ 
 converges to 
 the nonlinear Young integral $\I^X(\vec{\bf z} )$
 of~$\vec{\bf z} $ 
 with the driver $X$, 
 in $C^{\g}([0,T]; H^{s}(\T_\ld))$
as $N \to \infty$.
Moreover, 
given any $r > 0$, 
the rate of convergence is
uniform in $\vec{\bf z} \in B_r$ 
where
$B_r$ denotes the ball 
in $\CC^{\s}([0,T];\H^s(\T_\ld))$
of radius $r > 0$ centered at the origin.
\end{proposition}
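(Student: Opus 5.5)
The plan is to follow the standard construction recalled above, as in \cite[Lemma 3.4, Proposition 3.7]{CGLLO}, adapted to a driver acting on the pair $\vec{\bf z}=(\uu,\vv)\in\H^s$.

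\textbf{Part (i).} Set $\Theta_{t,r}=X_{t,r}(\vec{\bf z}(r))$. The first step is to check that $\Theta\in C^\g_{2,T}H^s$. Since $X(0)=0$ and $X_{t,r}\in\Lip_2$, we get
\[
\|\Theta_{t,r}\|_{H^s}\le \|X\|_{\cX^{s,\g}_2}|t-r|^\g\,(1+\|\vec{\bf z}\|_{L^\infty_T\H^s})\|\vec{\bf z}\|_{L^\infty_T\H^s}.
\]
Next we compute the coboundary. Since $\updl X=0$, formula \eqref{R13} gives
\[
(\updl\Theta)_{t_1,t_2,t_3}=X_{t_1,t_2}(\vec{\bf z}(t_3))-X_{t_1,t_2}(\vec{\bf z}(t_2)),
\]
up to sign. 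The Lipschitz bound then yields
\[
\|(\updl\Theta)_{t_1,t_2,t_3}\|_{H^s}\le \|X\|\,|t_1-t_2|^\g\,|t_2-t_3|^\s\,(1+2\|\vec{\bf z}\|_{L^\infty\H^s})\|\vec{\bf z}\|_{C^\s\H^s}.
\]
Hence $\updl\Theta\in C^\z_{3,T}H^s\cap\Ker\updl$, where we use $\updl\circ\updl=0$ and $\z=\s+\g>1$. The sewing Lemma~\ref{LEM:sew}(ii),(iii) then produces a unique $\I^X(\vec{\bf z})$ with $\I^X(\vec{\bf z})(0)=0$ and $\updl\I^X(\vec{\bf z})=(\Id-\Lambda\updl)\Theta$. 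By \eqref{sew1} we obtain \eqref{Y2} with constant $C_1$. For \eqref{Y3}, write $\updl\I=\Theta-\Lambda\updl\Theta$ and bound $|t-r|^\z\le(1\vee T)^\s|t-r|^\g$. The Riemann sum representation is \eqref{sew2}.

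\textbf{Part (ii).} This runs the same argument in $H^{s_0}$. We use the bilinear structure of the $\Lip_2(\H^s,\H^{s_0};H^{s_0})$ norm: the factor $G_{s,s_0}(\vec{\bf z}(t_3),\vec{\bf z}(t_2))$ is bounded by
\[
|t_2-t_3|^\s\Big[\|\vec{\bf z}\|_{C^\s\H^{s_0}}(1+2\|\vec{\bf z}\|_{L^\infty\H^s})+\|\vec{\bf z}\|_{C^\s\H^s}(1+2\|\vec{\bf z}\|_{L^\infty\H^{s_0}})\Big].
\]
This gives \eqref{Jaa1}. Sewing in $V=H^{s_0}$ then gives \eqref{Jaa2} and \eqref{Jaa3} with $C_3,C_4$. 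The integral obtained in $H^{s_0}$ coincides with the one obtained in $H^s$, by uniqueness of the sewing map on the continuous embedding $H^{s_0}\subset H^s$.

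\textbf{Part (iii).} We use linearity of $\I^X$ in the driver: $\I^{X^N}-\I^X=\I^{X^N-X}$, since $\Theta$ is linear in $X$ and so is $\Lambda$. Applying \eqref{Y3} to the driver $X^N-X$ gives
\[
\|\I^{X^N}(\vec{\bf z})-\I^X(\vec{\bf z})\|_{C^\g H^s}\le C_2(\vec{\bf z})\,\|X^N-X\|_{\cX^{s,\g}_2}.
\]
Since $C_2$ is bounded on $B_r$, the convergence is uniform there.

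The only delicate point is keeping track of the constants through the sewing lemma. In particular, the $C^\z_{3,T}$ norm in \eqref{Ho2} takes an infimum over splittings of the exponent, so we must check that the product form $|t_1-t_2|^\g|t_2-t_3|^\s$ is admissible; it is, taking $\al=\g$ as the exponent on $|t_1-t_2|$. Beyond that, this is a direct transcription of the cited result.
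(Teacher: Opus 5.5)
Your proposal is correct and follows essentially the same argument as the cited result, matching the construction the paper sketches just before the statement. You bound $\Theta$ and $\updl\Theta=X_{t_1,t_2}(\vec{\bf z}(t_3))-X_{t_1,t_2}(\vec{\bf z}(t_2))$ via $\updl X=0$ and the $\Lip_2$ (resp.\ $G_{s,s_0}$) structure, apply the sewing lemma with the splitting $\alpha=\g$ in the $C^\z_{3,T}$ norm, and obtain (iii) from the linearity of $X\mapsto\I^X(\vec{\bf z})$ combined with \eqref{Y3}; the constants you track reproduce $C_1,\dots,C_4$.
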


%%%%%%%%%%%%%%%%%%%%%%%%%%%%%%%%%%%%%%%%%%%%%%%%%%%%%%%%%%%%%%%%%%%%%%%%%%%%%%%%%%%%%%%%%%
\subsection{Auxiliary estimates}
%We begin the appendix by recalling the following technical result whose proof can be found in \cite[Lemma 4.2]{GTV} and \cite[Lemma 4.1]{MWX}. 
 %\begin{lemma}\label{LEM:SUM}
%\textup{(i)}
%Let  $\al, \be \in \R$ satisfy
%\begin{align}
 %\al \ge \be \ge 0 \qquad \text{and}\qquad  \quad \al+ \be > 1.
 %\label{SUM1}
%\end{align}

%\noi
%\Then, we have
%\begin{align*}
%\int_{\xi = \xi_1 + \xi_2}\frac{d\xi_1}{\jb{\xi_1}^\al \jb{\xi_2}^\be}
%& \les \frac 1{\jb{\xi}^{ \be - \ld}}, \\
 %\sum_{n = n_1 + n_2} \frac{1}{\jb{n_1}^\al \jb{n_2}^\be}
%& \les \frac 1{\jb{n}^{ \be - \ld}}
%\end{align*}

%\noi
%for any $\xi \in \R$ and $n \in \Z$, 
%where $\ld = 
%\max( 1- \al, 0)$ when $\al\ne 1$ and $\ld = \eps$ when $\al = 1$ for any $\eps > 0$.
%In particular, if \eqref{SUM1} holds, then we have 
%\[
%\sup_{\xi  \in \R} \int_{\xi = \xi_1 + \xi_2}\frac{d\xi_1}{\jb{\xi_1}^\al \jb{\xi_2}^\be}
%+ 
%\sup_{n \in \Z} \sum_{n = n_1 + n_2} \frac{1}{\jb{n_1}^\al \jb{n_2}^\be}
%< \infty. \]
%\end{lemma}

In this subsection, we state some technical results that are widely used in the proof of the main bilinear estimates of Section \ref{sec4}, and of the commutator estimates of Section \ref{sec6}. 

 We recall that for $c$ as in \eqref{roots1}, we denote by $\nu_c$ its corresponding minimal type index; see Definition \ref{inf_index}. In the next lemma we provide lower bounds for  $|\Xi^{(1)}(\bar{n})|$ (respectively, $|\Xi^{(2)}(\bar{n})|$) assuming $n\neq 0$ (respectively, $n,n_1\neq 0$) and  $\nu_c\neq \infty$. 

\begin{lemma}\label{lem_res}
Let $\nu_c\neq \infty$,  $\lambda \ge 1$, $j=1,2$, and $n\in \Z^{*}_\ld$.
Assume that $(n_1,n_2)\in \Z_\lambda^2$ are such that  $n=n_1+n_2$. The following possibilities hold:
\begin{itemize}
\item[(i)]
If $|n| \sim |n_1| \sim |n_2|$ and $\min (|n_1-cn|,|n_1-c_*n|) \ges \lambda^{-1}$, then
\begin{equation}\label{eq_res_1}
|\Xi^{(j)}(\bar n)|  \ges \lambda^{-1}|n|^2.
\end{equation}
%If $|n| \sim |n_1| \sim |n_2|$ and $\min (|n-cn_1|,|n-c_*n_1|) \ges \lambda^{-1}$, then
%\begin{equation*}
%|\Xi^{(2)}(\bar n)|  \ges \lambda^{-1}|n|^2.
%\end{equation*}
\item[(ii)]
If $|n| \sim |n_1| \sim |n_2|$ % and \textcolor{red}{$\min (|n_1-cn|,|n_1-c_*n|) \ll \lambda^{-1}$} ,
 then
\begin{equation}\label{eq_res_2}
|\Xi^{(j)}(\bar n)| \ges_\varepsilon  \lambda^{-2-\nu_c-\varepsilon}|n|^{1-\nu_c-\varepsilon}.
\end{equation}

\item[(iii)]
If $\max(|n|,|n_1|,|n_2|) \gg \min(|n|,|n_1|,|n_2|)$, then
\begin{equation}\label{eq_res3}
|\Xi^{(1)}(\bar n)|  \ges |n| (\max(|n|,|n_1|,|n_2|))^2, \ \ \ \ |\Xi^{(2)}(\bar n)|  \ges |n_1| (\max(|n|,|n_1|,|n_2|))^2.
\end{equation}
\end{itemize}
\end{lemma}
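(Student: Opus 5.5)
Everything rests on the factorization \eqref{eq_factori1}, $\Xi^{(1)}(\bar n)=3\alpha n(n_1-cn)(n_1-c_*n)$, together with the relation \eqref{relphase}, which transfers any bound for $\Xi^{(1)}$ to $\Xi^{(2)}$ with the roles of $(n,n_1)$ replaced by $(-n_1,-n)$. Since all frequencies lie in $\Z_\ld$, we write $n=m/\ld$, $n_1=m_1/\ld$ with $m,m_1\in\Z$. Then
\[
\Xi^{(1)}=3\alpha\ld^{-3}m(m_1-cm)(m_1-c_*m).
\]
In the regime $|n|\sim|n_1|\sim|n_2|$ the symmetric expression $\Xi^{(2)}(\bar n)=-3\alpha n_1(n-dn_1)(n-d_*n_1)$ is handled the same way, using $d=1/c$, $d_*=1/c_*$ and \eqref{eq_index}. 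So it suffices to treat $j=1$.

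\emph{Step (i).} When $|n|\sim|n_1|$, the two roots are separated: $|c-c_*|=\sqrt{-3+12\alpha^{-1}}/3\neq0$, and $\alpha\ne4$ follows from $\nu_c<\infty$. Hence at most one of the factors $|n_1-cn|$, $|n_1-c_*n|$ can be small relative to $|n|$, and the other is $\gtrsim|n|$. Indeed, $|(n_1-cn)-(n_1-c_*n)|=|c-c_*||n|$. The product of the two factors is therefore $\gtrsim |n|\min(|n_1-cn|,|n_1-c_*n|)\gtrsim\ld^{-1}|n|$ by hypothesis. Multiplying by $|n|$ gives \eqref{eq_res_1}.

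\emph{Step (ii).} We again reduce to bounding $\min(|n_1-cn|,|n_1-c_*n|)$ from below. Fix $\nu$ with $\nu_c<\nu\le\nu_c+\eps$, so that $c$, and likewise $c_*=1-c$, is of type $(K,\nu)$. The Diophantine condition gives
\[
|m_1-cm|=|m|\,|c-m_1/m|\ge K|m|^{-1-\nu}.
\]
Converting back to $\Z_\ld$,
\[
|n_1-cn|=\ld^{-1}|m_1-cm|\ge K\ld^{-1}(\ld|n|)^{-1-\nu}=K\ld^{-2-\nu}|n|^{-1-\nu}.
\]
Combining with the other factor $\gtrsim|n|$ and the prefactor $|n|$ gives
\[
|\Xi^{(1)}|\gtrsim \ld^{-2-\nu}|n|^{1-\nu},
\]
which is \eqref{eq_res_2}, with the constant depending on $\eps$ through $K$. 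The only bookkeeping to check is that $\ld\ge1$ makes the exponent direction correct, and the case $n_1=cn$ exactly cannot occur since $c\notin\q$.

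\emph{Step (iii).} In the non-comparable case, either $|n|\ll|n_1|\sim|n_2|$ or $|n_1|$ (or $|n_2|$) $\ll|n|\sim$ the other. In each configuration, $|n_1-cn|$ and $|n_1-c_*n|$ are both $\gtrsim\max(|n|,|n_1|,|n_2|)$.
\begin{itemize}
\item If $|n|\ll|n_1|$, both factors are $\sim|n_1|$.
\item If $|n_1|\ll|n|$, both are $\sim|n|$, because $c,c_*\neq0$.
\item If $|n_2|\ll|n|$, then $n_1\approx n$ and both are $\sim|1-c||n|$ or $\sim|1-c_*||n|$, with $c,c_*\ne1$.
\end{itemize}
This gives the bound for $\Xi^{(1)}$. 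The bound for $\Xi^{(2)}$ follows by \eqref{relphase}, which swaps the roles of $n$ and $n_1$ and leaves the maximum unchanged.

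The main obstacle is the careful use of the Diophantine index in (ii), namely tracking the $\ld$-powers correctly. The remaining steps are elementary separation-of-roots arguments.
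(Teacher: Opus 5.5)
Your $j=1$ argument matches the paper's proof. The root separation $|(n_1-cn)-(n_1-c_*n)|=|c-c_*||n|$ gives $|\Xi^{(1)}(\bar n)|\gtrsim|n|^2\min(|n_1-cn|,|n_1-c_*n|)$. The Diophantine bound rescaled from $\Z$ to $\Z_\ld$ then gives (ii); passing from $\nu\in(\nu_c,\nu_c+\eps]$ to the stated exponents is fine because $(\ld|n|)^{\nu_c+\eps-\nu}\ge1$. Your case analysis correctly fills in (iii).

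The gap is in how you reduce (i) for $j=2$ in the comparable regime. The identity $\Xi^{(2)}(\bar n)=-3\alpha n_1(n-dn_1)(n-d_*n_1)$ is false. Expanding $n_1^3+\alpha(n-n_1)^3-\alpha n^3$ gives
\[
\Xi^{(2)}(\bar n)=-3\alpha n_1(n-cn_1)(n-c_*n_1)=(1-\alpha)n_1(n_1-dn)(n_1-d_*n),
\]
which agrees with \eqref{relphase}. Your formula would give $|n-dn_1|=|n_1-cn|/|c|$, which is why the hypothesis of (i) appeared to transfer. With the correct formula, the potentially small factors are $n-cn_1$ and $n-c_*n_1$, and $\min(|n_1-cn|,|n_1-c_*n|)\gtrsim\ld^{-1}$ says nothing about them.

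In fact, (i) for $j=2$ is false under the hypothesis as literally written. Take $\ld=1$, a Dirichlet approximant $|c-p/q|<q^{-2}$ with $q$ large, and set $(n,n_1)=(p,q)$. Then $|n|\sim|n_1|\sim|n_2|$. Both $|n_1-cn|\approx|1-c^2|q$ and $|n_1-c_*n|\approx|1-cc_*|q$ are large: $c\neq\pm1$ since $c\notin\mathbb{Q}$, and $cc_*=\frac{\alpha-1}{3\alpha}\neq1$. Yet $|\Xi^{(2)}(\bar n)|\lesssim q\cdot q^{-1}\cdot q=q\ll q^2\sim|n|^2$.

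The paper avoids this by deriving every $j=2$ bound from \eqref{relphase}, interchanging $n$ and $n_1$ in the hypothesis as well. For $j=2$, (i) then reads $\min(|n-cn_1|,|n-c_*n_1|)\gtrsim\ld^{-1}$, which is exactly the condition defining $B^{(2)}_1$ where the lemma is applied. You already use \eqref{relphase} for (iii). Use it in the comparable regime too, and state the interchanged hypothesis. Parts (ii) and (iii) for $j=2$ are unaffected: (ii) places no hypothesis on the factors, and $|n_1|\sim|n|$ lets you trade $|n_1|^{1-\nu}$ for $|n|^{1-\nu}$.
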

\begin{proof}
From \eqref{relphase}, we obtain the estimates for $|\Xi^{(2)}(\bar n)|$ from those for $|\Xi^{(1)}(\bar n)|$ by interchanging the roles of $n_1$ and $n$.
Thus, we focus only on those for $|\Xi^{(1)}(\bar n)|$.

By the triangle inequality, we have
\begin{equation}\label{t16}
\max( |n_1 -cn|, |n_1 - c_*n| ) \ge \frac{1}{2}(|n_1 -cn|+|n_1 - c_*n|) \ge \frac{|c-c_*|}{2}|n|.
\end{equation}
Then, \eqref{eq_factori1} and \eqref{t16} imply
\begin{align}
  |\Xi^{(1)}(\bar n)|& \sim |n|\max(|n_1 -c  n|, |n_1 - c_* n| )\min(|n_1 -c  n|, |n_1 - c_* n| )\notag\\
& \ges |n|^2\min(|n_1 -c  n|, |n_1 - c_* n| ).\label{eq_res_4}
\end{align}
Therefore, we obtain \eqref{eq_res_1} by the assumption  (i).

Next, by Definition \eqref{index} and \eqref{eq_index},  it follows that for any ${m},{m_1} \in \N$
 \begin{align*}
 |{m}|^2|{m_1}-c{m}| \ges_\varepsilon |{m}|^{1-\nu_c-\varepsilon}\quad \text{and}\quad  |{m}|^2|{m_1} - c_*{m}| \ges_\varepsilon |{m}|^{1-\nu_c-\varepsilon}, 
 \end{align*} 
which implies that for any $n,n_1\in \Z_\ld$
\begin{align*}
|n|^2|n_1-cn| \ges_\varepsilon \lambda^{-2-\nu_c-\varepsilon}|n|^{1-\nu_c-\varepsilon}\quad \text{and}\quad |n|^2|n_1 - c_*n| \ges_\varepsilon \lambda^{-2-\nu_c-\varepsilon}|n|^{1-\nu_c-\varepsilon}.
\end{align*} 
Therefore, we obtain \eqref{eq_res_2} from \eqref{eq_res_4}.
Finally, \eqref{eq_res3} easily follows from \eqref{eq_factori1}.
\end{proof}

%\begin{remark}\rm Note that, if $\nu_c\ge 1$, \eqref{eq_res_2} does not provide any useful lower bound on $\Xi^{(j)}$. In particular, 
%\end{remark}

A corollary of the above result is of a particular interest for its application to the proof of the commutator estimates in Proposition \ref{PROP:com1}. 

\begin{corollary}\label{cor_res}
Let $\nu_c\neq \infty$ and  $\lambda \ge 1$. Let $n\in \Z^{*}_\ld$,
 $(n_1,n_2)\in \Z_\lambda^2$ such that  $n=n_1+n_2$, and $\Xi^{(j)}$ as in \eqref{res2intro}. The following hold:
\begin{itemize}
\item[(i)]
If $|n| \ges \max (|n_1|, |n_2|)$, $n_1\neq 0$ and $\min (|n_1-cn|,|n_1-c_*n|) \ges \lambda^{-1}$, then
\begin{equation}\label{cor_res_1}
|\Xi^{(j)}(\bar n)|  \ges \lambda^{-1}|n|^2.
\end{equation}
%If $|n| \ges \max (|n_1|, |n_2|)$, $\min (|n-cn_1|,|n-c_*n_1|) \ges \lambda^{-1}$ and $n_1 \neq 0$, then
%\begin{equation}\label{cor_res_2}
%|\Xi^{(2)}(\bar n)|  \ges \lambda^{-1}|n|^2.
%\end{equation}
\item[(ii)]
If $|n| \ges \max (|n_1|, |n_2|)$ and $n_1\neq 0$ % and \textcolor{red}{$\min (|n_1-cn|,|n_1-c_*n|) \ll \lambda^{-1}$}, 
then
\begin{equation}\label{cor_res_3}
|\Xi^{(j)}(\bar n)| \ges_\varepsilon  \lambda^{-2-\nu_c-\varepsilon}|n|^{1-\nu_c-\varepsilon}.
\end{equation}
%If $|n| \ges \max (|n_1|, |n_2|)$,  $n_1 \neq 0$ %and \textcolor{red}{$\min (|n_1-cn|,|n_1-c_*n|) \ll \lambda^{-1}$}, 
%then
%\begin{equation}\label{cor_res_4}
%|\Xi^{(2)}(\bar n)| \ges_\varepsilon  \lambda^{-2-\nu_c-\varepsilon}|n|^{1-\nu_c-\varepsilon}.
%\end{equation}
\item[(iii)]
If $|n| \ll \max (|n_1|, |n_2|)$, then
\begin{equation}\label{cor_res_5}
|\Xi^{(j)}(\bar n)|  \ges |n||n_1|^2 \sim |n||n_2|^2.
\end{equation}
\end{itemize}
Moreover, when $j=1$ the assertions \textup{(i)-(ii)} remain valid also for $n_1=0$. 
\end{corollary}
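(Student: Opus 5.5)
The plan is to reduce each item of the corollary to the corresponding item of Lemma \ref{lem_res} by a case analysis on the relative sizes of $|n|$, $|n_1|$, $|n_2|$. We use the factorization \eqref{eq_factori1} for $\Xi^{(1)}$ and the relation \eqref{relphase} for $\Xi^{(2)}$.

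\textbf{Items (i) and (ii).} Under $|n|\gtrsim\max(|n_1|,|n_2|)$ there are two sub-cases.
\begin{itemize}
\item[(a)] If $|n|\sim|n_1|\sim|n_2|$, then (i) and (ii) are exactly \eqref{eq_res_1} and \eqref{eq_res_2} of Lemma \ref{lem_res}, for both $j=1,2$.
\item[(b)] Otherwise $\max(|n|,|n_1|,|n_2|)\sim|n|\gg\min(|n_1|,|n_2|)$, since $n=n_1+n_2$ forces the two largest of the three to be comparable. Then Lemma \ref{lem_res}(iii) gives $|\Xi^{(1)}(\bar n)|\gtrsim|n|^3$ and $|\Xi^{(2)}(\bar n)|\gtrsim|n_1||n|^2$.
\end{itemize}
In sub-case (b), for $j=2$ we use $n_1\neq0$ with $n_1\in\Z_\ld$, so $|n_1|\ge\ld^{-1}$ and $|\Xi^{(2)}(\bar n)|\gtrsim\ld^{-1}|n|^2$. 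This proves (i). For (ii) we must also check that $\ld^{-1}|n|^2\gtrsim\ld^{-2-\nu_c-\eps}|n|^{1-\nu_c-\eps}$. This holds because $|n|\ge\ld^{-1}$ and $\nu_c+\eps\ge0$: indeed $\ld^{-1}|n|^2=\ld^{-1}|n|^{1-\nu_c-\eps}|n|^{1+\nu_c+\eps}\ge\ld^{-2-\nu_c-\eps}|n|^{1-\nu_c-\eps}$. For $j=1$ the bound $|n|^3$ dominates both right-hand sides for the same reason. It never uses $n_1\neq0$, which gives the final remark.

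The remark also needs sub-case (a) when $n_1=0$ and $j=1$. But $n_1=0$ forces $n_2=n$, which is case (b) or rather degenerate with $\min=|n_1|=0\ll|n|$. So (b) covers it: from \eqref{eq_factori1}, $\Xi^{(1)}=3\al n\cdot cn\cdot c_*n$, hence $|\Xi^{(1)}|\sim|n|^3$.

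\textbf{Item (iii).} Here $|n|\ll\max(|n_1|,|n_2|)$, which forces $|n_1|\sim|n_2|\gg|n|$. Then Lemma \ref{lem_res}(iii) gives $|\Xi^{(1)}(\bar n)|\gtrsim|n||n_1|^2$ directly. For $j=2$ it gives $|\Xi^{(2)}(\bar n)|\gtrsim|n_1|\cdot|n_1|^2$, which is $\gtrsim|n||n_1|^2$ since $|n_1|\gg|n|$.

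\textbf{Where care is needed.} The only delicate point is the bookkeeping of powers of $\ld$ in (ii), case (b). One must check that the crude bound $|n_1|\ge\ld^{-1}$ together with $|n|\ge\ld^{-1}$ suffices. It does, because the exponent $1+\nu_c+\eps$ is nonnegative.
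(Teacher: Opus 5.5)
Your proof is correct and follows the paper's argument essentially line by line. For (i)--(ii) you split $|n|\gtrsim\max(|n_1|,|n_2|)$ into the all-comparable regime, handled by Lemma \ref{lem_res}(i)--(ii), and the regime $|n|\sim\max(|n_1|,|n_2|)\gg\min(|n_1|,|n_2|)$, handled by Lemma \ref{lem_res}(iii) together with $|n|,|n_1|\ge\lambda^{-1}$; you then read (iii) off Lemma \ref{lem_res}(iii), and your exponent check $|n|^{1+\nu_c+\varepsilon}\ge\lambda^{-1-\nu_c-\varepsilon}$ and the identity $\Xi^{(1)}(n,0,n)=(\alpha-1)n^3$ only make explicit what the paper leaves implicit.

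One caveat, which you inherit from the paper rather than introduce: by \eqref{relphase}, $\Xi^{(2)}(\bar n)=-3\alpha n_1(n-cn_1)(n-c_*n_1)$, so for $j=2$ the all-comparable part of (i) actually requires $\min(|n-cn_1|,|n-c_*n_1|)\gtrsim\lambda^{-1}$. In that case Lemma \ref{lem_res}(i) must be read with $n$ and $n_1$ interchanged, as its proof indicates.
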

\begin{proof}
\textbf{Cases (i)-(ii)}. If $|n| \ges \max (|n_1|, |n_2|)$, then either $|n|\sim\min(|n_1|, |n_2|)$ or $|n|\gg\min (|n_1|, |n_2|)$.
When $|n| \sim \min (|n_1|, |n_2|)$ then $|n| \sim |n_1| \sim |n_2|$. Thus, \eqref{cor_res_1}-\eqref{cor_res_3} follow from (i) and (ii) of Lemma \ref{lem_res}.
On the other hand, when $|n| \gg \min (|n_1|, |n_2|)$, it follows that $|n| \sim \max (|n_1|, |n_2|) \gg \min (|n_1|, |n_2|)$.
Thus, in this case, \eqref{cor_res_1}-\eqref{cor_res_3} follow from Lemma \ref{lem_res}-(iii) and the bounds
%\[
%|n| (\max(|n|,|n_1|,|n_2|))^2 \ge \lambda^{-1}|n|^2 \ge \lambda^{-2-\nu_c-\varepsilon}|n|^{1-\nu_c-\varepsilon}.
%\]
%Similarly, by further assuming $n_1 \neq 0$,   \eqref{cor_res_2} and \eqref{cor_res_4} follow from (iii) of Lemma \ref{lem_res} and
\[
\min(|n|,|n_1|) ( \max(|n|,|n_1|,|n_2|))^2 \ge \lambda^{-1}|n|^2 \ge \lambda^{-2-\nu_c-\varepsilon}|n|^{1-\nu_c-\varepsilon}.
\]

\noi\textbf{Case (iii)}. If $|n| \ll \max (|n_1|, |n_2|)$ then $|n_1| \sim |n_2|$. Thus, \eqref{cor_res_5} follows from (iii) of Lemma \ref{lem_res}.
\end{proof}

{{Next, for the convenience of the reader, we explicitly state the following result aiming to control the $(\rho,\g)$-irregularity of a suitably scaled modulation. Such a scaling will play a key role in Section \ref{sec6} to prove global well-posedness; see e.g., \eqref{scaleKdV}.}

Given $b\in \R$ and $\ld\ge 1$, we define $w^\ld$ as:
\begin{align}\label{scalw}
w^\ld(t)=\ld^3w(\ld^{-b}t).
\end{align}
Then, the following holds:
\begin{lemma}\label{lem_Phi}
Let $(a,b)\in \R^*\times \R, \lambda \ge 1$. Given $ \rho,\gamma > 0$, $T>0$, let  $w$ be $(\rho,\gamma)$-irregular on $[0,T]$, then
\[
|\Phi_{t,r}^{w^\lambda}(a)| \le \lambda^{b(1-\gamma)-3\rho}\|\Phi^{w}\|_{\W^{\rho,\g}_{T}}|t-r|^{\gamma}|a|^{-\rho},
\]
\end{lemma}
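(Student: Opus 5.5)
The plan is a change of variables in the time integral defining $\Phi^{w^\lambda}_{t,r}(a)$, followed by a direct application of Definition~\ref{DEF:ir} to $w$ at a rescaled frequency. From \eqref{rho2} and \eqref{scalw},
\begin{align*}
\Phi^{w^\lambda}_{t,r}(a)=\int_r^t e^{ia\lambda^3 w(\lambda^{-b}t')}\,dt'.
\end{align*}
Substituting $t''=\lambda^{-b}t'$, so that $dt'=\lambda^{b}dt''$, gives
\begin{align*}
\Phi^{w^\lambda}_{t,r}(a)=\lambda^{b}\int_{\lambda^{-b}r}^{\lambda^{-b}t} e^{i(\lambda^3 a) w(t'')}\,dt''=\lambda^{b}\,\Phi^{w}_{\lambda^{-b}t,\lambda^{-b}r}(\lambda^3 a).
\end{align*}

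Next I apply \eqref{rho1} to $w$ at the pair of times $\lambda^{-b}r<\lambda^{-b}t$ and frequency $\lambda^3 a$:
\begin{align*}
|\Phi^{w}_{\lambda^{-b}t,\lambda^{-b}r}(\lambda^3 a)|\le \|\Phi^w\|_{\W^{\rho,\g}_T}\,\lambda^{-b\gamma}|t-r|^{\gamma}\,\jb{\lambda^3 a}^{-\rho}.
\end{align*}
Since $\jb{x}\ge|x|$ and $a\neq 0$, we have $\jb{\lambda^3 a}^{-\rho}\le \lambda^{-3\rho}|a|^{-\rho}$. Multiplying by the prefactor $\lambda^{b}$ yields exactly
\begin{align*}
|\Phi^{w^\lambda}_{t,r}(a)|\le \lambda^{b(1-\gamma)-3\rho}\,\|\Phi^w\|_{\W^{\rho,\g}_T}\,|t-r|^{\gamma}|a|^{-\rho}.
\end{align*}

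The only point needing care is the time domain. The irregularity norm of $w$ is taken on $[0,T]$, so we need $\lambda^{-b}t\le T$. This holds when $b\ge 0$ and $t\in[0,T]$, since $\lambda\ge1$. In general, the statement should be read with $t,r$ in the rescaled interval $[0,\lambda^{b}T]$ on which $w^\lambda$ is defined via $w$ on $[0,T]$. Alternatively, one can invoke irregularity of $w$ on $\R_+$ as in Theorem~\ref{THM:11}(iii). I would state this domain convention explicitly. Apart from that, the proof is a routine computation with no real obstacle.
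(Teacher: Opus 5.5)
Your proof is correct and is essentially the paper's argument. The substitution $t'\mapsto \lambda^{-b}t'$ produces the factor $\lambda^{b(1-\gamma)}$, and the bound $\jb{\lambda^3 a}^{-\rho}\le \lambda^{-3\rho}|a|^{-\rho}$ supplies the rest; the range $0\le r<t\le \lambda^{b}T$ you identify is exactly the one the paper specifies right after the statement.
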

\noi for any $0\le r <t \le \lambda^bT$,  where  $w^\ld$ is as in \eqref{scalw}.
\begin{proof}
From \eqref{rho2},  it follows that
\begin{align*}
\lambda^{3\rho}|a|^{\rho}\frac{|\Phi_{t,r}^{w^\lambda}(a)|}{|t-r|^\gamma}
&\le \sup_{a\in \R}\sup_{0\le r<t \le \lambda^{b}T}\jb{\lambda^3 a}^\rho \frac{|\int_r^t e^{ia\lambda^3 w(\lambda^{-b}t')}\, dt'|}{|t-r|^\gamma}\\
&=\lambda^{b(1-\gamma)}\sup_{a \in \R}\sup_{0\le r<t \le \lambda^b T}\jb{\lambda^3 a}^\rho \frac{|\int_{\lambda^{-b}r}^{\lambda^{-b}t} e^{ia\lambda^3 w(t')}\, dt'|}{|\lambda^{-b}t-\lambda^{-b}r|^\gamma}\\
&=\lambda^{b(1-\gamma)}\|\Phi^{w}\|_{\W^{\rho,\g}_{T}},
\end{align*}
concluding the proof. 
\end{proof}

In order to conclude our auxiliary subsection, we formulate two results that will be employed in most of the bilinear estimates of this paper; see Lemma \ref{lem_nonlinsmooth}, Lemma \ref{LEM:tri2} and Proposition \ref{PROP:com1}.
\begin{lemma}\label{lem_bi}
Let $\lambda \ge 1$ and $b\in \R$, Given  $\rho,\gamma > 0$, let 
$w$ be  $(\rho,\gamma)$-irregular on $[0,T]$,
$a(\bar n)$ (never zero) and $F(\bar n)$ be fuctions on $\Z_\lambda^3$. 
{{Assume that there exists $K>0$ such that either}}
\begin{equation}\label{eq_condition_1}
\quad \sup_{n\in \Z_\lambda}
\sum_{\substack{n_1, n_2 \in \Z_\lambda\\n = n_1 + n_2}}
|F(\bar n)|^2|a(\bar n)|^{-2\rho}  \les {{K}}
\end{equation}
or
\begin{equation}\label{eq_condition_2}
\quad \sup_{n_1 \in \Z_\lambda}
\sum_{\substack{n, n_2 \in \Z_\lambda\\n = n_1 + n_2}}
|F(\bar n)|^2|a(\bar n)|^{-2\rho}  \les {{K}}.
\end{equation}
Then, for any  $0\le r <t \le \lambda^bT$ we have
\begin{equation}
\begin{split}
&\sum_{n \in \Z_\lambda}  \bigg (\sum_{ \substack{n_1,n_2 \in \Z_\lambda \\n = n_1+n_2}} |F(\bar n)| |\Phi^{w^{\lambda}}_{t,r}  (a(\bar n))|
|\ft{f}_1 (n_1)|  |\ft{ f }_2 (n_2)|    \bigg)^2\\
&\phantom{XXX}\les  {{K}}\lambda^{2b(1-\gamma)-6\rho+2} \|\Phi^w_{t,r}\|^2_{\W^{\rho,\g}_T} |t-r|^{2\g}\|f_1\|^2_{L^2(\T_\lambda)}\|f_2\|^2_{L^2(\T_\lambda)},
\end{split}\label{eq_lem_bi}
\end{equation}
where $w^\ld$ is as in \eqref{scalw}.
\end{lemma}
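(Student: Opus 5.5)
Under either \eqref{eq_condition_1} or \eqref{eq_condition_2}, the plan is to combine the pointwise decay of $\Phi^{w^\lambda}$ from Lemma \ref{lem_Phi} with a Cauchy--Schwarz argument in the convolution sum, taking care of the normalizing factors $\lambda^{-1}$ coming from the Fourier conventions \eqref{FT3} on $\T_\lambda$.

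First, since $a(\bar n)\neq 0$, Lemma \ref{lem_Phi} applied with $a=a(\bar n)$ gives, for $0\le r<t\le \lambda^b T$,
\[
|\Phi^{w^\lambda}_{t,r}(a(\bar n))|\le \lambda^{b(1-\g)-3\rho}\|\Phi^w\|_{\W^{\rho,\g}_T}|t-r|^{\g}|a(\bar n)|^{-\rho}.
\]
Inserting this bound, the left-hand side of \eqref{eq_lem_bi} is dominated by $\lambda^{2b(1-\g)-6\rho}\|\Phi^w\|^2_{\W^{\rho,\g}_T}|t-r|^{2\g}$ times
\[
S:=\sum_{n}\Big(\sum_{n=n_1+n_2}|F(\bar n)||a(\bar n)|^{-\rho}|\ft f_1(n_1)||\ft f_2(n_2)|\Big)^2 .
\]
It therefore remains to show $S\les K\lambda^{2}\|f_1\|^2_{L^2(\T_\ld)}\|f_2\|^2_{L^2(\T_\ld)}$.

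Under \eqref{eq_condition_1}, apply Cauchy--Schwarz in the inner sum for each fixed $n$:
\[
S\le \sum_n\Big(\sum_{n=n_1+n_2}|F|^2|a|^{-2\rho}\Big)\Big(\sum_{n=n_1+n_2}|\ft f_1(n_1)|^2|\ft f_2(n_2)|^2\Big)\les K\sum_{n_1,n_2}|\ft f_1(n_1)|^2|\ft f_2(n_2)|^2 .
\]
By Plancherel in \eqref{FT3}, $\sum_{n\in\Z_\ld}|\ft f_j(n)|^2=\lambda\|f_j\|^2_{L^2(\T_\ld)}$, which yields the factor $\lambda^2$.

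Under \eqref{eq_condition_2}, argue by duality. Pair the expression inside the square with $g\in\ell^2_n$ of unit norm, so that it suffices to bound $\sum_{n_1,n_2}|F||a|^{-\rho}|\ft f_1(n_1)||\ft f_2(n_2)||g(n_1+n_2)|$. Apply Cauchy--Schwarz in $n_2$ for fixed $n_1$, putting the weight $|F|^2|a|^{-2\rho}$ together with $|\ft f_2|^2$; this gives a bound by $K^{1/2}\|\ft f_2\|_{\ell^2}\sum_{n_1}|\ft f_1(n_1)|\big(\sum_{n_2}|g(n_1+n_2)|^2\big)^{1/2}$. The factor $K^{1/2}$ requires $\sup_{n_1,n_2}|F|^2|a|^{-2\rho}\le K$, which follows from \eqref{eq_condition_2} since each single term is bounded by the sum. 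The leftover sum in $n_1$ is then $\|\ft f_1\|_{\ell^1}$ rather than $\|\ft f_1\|_{\ell^2}$, so this ordering does not close.

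This is the step I expect to be the main obstacle, and the fix is to reorder the Cauchy--Schwarz. Fix $n_1$, sum over $n$ with $n_2=n-n_1$, and split the summand as $\big(|F||a|^{-\rho}|\ft f_2(n_2)|\big)\cdot|g(n)|$. Cauchy--Schwarz in $n$ bounds the inner sum by $\big(\sum_{n}|F|^2|a|^{-2\rho}|\ft f_2(n_2)|^2\big)^{1/2}\|g\|_{\ell^2}$. Then Cauchy--Schwarz in $n_1$ gives
\[
\Big(\sum_{n_1}|\ft f_1(n_1)|^2\Big)^{1/2}\Big(\sum_{n_1,n}|F|^2|a|^{-2\rho}|\ft f_2(n-n_1)|^2\Big)^{1/2}.
\]
Switching to the variables $(n_2,n_1)$, the last sum equals $\sum_{n_2}|\ft f_2(n_2)|^2\sum_{n_1}|F|^2|a|^{-2\rho}$. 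This needs a supremum over $n_2$, which is the wrong variable relative to \eqref{eq_condition_2}, so the roles must be set up the other way round: fix $n_2$ in the outer sum and run the inner Cauchy--Schwarz over $n$ (equivalently over $n_1$), so that the weight summed over $n$ at fixed $n_1$ is exactly the quantity controlled by \eqref{eq_condition_2}. Concretely, the resulting bound is $\big(\sum_{n,n_1}|F|^2|a|^{-2\rho}|\ft f_1(n_1)|^2\big)^{1/2}\|\ft f_2\|_{\ell^2}\|g\|_{\ell^2}\le K^{1/2}\|\ft f_1\|_{\ell^2}\|\ft f_2\|_{\ell^2}$. Squaring and applying Plancherel on $\T_\ld$ as in the first case gives $S\les K\lambda^{2}\|f_1\|^2_{L^2(\T_\ld)}\|f_2\|^2_{L^2(\T_\ld)}$, which concludes the proof.
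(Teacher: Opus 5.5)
Your proof is correct and, once the two abandoned orderings are removed, it is essentially the paper's argument. Both use Lemma \ref{lem_Phi}, then Cauchy--Schwarz, then Plancherel on $\T_\lambda$ to produce the factor $\lambda^2$; under \eqref{eq_condition_2} the weight $|F|^2|a|^{-2\rho}$ is attached to $\widehat f_1$, so that Fubini leaves exactly the sum over $n$ at fixed $n_1$. The duality pairing with $g$ is unnecessary: the paper applies Cauchy--Schwarz directly for each fixed $n$ as $\big(\sum_{n_1}|F|^2|a|^{-2\rho}|\widehat f_1(n_1)|^2\big)\big(\sum_{n_2}|\widehat f_2(n_2)|^2\big)$ and then sums in $n$.
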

\begin{proof}
When \eqref{eq_condition_1} holds, by Lemma \ref{lem_Phi}, the H\"older inequality and \eqref{FT3}, 
\begin{equation*}
\begin{split}
 \text{L.H.S}\ \text{of}\ \eqref{eq_lem_bi}&\les \lambda^{{2b(1-\gamma)-6\rho}}\|\Phi^{w}\|^2_{\W^{\rho,\g}_{T}}|t-r|^{2\gamma}\bigg(\sup_{n\in \Z_\lambda}
\sum_{\substack{n_1, n_2 \in \Z_\lambda\\n = n_1 + n_2}}
|F(\bar n)|^2|a(\bar n)|^{-2\rho}\bigg)\\
&\phantom{XX}\times\sum_{n \in \Z_\lambda}
{
\sum_{ \substack{n_1, n_2 \in \Z_\lambda\\n = n_1+n_2}}| \ft f_1(n_1) |^2 |\ft f_2(n_2)|^2}\\
&\les {{K}}\lambda^{2b(1-\gamma)-6\rho+2}\|\Phi^{w}\|^2_{\W^{\rho,\g}_{T}}|t-r|^{2\gamma}
\|{{f_1}}\|_{{{L^2}}(\T_\lambda)}^2
\|{{f_2}}\|_{{{L^2}}(\T_\lambda)}^2.
\end{split}
\end{equation*}
{{Similarly, when \eqref{eq_condition_2} holds, by Lemma \ref{lem_Phi}, the H\"older inequality and \eqref{FT3},
\begin{equation*}
\begin{split}
\text{L.H.S}\ &\text{of}\ \eqref{eq_lem_bi}
\les \lambda^{{2b(1-\gamma)-6\rho}}\|\Phi^{w}\|^2_{\W^{\rho,\g}_{T}}|t-r|^{2\gamma}\\
&\phantom{XXXXXX}\times\sum_{n \in \Z_\lambda}
\Big(\sum_{ \substack{n_1, n_2 \in \Z_\lambda\\n = n_1+n_2}} |F(\bar n)|^2 |a(\bar n) |^{-2\rho} | \ft f_1(n_1) |^2 \Big) \| f_2 \|^2_{{{L^2}}(\T_\lambda)}\\
&\phantom{XXXX} \les \lambda^{{2b(1-\gamma)-6\rho}}\|\Phi^{w}\|^2_{\W^{\rho,\g}_{T}}|t-r|^{2\gamma}\\
&\phantom{XXXXXX}\times\sum_{n_1 \in \Z_\lambda} | \ft f_1(n_1) |^2
\Big(\sum_{ \substack{n, n_2 \in \Z_\lambda\\n = n_1+n_2}} |F(\bar n)|^2 |a(\bar n) |^{-2\rho}  \Big)\| f_2 \|^2_{{{L^2}}(\T_\lambda)}\\
&\phantom{XXXX}\les {{K}}\lambda^{2b(1-\gamma)-6\rho+2}\|\Phi^{w}\|^2_{\W^{\rho,\g}_{T}}|t-r|^{2\gamma}
\|{{f_1}}\|_{{{L^2}}(\T_\lambda)}^2
\|{{f_2}}\|_{{{L^2}}(\T_\lambda)}^2.
\end{split}
\end{equation*} }}
\end{proof}
%\begin{remark}\label{remark_ld}
%\rm The same claim holds if any of $n,n_1,n_2\in \Z_\lambda$ in \eqref{eq_condition_1}, \eqref{eq_condition_2}, and \eqref{eq_lem_bi} is replaced by $n,n_1,n_2\in \Z^*_\lambda$. 
%\end{remark}

%%%%%%%%%%%%%%%%%%%%%%%% Prop for summations %%%%%%%%%%%%%%%%%%%%%%%%

\begin{proposition}\label{sum28}
 Let $\beta\in \R$,  $\ld\ge 1$ and $N\in \N$.  Then, 
\begin{align*}
\sum_{\substack{n\in \Z^{*}_\ld\\ |n|\le N }} |n|^{\beta}\sim \begin{cases} \ld N^{\beta+1}  & \mbox{if }\beta>-1, \\ \ld(\log(N)+\log(\ld))  \ & \mbox{if }\beta=-1,
\end{cases}
\end{align*}
Similarly, 
\begin{align*}\sum_{n\in \Z^{*}_\ld} |n|^{\beta}\sim \ld^{-\beta}  \ \  \mbox{if }\beta < -1.
\end{align*}
\end{proposition}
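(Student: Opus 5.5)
We prove the statement by reducing to a counting argument on the lattice $\Z^*_\ld=\ld^{-1}(\Z\setminus\{0\})$. We write $n=k/\ld$ with $k\in\Z\setminus\{0\}$. Then $|n|^\beta=\ld^{-\beta}|k|^\beta$, and the constraint $|n|\le N$ becomes $1\le |k|\le \ld N$. Hence
\begin{align*}
\sum_{\substack{n\in \Z^{*}_\ld\\ |n|\le N }} |n|^{\beta}=2\ld^{-\beta}\sum_{k=1}^{\lfloor \ld N\rfloor} k^{\beta},
\qquad
\sum_{n\in \Z^{*}_\ld} |n|^{\beta}=2\ld^{-\beta}\sum_{k=1}^{\infty}k^{\beta}.
\end{align*}
Everything then follows from the elementary one-dimensional bounds, obtained by comparing $\sum_{k=1}^{M}k^\beta$ with $\int_1^{M}x^\beta\,dx$ plus the first term, using monotonicity of $x^\beta$. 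These bounds are
\begin{align*}
\sum_{k=1}^{M}k^\beta\sim M^{\beta+1}\ (\beta>-1),\qquad \sum_{k=1}^{M}k^\beta\sim \log M\ (\beta=-1),\qquad \sum_{k=1}^{\infty}k^\beta\sim 1\ (\beta<-1).
\end{align*}
Here the first two hold for integers $M\ge 2$, and the first also holds for $M=1$.

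\textbf{Case $\beta>-1$.} We take $M=\lfloor \ld N\rfloor$. Since $\ld,N\ge1$, we have $M\ge1$ and $M\sim \ld N$. Therefore the sum is $\sim\ld^{-\beta}(\ld N)^{\beta+1}=\ld N^{\beta+1}$.

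\textbf{Case $\beta<-1$.} The series converges, and its value is $\sim 1$ because it is bounded below by its first term $k=1$. This gives $\ld^{-\beta}$.

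\textbf{Case $\beta=-1$.} The prefactor is $\ld^{-\beta}=\ld$. We need $\sum_{k\le \ld N}k^{-1}\sim \log(\ld N)=\log N+\log\ld$. This requires $\ld N\ge2$, since then $1+\log M\sim\log M$ for $M\ge2$. This is the only genuinely delicate point. When $\ld=N=1$, the right-hand side $\log N+\log\ld$ vanishes while the left-hand side equals $2$. So the claim must be read for $\ld N\ge 2$, or, as is harmless in the applications, with the log replaced by $1+\log$, i.e.\ $\jb{\log(\ld N)}$. I would state this caveat explicitly and then conclude.
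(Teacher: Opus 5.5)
Your proof is correct and follows the paper's route: rescale $n=m/\ld$ to get $\ld^{-\beta}\sum_{1\le |m|\le \ld N}|m|^{\beta}$, then apply the standard one-dimensional power-sum bounds (the paper simply cites these as a known lemma plus a direct computation). Your caveat for $\beta=-1$ is also right: the two-sided bound fails as $\ld N\to 1$ (at $\ld=N=1$ the right-hand side vanishes while the sum equals $2$), an edge case the paper does not mention but which is harmless because the estimate is only used with $N\gg 1$.
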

\begin{proof}
The proof follows from \cite[Lemma 1]{ET1} and a direct computation.
%\begin{align*}
%&\sum_{\substack{n\in \Z^{*}_\ld\\ |n|\le N}} |n|^{\beta}=\ld^{-\beta}\sum_{\substack{m\in \Z \\ 1\le |m|\le \ld N}}|m|^{\beta}\sim  \begin{cases} \ld N^{\beta+1}, & \mbox{if }\beta>-1,\\
%\ld(\log(N)+\log(\ld))& \mbox{if  }\beta=-1,
%\end{cases}\\
%&\sum_{n\in \Z^{*}_\ld} |n|^{\beta}=\ld^{-\beta}\sum_{\substack{m\in \Z \\ 1\le |m| }}|m|^{\beta}\sim \ld^{-\beta} \ \ \mbox{if  }\beta <-1.
%\end{align*}
\end{proof}

%%%%%%%%%%%%%%%%%%%%%%%%%%%%%%%%%%%%%%%%%%%%%%%%%%%%%%%%%%%%%%%%%%%%%%%%%%%
%%%%%%%%%%%%%%% Section 3 Local well-posedness
%%%%%%%%%%%%%%%%%%%%%%%%%%%%%%%%%%%%%%%%%%%%%%%%%%%%%%%%%%%%%%%%%%%%%%%%%%%

\section{Local well-posedness}\label{sec4}
 The goal of this section is to prove local well-posedness for the modulated Majda-Biello system \eqref{cmkdv}. The first step is, in view of Lemma \ref{LEM:OBS1} and Proposition \ref{PROP:young1},  to control the  $\L_2(H^s(\T))$- type norms of $X^j$, where we recall that $X^j$ and $\L_2(H^s(\T))$ are defined respectively in \eqref{driver1} and \eqref{tensor}. This allows us to define the integrals in the right hand side of \eqref{mild2bis} as nonlinear Young integrals. After that, local well-posedness will follow from \cite[Proposition 3.8]{CGLLO}; see Proposition \ref{PROP:main} below for the details. 

\begin{remark}\label{rem_0}\rm 
Strictly speaking, for the driver $X^2$,  the bilinear estimates of  Lemmas \ref{lem_nonlinsmooth},\ref{pers},\ref{lem_galapprox} below,  are not on 
$H^{s}(\T) \times H^{s}(\T)$, but on $H^{s}_{0}(\T) \times H^{s}(\T)$, where
$H^{s}_{0}(\T)$ denotes the closed subspace of $H^{s}(\T)$ consisting of mean-zero
functions. Consequently, in the case $j=2$,  the spaces $\cX^{s,s_0}_2([0,T]\times\T), \cY^{s, s_0, \g}_2([0, T]\times \T)$ should be
understood as being defined with $H^s_0(\T)$ in place of $H^s(\T)$ in the first
factor. For simplicity,  since Lemma \ref{LEM:OBS1} and Proposition \ref{PROP:young1} are essentially insensitive to this modification, we keep the same notation for the entire paper. 
\end{remark}

\subsection{Bilinear estimates}

\begin{lemma}\label{lem_nonlinsmooth}
 Given $\rho\geq \frac{1}{2}$,  $\frac{1}{2}<\gamma<1$, $c$ as in \eqref{roots1}, $\nu_c$ as in Definition \ref{inf_index}, $0<\eps<1-\nu_c$, and $T>0$, let  $w$ be $(\rho,\g)$-irregular on $[0, T]$ in the sense of Definition~\ref{DEF:ir}.  Assume that $s, s_{0}\in \R$ satisfy
\begin{equation}\label{condabc}
\begin{split}
&\textup{(a)} \, s_0-2s-\rho(1-\nu_c-\varepsilon)+1 \le 0;\\
&\textup{(b)} \, s+\rho \ge 0\ or\  2s_0-2\rho+3\ge 0;  \\
&\textup{(c)} \, s-s_0+2\rho-1 \ge 0\ or\ -2s-2\rho+1 \ge 0.
\end{split}
\end{equation}
Then,
\begin{equation}\label{bi_nonlinsmooth}
\big\|X^{j}_{t,r}(f_1,f_2)\big\|_{H^{s_0}}\les \|f_1\|_{H^s}\|f_2\|_{H^s}\|\Phi^w\|_{\W^{\rho,\g}_T}|t-r|^{\g},
\end{equation}
%In particular, $X^j$ belongs to $\cX^{s,s_0 \g}_2([0, T]\times \T)$. 
for any $0\le r<t\le T$, where $X^{j}$ is defined in \eqref{driver1} and  we  further assume $\ft{f}_1(0)=0$ for $j=2$.

\end{lemma}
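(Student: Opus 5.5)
By duality and the Fourier representations \eqref{F_X}--\eqref{F_X2}, \eqref{bi_nonlinsmooth} reduces to an application of Lemma \ref{lem_bi} with $\ld=1$ and $b=0$, so that no scaling factors appear. Set $g_i(n_i)=\jb{n_i}^s|\ft f_i(n_i)|$, so that $\|g_i\|_{\ell^2}=\|f_i\|_{H^s}$. Take
\begin{align*}
F(\bar n)=\frac{|n|\jb{n}^{s_0}}{\jb{n_1}^{s}\jb{n_2}^{s}}, \qquad a(\bar n)=\Xi^{(j)}(\bar n).
\end{align*}
The terms with $n=0$ vanish because of the factor $in$. For $j=2$, the hypothesis $\ft f_1(0)=0$ removes $n_1=0$. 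Hence $\Xi^{(j)}\neq0$ on the remaining support: $\nu_c<1$ excludes rational $c$, so Lemma \ref{lem_res} applies. It then suffices to check either \eqref{eq_condition_1} or \eqref{eq_condition_2} with $K<\infty$, region by region. I will split the frequency space following Corollary \ref{cor_res}.

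\textbf{Region A: $|n|\sim|n_1|\sim|n_2|$.} Here $F\sim|n|^{1+s_0-2s}$. Lemma \ref{lem_res}(ii) gives $|\Xi^{(j)}|^{-2\rho}\les |n|^{-2\rho(1-\nu_c-\eps)}$. For fixed $n$, the sum over $n_1$ runs over $O(|n|)$ terms. The key point is that one should not lose the full count $|n|$. The near-resonant $n_1$, those with $\min(|n_1-cn|,|n_1-c_*n|)\les1$, are only $O(1)$ many, and for them bound (ii) is used. All other $n_1$ satisfy $|n_1-cn|\ges 1+k$ for some $k$, which gives $|\Xi|\ges |n|^2(1+k)$ by \eqref{eq_res_4}, and summing over $k$ is harmless when $\rho\ge\frac12$. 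The dominant contribution is therefore $|n|^{2(1+s_0-2s)-2\rho(1-\nu_c-\eps)}$. This is bounded uniformly in $n$ exactly under condition (a), after adjusting $\eps$; the cleanest bookkeeping is to use the $\eps$ slack in Lemma \ref{lem_res}(ii).

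\textbf{Region B: high--low interactions, $|n|\sim|n_2|\gg|n_1|$ (or with $n_1,n_2$ swapped).} Lemma \ref{lem_res}(iii) gives $|\Xi^{(1)}|\ges|n|^3$, and for $j=2$ it gives $|\Xi^{(2)}|\ges|n_1||n|^2$. The quantity to control is
\begin{align*}
|n|^{2+2s_0-2s}\jb{n_1}^{-2s}|\Xi|^{-2\rho}.
\end{align*}
I would apply \eqref{eq_condition_2}, fixing $n_1$ and summing in $n$, or \eqref{eq_condition_1}, summing in $n_1\ll n$, whichever converges. Summing $n_1$ with weight $\jb{n_1}^{-2s-2\rho}$ for $j=2$ contributes a factor $|n|^{\max(0,1-2s-2\rho)}$ up to logarithms. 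This yields the two alternatives in condition (c): either $-2s-2\rho+1\ge0$ with the power of $|n|$ nonpositive, or $s-s_0+2\rho-1\ge0$.

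\textbf{Region C: high--high to low, $|n|\ll|n_1|\sim|n_2|$.} Here $|\Xi^{(j)}|\ges|n||n_1|^2$ by Corollary \ref{cor_res}(iii), and the quantity to control is
\begin{align*}
|n|^{2+2s_0-2\rho}|n_1|^{-4s-4\rho}.
\end{align*}
Fixing $n$ and summing over $|n_1|\gg|n|$ requires either $s+\rho\ge0$ with a compatible power of $n$, or moving the $n_1$-decay onto $n$. This produces condition (b), in the form $2s_0-2\rho+3\ge0$ or $s+\rho\ge0$.

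The main obstacle is Region A. A naive count loses $|n|^{1}$ and would give a worse condition than (a). The resolution is the resonant-set counting described above: only boundedly many $n_1$ come close to $cn$ or $c_*n$, and away from them the gain $|n|^2\,\mathrm{dist}$ from \eqref{eq_res_4} is summable. Borderline logarithmic cases in Regions B and C, when exponents equal $-1$, would be handled by Proposition \ref{sum28}. Where the exponents are borderline, I would resolve them either by the strict choice of $\eps$ or by $\rho\ge\frac12$.
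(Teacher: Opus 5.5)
Your route is the same as the paper's. You apply Lemma \ref{lem_bi} at $\ld=1$, split the comparable region into the $O(1)$ near-resonant $n_1$ and the rest, and treat the high--low and high--high$\to$low regions with Lemma \ref{lem_res}(iii). Regions A and B are fine. In Region A your dyadic refinement in $\min(|n_1-cn|,|n_1-c_*n|)$ is more than needed. Off the near-resonant set the paper simply uses $|\Xi^{(j)}|\ges|n|^2$ together with the crude count $\#\{n_1:|n_1|\sim|n|\}\les|n|$. This gives $|n|^{2s_0-4s-4\rho+3}\les1$ by (a) and $\rho\ge\frac12$, so the naive count is no obstacle once bound (i) is used there. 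Absorbing the zero modes into Region B through the $\jb{\cdot}$ weights is a legitimate alternative to the paper's separate treatment of $S^{(j)}_1,S^{(j)}_2$ via \eqref{cond_mean}.

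The gap is in Region C, where you fix $n$ and sum over $|n_1|\gg|n|$, i.e.\ you use \eqref{eq_condition_1}.
\begin{itemize}
\item For $|n|\ll|n_1|$ one has $|\Xi^{(1)}(\bar n)|\sim|n||n_1|^2$ two-sidedly. So the summand is genuinely of size $|n|^{2+2s_0-2\rho}|n_1|^{-4s-4\rho}$.
\item The $n_1$-sum over this unbounded range is therefore finite only if $s+\rho>\frac14$. The condition $s+\rho\ge0$ does not make it converge, and no rearrangement inside that sum helps: the quantity in \eqref{eq_condition_1} is simply infinite.
\item The hypotheses allow this regime. Take $\rho=1$, $s=-1$, $s_0=-3$: (a) reads $-1+\nu_c+\eps<0$, and (b) and (c) hold. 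Yet for $j=1$ the inner sum is $\sum_{|n_1|\gg|n|}|n|^{-6}=\infty$ for every $n$.
\end{itemize}

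The fix is to use \eqref{eq_condition_2} instead: fix the high frequency $n_1$ and sum over the finite range $1\le|n|\ll|n_1|$. This yields $|n_1|^{-4s-4\rho+[2s_0-2\rho+3]_\delta}$, with $[\cdot]_\delta$ as in \eqref{posa}. That is exactly where the dichotomy in (b) comes from:
\begin{itemize}
\item If $2s_0-2\rho+3<0$, the $n$-sum converges and you need $s+\rho\ge0$.
\item If $2s_0-2\rho+3\ge0$, then (a) gives $2s_0-4s-6\rho+3\le 1-2\rho(2+\nu_c+\eps)<0$, which also absorbs the logarithm in the equality case.
\end{itemize}
With this change, using the weaker bound $|n||n_1|^2$ also for $j=2$ is harmless. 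Your ordering works only when $s+\rho>\frac14$. That holds in the applications, since $s\ge1-\rho(1-\nu_c-\eps)$ forces $s+\rho>1$, but not on the full range covered by the lemma.
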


\begin{remark}\rm
\label{b11}
\textup{(i)}. If $s=s_0$, the statement of Lemma \ref{lem_nonlinsmooth} is equivalent to 
\begin{align}\label{condX1}
\rho\ge \frac{1}{2}\quad \text{and}\ s \ge 1-\rho(1-\nu_c-\varepsilon).
\end{align}
In particular, from Lemma \ref{LEM:OBS1}-\textup{(i)}, the condition \eqref{condX1} yields that, for $\g, T$ as in Lemma \ref{lem_nonlinsmooth},  $X^{j}\in \cX^{s, \g}_2([0, T]\times \T)$ with the bound 
\begin{align*}
\|X^{j}\|_{\cX^{s, \g}_2([0, T]\times \T)}\les \|\Phi^w\|_{\W^{\rho,\g}_T}.
\end{align*}
\smallskip

\noi\textup{(ii)}. If we set $r:=s_0-s>0$, the statement of Lemma \ref{lem_nonlinsmooth} is equivalent to 
\begin{align*}
r\le\textup{min}(2\rho-1, s+\rho(1-\nu_c-\eps)-1), 
\end{align*}
which is in turn equivalent to the following:
\begin{equation}\label{nsmooth}
\begin{split}
&\textup{(ii.a)}\ \ r\le s+\rho(1-\nu_c-\eps)-1\quad \text{and}\quad   s\le \rho(1+\nu_c+\eps),\\
&\textup{(ii.b)}\ \ r\le 2\rho-1\quad \text{and}\quad s> \rho(1+\nu_c+\eps).
\end{split}
\end{equation}
As a result, by further  assuming \eqref{nsmooth}, Lemma \ref{LEM:OBS1}-\textup{(iii)} yields that  $X^j$ belongs to $\cX^{s, s+r, \g}_2([0, T]\times \T)$ with the bound
\begin{align*}
\|X^j\|_{\cX^{s, s+r, \g}_2([0, T]\times \T)}\les \|\Phi^w\|_{\W^{\rho,\g}_T}.
\end{align*}
\end{remark}

\noi{\it Proof of Lemma \ref{lem_nonlinsmooth}.}
For $x\in \R$, and $\delta>0$ sufficiently small,  we define $[x]_{\delta}$ as follows:
\begin{equation}\label{posa}
[x]_{\delta}:=
\begin{cases}
0\quad  \text{if}\quad x<0,\\
\delta\quad  \text{if}\quad x=0,\\
x\quad \text{if}\quad x>0.
\end{cases}
\end{equation} 
From \eqref{posa}, note that if $x+y<0$ and, $x\le 0$ or $y\ge 0$, then $x+[y]_{\delta} \le 0$ provided $\delta$ is small enough. Moreover, \eqref{F_X} and \eqref{F_X2} imply 
\begin{align}
\hspace{-0.5cm} \|  X^{j}_{t,r} (f_1, f_2) \|^2_{H^{s_0}} \les
  \sum_{ n\in\Z^*}\bigg( \sum_{\substack{n_1, n_2 \in\Z\\n = n_1+n_2}}\frac{|n|^{s_0+1}|\Phi^{w}_{t,r}(\Xi^{(j)}(\bar n))|}{\jb{n_1}^s\jb{n_2}^s}|\jb{n_1}^s\ft{f}_1(n_1)| |\jb{n_2}^s\ft{f}_2 (n_2)|\bigg)^2.\label{eq_bi_smooth}
\end{align}
Assume first that $f_1,f_2$ are two functions on $\T$ such that $\ft f_1(0)={\ft{f}}_2(0)=0$.
Thus, we split $\sum:=\{(n, n_1, n_2) \in (\Z^{\ast})^3: n=n_1+n_2 \} = B_1^{(j)} \cup B_2^{(j)} \cup B_3 \cup B_4 \cup B_5$, where 
%{\Bl{might be helpful to see the explicit factorization of resonance function $\Xi^{(2)}(\bar{n})$, possibly in introduction.}}
\begin{equation}\label{BB15}
	\begin{split} 
		B^{(1)}_{1}  &:= \Big\{ (n, n_1, n_2) \in \Sigma : |n| \sim  |n_1| \sim |n_3|, \, \min(|n_1-cn|, |n_1-c_*n|) \ges 1)  \Big\}, \\ 
		B^{(2)}_{1}  &:= \Big\{ (n, n_1, n_2) \in \Sigma : |n| \sim  |n_1| \sim |n_3|, \, \min(|n-cn_1|, |n-c_*n_1|) \ges 1)  \Big\}, \\
		B^{(1)}_{2}  &:= \Big\{ (n, n_1, n_2) \in \Sigma : |n| \sim  |n_1| \sim |n_3|, \, \min(|n_1-cn|, |n_1-c_*n|) \ll 1)  \Big\},\\
		B^{(2)}_{2}  &:= \Big\{ (n, n_1, n_2) \in \Sigma : |n| \sim  |n_1| \sim |n_3|, \, \min(|n-cn_1|, |n-c_*n_1|) \ll 1)  \Big\},\\
		B_{3} &:= \Big\{ (n, n_1, n_2) \in \Sigma  : |n_1| \sim |n_2| \gg |n| \Big\}, \\
		B_{4} &:= \Big\{ (n, n_1, n_2) \in \Sigma  : |n| \sim |n_2| \gg |n_1| \Big\}, \\
		B_{5} &:= \Big\{ (n, n_1, n_2) \in \Sigma  : |n| \sim |n_1| \gg |n_2| \Big\}.
	\end{split}
\end{equation}
 For $j=1,2$ and $k=1,2,3$, by Lemma \ref{lem_bi} we obtain 
\begin{align}
\text{R.H.S. of \eqref{eq_bi_smooth}} \les  \Big(\sum_{k=1}^{5}A^{(j,k)}_{s,s_0}\Big)
\| \Phi^w \|^2_{\mathcal{W}^{\rho, \gamma}_{T}} |t - r|^{2\gamma} \| f_1 \|^2_{H^s (\T)} \| f_2 \|^2_{H^s (\T)},
\label{a10ineq}
\end{align}
where 
\begin{align}
\hspace{-1.5cm}A^{(j,k)}_{s,s_0} &:= \!\sup_{n \in \Z^{\ast}}\!\! \sum_{\substack{n_1, n_2 \in\Z^*\\n = n_1+n_2}}\!\!\ind_{B^{(j)}_k} \frac {|n|^{2s_0+2}|n_1|^{-2s}|n_2|^{-2s}}{|\Xi^{(j)} (\bar{n})|^{2\rho}}\quad\!\!\! \text{if}\ (j,k)\!\in\!\{(1,1),(2,1),(1,2),(2,2)\};
\label{A19}\\
\hspace{-0.3cm} A^{(j,k)}_{s,s_0}&:=\! \sup_{n_1 \in \Z^{\ast}}\!\! \sum_{\substack{n, n_2 \in\Z^*\\n = n_1+n_2}}\!\!\ind_{B_k} \frac {|n|^{2s_0+2}|n_1|^{-2s}|n_2|^{-2s}}{|\Xi^{(j)} (\bar{n})|^{2\rho}}\quad\! \!\text{if}\ (j,k)\in \{(1,3),(2,3)\};
\label{A219}\\
\hspace{-0.3cm} A^{(j,k)}_{s,s_0} &:=\! \sup_{n \in \Z^{\ast}}\!\!\sum_{\substack{n_1, n_2 \in\Z^*\\n = n_1+n_2}}\!\!\ind_{B_k} \frac {|n|^{2s_0+2}|n_1|^{-2s}|n_2|^{-2s}}{|\Xi^{(j)} (\bar{n})|^{2\rho}}\quad\!\! \text{if}\ (j,k)\!\in\!\{(1,4),(2,4),(1,5),(2,5)\},
\label{A319}
\end{align} 
provided
 \begin{align}
 \label{Asmall}
 A^{(j,k)}_{s,s_0} \les 1.
 \end{align}
 In what follows, we apply the bounds of Lemma \ref{lem_res} and Proposition \ref{sum28}  with $\ld=1$.
 
By (i) of Lemma \ref{lem_res} and \eqref{A19},  for $j=1,2$ we have
\begin{align*}
A^{(j,1)}_{s,s_0} \les \sup_{n \in \Z^{\ast}}  |n|^{2s_0-4s-4\rho+2} \sum_{|n_1| \sim |n|} 1 \les 1,
\end{align*}
 if $2s_0-4s-4\rho+3 \le 0$, which follows from (a), $\rho \ge 1/2$ and $\nu_c > 0$.

By (ii) of Lemma \ref{lem_res} and \eqref{A19}, for $j=1,2$ we have
\begin{align*}
A^{(j,2)}_{s,s_0} \les \sup_{n \in \Z^{\ast}}  |n|^{2s_0-4s-2\rho(1-\nu_c-\varepsilon)+2} \les 1
,
\end{align*}
if $2s_0-4s-2\rho(1-\nu_c-\varepsilon)+2 \le 0$, which is equivalent to (a).

By (iii) of Lemma \ref{lem_res}, and \eqref{A219}, 
\begin{align*}
A^{(1,3)}_{s,s_0} \les \sup_{n_1 \in \Z^{\ast}}  |n_1|^{-4s-4\rho} \sum_{|n| \ll |n_1|} |n|^{2s_0-2\rho+2} \les 1
,
\end{align*}
if $-4s-4\rho+[2s_0-2\rho+3]_{\delta} \le 0$, which follows from (a) and (b).

By (iii) of Lemma \ref{lem_res} and \eqref{A219},
\begin{align*}
A^{(2,3)}_{s,s_0} \les   \sup_{n_1 \in \Z^{\ast}}  |n_1|^{-4s-6\rho} \sum_{|n| \ll |n_1|} |n|^{2s_0+2} \les 1
,
\end{align*}
if $-4s-6\rho+[2s_0+3]_{\delta} \le 0$,
which follows from (a) and (b).

By (iii) of Lemma \ref{lem_res} and \eqref{A319}, 
\begin{align*}
A^{(1,4)}_{s,s_0} \les \sup_{n \in \Z^{\ast}}   |n|^{2s_0-2s-6\rho+2}\sum_{|n_1| \ll |n|} |n_1|^{-2s} \les 1
,
\end{align*}
if $2s_0-2s-6\rho+2+[-2s+1]_{\delta} \le 0$,
which follows from (a) and (c).

By (iii) of Lemma \ref{lem_res} and \eqref{A319},
\begin{align*}
A^{(2,4)} \les \sup_{n \in \Z^{\ast}}  |n|^{2s_0-2s-4\rho+2} \sum_{|n_1| \ll |n|}  |n_1|^{-2s-2\rho}  \les 1
,
\end{align*}
if $2s_0-2s-4\rho+2+[-2s-2\rho+1]_{\delta}\le 0$, which follows from (a) and (c).

By (iii) of Lemma \ref{lem_res} and \eqref{A319}, 
\begin{align*}
A^{(j,5)}_{s,s_0} \les  \sup_{n\in \Z^{\ast}}   |n|^{2s_0-2s-6\rho+2} \sum_{|n_2| \ll |n|} |n_2|^{-2s} \les 1
,
\end{align*}
 if $2s_0-2s-6\rho+2+[-2s+1]_{\delta} \le 0$,
which follows from (a) and (c).
We have therefore concluded the proof by assuming $\ft{f}_1(0)=\ft{f}_2(0)=0$. In the general case,  for $j=1,2$, in view of \eqref{eq_bi_smooth} we have 
\begin{equation}\label{SSum}
\begin{split}
\|X^{j}_{t,r} (&f_1, f_2) \|^2_{H^{s_0}}\\
& \les \ind_{\{j=1\}}\cdot \sum_{n\in\Z^*}\bigg( \sum_{\substack{n_1\in \Z,n_2 \in\Z^*\\n =n_1+n_2}}\ind_{\{n_1=0\}}\cdot \frac{|n|^{s_0+1}|\Phi^{w}_{t,r}(\Xi^{(j)}(\bar n))|}{\jb{n_1}^s\jb{n_2}^s}|\jb{n_1}^s\ft{f}_1(n_1)| |\jb{n_2}^s\ft{f}_2 (n_2)|\bigg)^2\\
&\phantom{X}+ \sum_{ n\in\Z^*}\bigg( \sum_{\substack{n_1 \in\Z^*, n_2\in \Z\\n = n_1+n_2}}\ind_{\{n_2=0\}}\cdot \frac{|n|^{s_0+1}|\Phi^{w}_{t,r}(\Xi^{(j)}(\bar n))|}{\jb{n_1}^s\jb{n_2}^s}|\jb{n_1}^s\ft{f}_1(n_1)| |\jb{n_2}^s\ft{f}_2 (n_2)|\bigg)^2\\
&\phantom{X}+ \sum_{ n\in\Z^*}\bigg( \sum_{\substack{n_1, n_2 \in\Z^*\\n = n_1+n_2}}\frac{|n|^{s_0+1}|\Phi^{w}_{t,r}(\Xi^{(j)}(\bar n))|}{\jb{n_1}^s\jb{n_2}^s}|\jb{n_1}^s\ft{f}_1(n_1)| |\jb{n_2}^s\ft{f}_2 (n_2)|\bigg)^2\\
&=:S^{(j)}_1+S^{(j)}_2+S^{(j)}_3. 
\end{split}
\end{equation}
Note that $S^{(j)}_3$ is controlled in \eqref{a10ineq}. 
For $S^{(j)}_2$, by \eqref{res2intro} and arguing similarly to \eqref{a10ineq},  we get the bound
\begin{equation}
\begin{split}
S^{(j)}_2& \les \sum_{ n\in\Z^*}\bigg( \sum_{\substack{n_1\in \Z^*, n_2 \in\Z\\n =n_1+n_2}}\ind_{\{n_2=0\}}\cdot\frac{|n|^{s_0+1}|\Phi^{w}_{t,r}(\Xi^{(j)}(\bar n))|}{\jb{n_1}^s\jb{n_2}^s}|\jb{n_1}^s\ft{f}_1(n_1)| |\jb{n_2}^s\ft{f}_2 (n_2)|\bigg)^2\\
& \les \| \Phi^w \|^2_{\mathcal{W}^{\rho, \gamma}_{T}} |t - r|^{2\gamma} \| f_1 \|^2_{H^s (\T)} \| f_2 \|^2_{H^s (\T)} \\
&\phantom{XXX} \times \bigg(\sup_{n \in \Z^{\ast}} \sum_{\substack{n_1\in \Z^*, n_2 \in\Z\\n = n_1+n_2}}\ind_{\{n_2=0\}}\cdot  \frac {|n|^{2s_0+2}|n_1|^{-2s}\jb{n_2}^{-2s}}{|\Xi^{(j)} (\bar{n})|^{2\rho}}\bigg)\\
& \sim  \|\Phi^w \|^2_{\mathcal{W}^{\rho, \gamma}_{T}} |t - r|^{2\gamma} \| f_1 \|^2_{H^s (\T)} \| f_2 \|^2_{H^s (\T)} \bigg(\sup_{n \in \Z^{\ast}} |n|^{2(s_0-s)+2-6\rho}\bigg)\\
&\les \| \Phi^w \|^2_{\mathcal{W}^{\rho, \gamma}_{T}} |t - r|^{2\gamma} \| f_1 \|^2_{H^s (\T)} \| f_2 \|^2_{H^s (\T)},
\end{split}
\label{S1}
\end{equation}
provided 
\begin{align}\label{cond_mean}
s_0-s+1-3\rho\le 0.
\end{align}
If $j=1$ the same estimate holds for $S^{(j)}_1$, while if  $j=2$ then $S^{(j)}_1=0$ by definition so we don not have additional conditions. As a result, we obtain \eqref{bi_nonlinsmooth} if \eqref{condabc} and \eqref{cond_mean} hold.

We now claim that the condition \eqref{cond_mean} is already included in \eqref{condabc}.  In order to see this, we consider two cases. If $s_0\le s$, then \eqref{cond_mean} holds since $\rho\ge \frac{1}{2}$. If $s_0>s$, then \eqref{condabc} is equivalent to \eqref{nsmooth}. Then, the conclusion follows by noting that 
\begin{align*}
s-1+3\rho\ge 
\begin{cases} 2s+\rho(1-\nu_c-\eps)-1\quad \text{if}\ s\le \rho(2+\nu_c+\eps),\\
s-1+2\rho.
\end{cases} 
\end{align*}
Hence, by combining \eqref{a10ineq}, \eqref{S1}, \eqref{SSum} we obtain \eqref{bi_nonlinsmooth} under the assumptions in \eqref{condabc}.
\qed

\begin{remark}\label{ind_large}\rm
 Note that, if $s=s_0$ and $1\le \nu_c<\infty$,  Lemma \ref{lem_res}-(ii) does not provide a uniform nontrivial lower bound for $\jb{\Xi^{(j)}(\bar{n})}$. Indeed, since there are regimes where $|\Xi^{(j)}(\bar{n})|$ can be arbitrarily close to zero, we are only  reduced to employ the trivial lower bound $\jb{\Xi^{(j)}(\bar{n})}\ge 1$.   In this case,  it is easy to verify that the  bilinear estimates \eqref{bi_nonlinsmooth} hold by assuming $\rho\ge \frac{1}{2}$ and $s\ge 1$. This condition shows that in this case the size $\rho$ of the irregularity does not provide any smoothing and the well-posedness result in Proposition \ref{PROP:main} alines  with its unmodulated counterpart, cf., \cite[Theorem 1]{LWP_diophantine}.
  %by arguing as in the proof of Lemma \ref{lem_nonlinsmooth}, we would need to control from above terms of the form 
% \begin{align*}
% A^{(j)} &:= \sup_{n \in \Z^{\ast}} \sum_{\substack{n_1, n_2 \in\Z^*\\n = n_1+n_2}} \ind_{B^{(j)}_2} \frac{|n|^{2s+2}}{|n_1|^{2s}|n_2|^{2s}},\quad j=1,2, 
% \end{align*}
%where $B^{(j)}_2$ is defined in \eqref{BB15}. Then, it's easy to see that 
\end{remark}

 Remark \ref{b11} will allow us to obtain local well-posedness for the coupled modulated system, as well as nonlinear smoothing, see Proposition \ref{PROP:main} (i) and (iii). The next lemma is instead related to persistency of regularity. 
\begin{lemma}\label{pers}
    Given $\rho\geq \frac{1}{2}$,  $\frac{1}{2}<\gamma<1$, $c$ as in \eqref{roots1}, $\nu_c$ as in Definition \ref{inf_index}, $0<\eps<1-\nu_c$, and $T>0$, let  $w$ be $(\rho,\g)$-irregular on $[0, T]$ in the sense of Definition~\ref{DEF:ir}. If  $s, s_0\in \R$ satisfy
\begin{align}
\label{condX1 persistenceofreg}
 s\ge 1-\rho(1-\nu_{c}-\eps), \quad s_0 > s,
\end{align}
then  $X^j$ belongs to $\cY^{s, s_0, \g}_2([0, T]\times \T)$
with the bound 
\begin{align*}
\|X^j\|_{\cY^{s,s_0,  \g}_2(T)} \les \|\Phi^w\|_{\W^{\rho,\g}_T}.
\end{align*}
\end{lemma}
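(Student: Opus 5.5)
We reduce to Lemma \ref{LEM:OBS1}-(ii). It suffices to decompose each driver as $X^j=X^{j,1}+X^{j,2}$ with $X^{j,k}\in\cL^{s,s_0}_{2,k}$ and
\[
\|X^{j,k}_{t,r}\|_{\cL^{s,s_0}_{2,k}}\les\|\Phi^w\|_{\W^{\rho,\g}_T}|t-r|^\g .
\]
The natural splitting is by which input carries the higher frequency. We set $X^{j,1}_{t,r}(f_1,f_2)$ to be the part of \eqref{F_X}--\eqref{F_X2} restricted to $\{|n_1|\ge|n_2|\}$, and $X^{j,2}$ the complementary part $\{|n_1|<|n_2|\}$. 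On the support of $X^{j,1}$ we have $|n|\les|n_1|$, so $\jb{n}^{s_0-s}\les\jb{n_1}^{s_0-s}$ because $s_0>s$. Symmetrically, on the support of $X^{j,2}$ we have $\jb n^{s_0-s}\les\jb{n_2}^{s_0-s}$.

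Using this, the $H^{s_0}$ norm of $X^{j,1}_{t,r}(f_1,f_2)$ is controlled by the same Fourier-multiplier sum \eqref{eq_bi_smooth} with $s_0$ replaced by $s$, with $f_1$ replaced by $\jb{\dx}^{s_0-s}f_1$, and with $f_2$ kept in $H^s$. The analogous statement holds for $X^{j,2}$ with the roles of $f_1,f_2$ exchanged. The multiplier then only involves the sharp-cutoff indicator times the kernel $|n|^{2s+2}|n_1|^{-2s}|n_2|^{-2s}|\Xi^{(j)}|^{-2\rho}$. Since the restriction to a subregion only shrinks the sums $A^{(j,k)}_{s,s}$ in \eqref{A19}--\eqref{A319}, the bounds from the proof of Lemma \ref{lem_nonlinsmooth} with $s_0=s$ apply verbatim. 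By Remark \ref{b11}-(i), these hold exactly under \eqref{condX1 persistenceofreg} together with $\rho\ge\frac12$. The zero-mode terms $S^{(j)}_1,S^{(j)}_2$ are handled the same way via \eqref{cond_mean} with $s_0=s$, which reads $1-3\rho\le0$ and is automatic. For $j=2$ we keep the mean-zero assumption on $f_1$, as in Remark \ref{rem_0}.

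The point to be careful with is that the reduction $\jb n^{s_0-s}\les\jb{n_k}^{s_0-s}$ uses only $|n|\le|n_1|+|n_2|\le2\max(|n_1|,|n_2|)$, so it holds uniformly without any case analysis. Moreover, Lemma \ref{lem_bi} is applied with $|F|$ nonnegative, so inserting the indicator and the weight ratio $\jb n^{s_0-s}/\jb{n_k}^{s_0-s}\les1$ is harmless. The main obstacle I expect is verifying that each region $B_k$ is compatible with the choice of sup variable in \eqref{A19}--\eqref{A319} after the weight transfer. However, because we bound the ratio pointwise by a constant before invoking Lemma \ref{lem_bi}, the computation of Lemma \ref{lem_nonlinsmooth} at $s_0=s$ transfers unchanged. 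Combining the two pieces and applying Lemma \ref{LEM:OBS1}-(ii) yields $X^j\in\cY^{s,s_0,\g}_2(T)$ with the stated bound.
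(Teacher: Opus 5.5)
Your proposal is correct and follows essentially the same route as the paper. Both split $X^j$ according to $|n_1|\ge|n_2|$ versus $|n_1|<|n_2|$, and use $|n|\le 2\max(|n_1|,|n_2|)$ together with $s_0>s$ to dominate the $H^{s_0}$-kernel pointwise by the $s_0=s$ kernel. Both then invoke Lemma \ref{lem_nonlinsmooth} at $s=s_0$ (via Remark \ref{b11}-(i)) and conclude with Lemma \ref{LEM:OBS1}-(ii). The only difference is cosmetic: you bound the complementary piece directly as a sub-sum instead of appealing to symmetry, which is slightly cleaner for $j=2$, where $\Xi^{(2)}$ is not symmetric in $(n_1,n_2)$.
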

\begin{proof}
In analogy with the proof of Lemma \ref{lem_nonlinsmooth}, we consider the case of $f_1,f_2$ having zero mean,  the other case being similar. Furthermore, we focus on  $j=1$, the case $j=2$ being analogous. 

 Let us consider the driver $X^{1,1}$ defined by restricting the contribution of $X^{1}$ to $|n_1|\ge |n_2|$. Namely, 
 \begin{align}\label{X11}
 2\mathcal{F}(X^{1,1}_{t,r}(f_1,f_2)(n)=in\sum_{ \substack{n_1, n_2 \in \Z^*\\n = n_1+n_2\\ |n_1|\ge |n_2|}}\Phi^{w}_{t,r}  (\Xi^{(1)}(\bar n))  \ft f_1(n_1)  \ft f_2 (n_2).
 \end{align}
  Thus, in view of Lemma \ref{LEM:OBS1}-(ii), it is enough to prove that 
  \begin{align}
\|X^{1,1}_{t,r}\|_{\cL^{s, s_0}_{2,1}(H^s(\T))}
\les 
\|\Phi^w\|_{\W^{\rho,\g}_T}
|t-r|^{\g}, \label{bound:persistenceofreg 1}  \\
\|X^{1}_{t,r}-X^{1,1}_{t,r}\|_{\cL^{s, s_0}_{2,2}(H^s(\T))}
\les 
\|\Phi^w\|_{\W^{\rho,\g}_T}
|t-r|^{\g}. \label{bound:persistenceofreg 2} 
\end{align}
By symmetry, it is also sufficient to prove \eqref{bound:persistenceofreg 1}. By \eqref{X11} and arguing as in the proof of Lemma \ref{lem_nonlinsmooth}  we obtain
\begin{align}
\|  X^{1,1}_{t,r} (f_1, f_2) \|^2_{H^{s_0}} \les
\sum_{k=1}^{5}  \widetilde{A}^{(1,k)}_{s,s_0}
\| \Phi^w \|^2_{\mathcal{W}^{\rho, \gamma}_{T}} |t - r|^{2\gamma} \| f_1 \|^2_{H^{s_0} (\T)} \| f_2 \|^2_{H^s (\T)},
\label{a15ineq}
\end{align}
%In particular, by Lemma \ref{lem_bi}
%\begin{align}
%\text{R.H.S. of \eqref{eq_bi_smooth}} \les  \widetilde{A}^{(1,k)}
%\| \Phi^w \|^2_{\mathcal{W}^{\rho, \gamma}_{T}} |t - r|^{2\gamma} \| f_1 \|^2_{H^{s_0} (\T)} \| f_2 \|^2_{H^s (\T)},
%\label{a15ineq}
%\end{align}
where \begin{align}
	\widetilde{A}^{(1,k)}_{s,s_0} &:= \sup_{n \in \Z^{\ast}} \sum_{\substack{n_1, n_2 \in\Z^*\\n = n_1+n_2\\ |n_1|\ge |n_2|}} \ind_{B^{(1)}_k} \frac {|n|^{2s_0+2}|n_1|^{-2s_0}|n_2|^{-2s}}{|\Xi^{(1)} (\bar{n})|^{2\rho}}\quad \text{if}\ k=1,2;
\label{A159}\\
\widetilde{A}^{(1,k)}_{s,s_0}&:= \sup_{n_1 \in \Z^{\ast}} \sum_{\substack{n, n_2 \in\Z^*\\n = n_1+n_2\\ |n_1|\ge |n_2|}} \ind_{B_k} \frac {|n|^{2s_0+2}|n_1|^{-2s_0}|n_2|^{-2s}}{|\Xi^{(1)} (\bar{n})|^{2\rho}}\quad \text{if}\ k=3;
\label{A2519}\\
\widetilde{A}^{(1,k)}_{s,s_0} &:= \sup_{n \in \Z^{\ast}} \sum_{\substack{n_1, n_2 \in\Z^*\\n = n_1+n_2\\ |n_1|\ge |n_2|}} \ind_{B_k} \frac {|n|^{2s_0+2}|n_1|^{-2s_0}|n_2|^{-2s}}{|\Xi^{(1)} (\bar{n})|^{2\rho}}\quad \text{if}\ k=4,5,
\label{A3519}
\end{align} 
and $B_k$, $B^{(1)}_k$ are defined as in \eqref{BB15}. 
Thus, \eqref{bound:persistenceofreg 1} follows once we prove that 
\begin{align}\label{Atilde16}
\widetilde{A}^{(1,k)}_{s,s_0}\les 1.
\end{align} 
To do so, we note that for $s_0>s$ and $|n_1|\ge |n_2|$ we clearly have 
\begin{align*}
 \frac {|n|^{2s_0+2}|n_1|^{-2s_0}|n_2|^{-2s}}{|\Xi^{(1)} (\bar{n})|^{2\rho}}\les  \frac {|n|^{2s+2}|n_1|^{-2s}|n_2|^{-2s}}{|\Xi^{(1)} (\bar{n})|^{2\rho}},
\end{align*}
which implies that $\widetilde{A}^{(1,k)}_{s,s_0}\les {A}^{(1,k)}_{s,s}$, where  ${A}^{(1,k)}_{s,s}$ is defined as in \eqref{A19}, \eqref{A219} and \eqref{A319}. As a consequence, in view of Remark \ref{b11}, we derive \eqref{Atilde16} under the assumption \eqref{condX1}. 
We have therefore concluded the proof by assuming $\ft{f_1}(0) =  \ft{f_2}(0) = 0$. In the general case, we can argue as in the proof of Lemma \ref{lem_nonlinsmooth}.

%By (i) of Corollary \ref{cor_res} with $\lambda=1$
%begin{align*}
%A^{(1,1)} \les \sup_{n \in \Z^{\ast}}  |n|^{-2s-4\rho+2} \sum_{|n_1| \sim |n|} 1 \les 1
%\end{align*}
%if $-2s-4\rho+3 \le 0$, which follows from \eqref{condX1 persistenceofreg}.

%By (ii) of Corollary \ref{cor_res} with $\lambda=1$,
%\begin{align*}
%A^{(1,2)} \les \sup_{n \in \Z^{\ast}}  |n|^{-2s-2\rho(1-\nu_c-\varepsilon)+2} \les 1
%\end{align*}
%if $-2s-2\rho(1-\nu_c-\varepsilon)+2 \le 0$, which is equivalent to \eqref{condX1 persistenceofreg}.

%By (iii) of Corollary \ref{cor_res} with $\lambda=1$,
%\begin{align*}
%A^{(1,3)} \les \sup_{n_1 \in \Z^{\ast}}  |n_1|^{-4s-4\rho} \sum_{|n| \ll |n_1|} |n|^{2s_0-2\rho+2} \les 1
%\end{align*}
%if $-2s_0-2s-4\rho+[2s_0-2\rho+3]_{\delta} \le 0$, which again follows from \eqref{condX1 persistenceofreg}.

%Note that, under the restriction $|n_1|\ge |n_2|$, we have that $A^{(1,4)}=0$. Similarly, by (iii) of Lemma \ref{lem_res} with $\lambda=1$,
%\begin{align*}
%A^{(1,4)} \les \sup_{n \in \Z^{\ast}}   |n|^{2s_0-2s-6\rho+2}\sum_{|n_1| \ll |n|} |n_1|^{-2s} \les 1
%\end{align*}
%if $2s_0-2s-6\rho+2+[-2s+1]_{\delta} \le 0$,
%which follows from (a) and (c).

%By (iii) of Lemma \ref{lem_res} with $\lambda=1$,
%\begin{align*}
%A^{(1,5)} \les  \sup_{n\in \Z^{\ast}}   |n|^{-6\rho+2} \sum_{|n_2| \ll |n|} |n_2|^{-2s} \les 1
%\end{align*}
%for $j=1,2$,
%if $-6\rho+2+[-2s+1]_{\delta} \le 0$,
%which follows \eqref{condX1 persistenceofreg}.
\end{proof}

\begin{lemma}\label{lem_galapprox}
Let $j=1,2$, $N\in \N$,  $\frac{1}{2}<\gamma<1$ and $0<\eps<1-\nu_c$. Let $X^{j}$ as in \eqref{driver1} and $X^{j,N}$ be the drivers defined in \eqref{trunc}. Assume that $\rho, s \in \R$ satisfy
\begin{align}
\label{galapprox_cond}
\rho > \frac{1}{2}, \qquad 
s > 1 - \rho (1-\nu_{c} - \eps).
\end{align}
%Moreover, we assume $\ft{f}_1(0)=0$ for $j=2$.
Then $X^{j,N}$ converges to $X^{j}$ in 
 $  \cX^{s, \g}_2([0, T]\times \T)$, as $N \to \infty$.
\end{lemma}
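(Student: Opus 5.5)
By Lemma \ref{LEM:OBS1}-(iv), it suffices to prove that
\[
\|X^{j,N}_{t,r}-X^{j}_{t,r}\|_{\cL_2(H^s(\T))}\les o(1)\,|t-r|^{\g}
\]
as $N\to\infty$, uniformly in $0\le r<t\le T$.

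First, by \eqref{trunc} and \eqref{F_X}--\eqref{F_X2}, the Fourier coefficients of $X^{j,N}_{t,r}(f_1,f_2)$ are those of $X^j_{t,r}(f_1,f_2)$ multiplied by $\ind_{\{|n|\le N,\,|n_1|\le N,\,|n_2|\le N\}}$. Hence the difference $X^{j}-X^{j,N}$ is supported on the frequency set where $\max(|n|,|n_1|,|n_2|)>N$. We then repeat the proof of Lemma \ref{lem_nonlinsmooth} with $s_0=s$, inserting this indicator. Using the splitting into $B^{(j)}_1,B^{(j)}_2,B_3,B_4,B_5$ from \eqref{BB15}, together with the zero-frequency pieces $S^{(j)}_1,S^{(j)}_2$, and applying Lemma \ref{lem_bi} with $\ld=1$, $b=0$, we reduce the claim to showing that the truncated analogues of $A^{(j,k)}_{s,s}$ in \eqref{A19}--\eqref{A319} are $o(1)$, that is,
\[
\sup \sum \ind_{\{\max(|n|,|n_1|,|n_2|)>N\}}\,\ind_{B_k}\,\frac{|n|^{2s+2}|n_1|^{-2s}|n_2|^{-2s}}{|\Xi^{(j)}(\bar n)|^{2\rho}}\longrightarrow 0 .
\]

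The main step is to obtain a negative power of the largest frequency, which is why the hypotheses \eqref{galapprox_cond} are strict. In each region, the proof of Lemma \ref{lem_nonlinsmooth} yields bounds of the form $\sup_{n}|n|^{-\kappa}$ or $\sup_{n_1}|n_1|^{-\kappa}$, where the exponent $\kappa$ is the amount by which the corresponding condition in \eqref{condabc} fails to be an equality. When $s>1-\rho(1-\nu_c-\eps)$ and $\rho>\tfrac12$, each of these exponents is strictly negative; this can also be arranged by shrinking $\eps$ slightly, since $\eps$ is free. Specifically:
\begin{itemize}
\item On $B^{(j)}_2$, the bound is $|n|^{2s_0-4s-2\rho(1-\nu_c-\eps)+2}$ with $s_0=s$, which equals $|n|^{-2(s-1+\rho(1-\nu_c-\eps))}$. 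Restricted to $|n|\gtrsim N$, this is $O(N^{-\kappa})$ with $\kappa>0$.
\item On $B^{(j)}_1$, $B_4$ and $B_5$, the largest frequency is comparable to $|n|$, so the same argument applies directly.
\item On $B_3$, the largest frequency is $|n_1|\sim|n_2|>N/2$, and the supremum is taken over $n_1$, so the bound $|n_1|^{-4s-4\rho}\sum_{|n|\ll|n_1|}(\cdots)$ likewise gains a negative power of $N$.
\item For $S_1^{(j)},S_2^{(j)}$, the condition \eqref{cond_mean} holds strictly because $\rho>\tfrac12$. The mean-zero restriction on $f_1$ for $j=2$ is handled as in Remark \ref{rem_0}.
\end{itemize}

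The step I expect to be the main obstacle is the bookkeeping at the borderline cases where the $[\cdot]_\delta$ convention of \eqref{posa} was used, namely logarithmic sums. There I would first check that the strict inequalities give enough slack to absorb a $\log N$ factor. If they do not, I would split off a small power $N^{-\theta}$ via $|n|^{-\theta}\le N^{-\theta}$, at the cost of replacing $\rho$ by $\rho-\theta'$ and $\eps$ by $\eps+\theta'$, which is permitted by the strictness of \eqref{galapprox_cond}. Finally, uniformity in $(t,r)$ is automatic, because every bound carries the factor $\|\Phi^w\|_{\W^{\rho,\g}_T}|t-r|^\g$.
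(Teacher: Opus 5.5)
Your proposal is correct and follows essentially the paper's argument. You restrict the difference $X^{j}-X^{j,N}$ to the frequency set $\max(|n|,|n_1|,|n_2|)>N$, rerun the region-by-region bounds of Lemma \ref{lem_nonlinsmooth} with $s_0=s$, use the strict inequalities in \eqref{galapprox_cond} to extract a negative power of $N$, and conclude via Lemma \ref{LEM:OBS1}-(iv). The paper implements the gain through $\ind_{\{\max(|n|,|n_1|,|n_2|)>N\}}\les N^{-\eta}|n_{\max}|^{\eta}$ and then requires $\rho\ge\frac12+\frac\eta2$ and $s\ge 1-\rho(1-\nu_c-\eps)+\eta$, which is exactly the fallback device you describe in your last paragraph.
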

\begin{proof}
Similarly to the proofs of Lemmas \ref{lem_nonlinsmooth} and \ref{pers}, we assume first that $\ft f_1(0)=\ft f_2(0)=0$. 
  From  \eqref{F_X}, \eqref{F_X2}, \eqref{trunc}, we have
    \begin{align*}
& \F\big( X^{j}_{t,r}(f_1,f_2)\big)(n) - \F\big( X^{j,N}_{t,r}(f_1,f_2)\big)(n)\\
& \quad =in \sum_{ \substack{n_1, n_2 \in \Z^*\\n = n_1+n_2}}
 \ind_{\{\max(|n|, |n_1|, |n_2| )>  N\}}
\Phi^w_{t,r}(\Xi^{(j)}(\bar n))
\ft f_1(n_1) \ft f_2(n_2).
    \end{align*}
Noting that
\begin{equation}
 \ind_{\{\max(|n|, |n_1|, |n_2|) >  N\}} 
 \les N^{-\eta} 
\max(|n|, |n_1|, |n_2| )^{\eta}=: N^{-\eta} |n_{\text{max}}|^{\eta}, \quad \eta>0,
\label{K8}
\end{equation}
and arguing as in \eqref{a10ineq} yields
\begin{equation*}
\begin{split}
\|  (X^{j,N}_{t, r} - &X^{j}_{t,r})(f_1, f_2) \|^2_{H^{s}}\\
 &\les N^{-2\eta}
  \sum_{ n\in\Z^*}\bigg( \sum_{\substack{n_1, n_2 \in\Z^*\\n = n_1+n_2}}\frac{|n|^{s+1}|\Phi^{w}_{t,r}(\Xi^{(j)}(\bar n))| |n_{\textup{max}}|^{\eta}}{\jb{n_1}^s\jb{n_2}^s}|\jb{n_1}^s\ft{f}_1(n_1)| |\jb{n_2}^s\ft{f}_2 (n_2)|\bigg)^2\\
 &\les  N^{-2\eta}\sum_{k=1}^{5}\bar{A}^{(j,k)}_{s}
\| \Phi^w \|^2_{\mathcal{W}^{\rho, \gamma}_{T}} |t - r|^{2\gamma} \| f_1 \|^2_{H^s (\T)} \| f_2 \|^2_{H^s (\T)},
\end{split}
\end{equation*}
where 
\begin{align}
\bar{A}^{(j,k)}_{s} &:= \sup_{n \in \Z^{\ast}} \sum_{\substack{n_1, n_2 \in\Z^*\\n = n_1+n_2}} \ind_{B^{(j)}_k} \frac {|n|^{2s+2}|n_1|^{-2s}|n_2|^{-2s}  |n_{\text{max}}|^{2\eta}}{|\Xi^{(j)} (\bar{n})|^{2\rho}}\quad\! \!\text{if}\ (j,k)\in\{(1,1),(2,1),(1,2),(2,2)\};\notag\\
\bar{A}^{(j,k)}_{s}&:= \sup_{n_1 \in \Z^{\ast}} \sum_{\substack{n, n_2 \in\Z^*\\n = n_1+n_2}} \ind_{B_k} \frac {|n|^{2s+2}|n_1|^{-2s}|n_2|^{-2s}  |n_{\text{max}}|^{2\eta}}{|\Xi^{(j)} (\bar{n})|^{2\rho}}\quad\!\! \text{if}\ (j,k)\in \{(1,3),(2,3)\};
\label{AA219}\\
\bar{A}^{(j,k)}_{s} &:= \sup_{n \in \Z^{\ast}} \sum_{\substack{n_1, n_2 \in\Z^*\\n = n_1+n_2}} \ind_{B_k} \frac {|n|^{2s+2}|n_1|^{-2s}|n_2|^{-2s}  |n_{\text{max}}|^{2\eta}}{|\Xi^{(j)} (\bar{n})|^{2\rho}}\quad \text{if}\ (j,k)\in\{(1,4),(2,4),(1,5),(2,5)\}
\label{AA319},
\end{align} 
and $B_k$, $B^{(j)}_k$ are defined as in \eqref{BB15}. Then, by performing an analogous computation to the one in the proof of Lemma \ref{bi_nonlinsmooth}, one can see that $\bar{A}^{(j,k)}_{s}\les 1$ if 
\begin{align*}
    \rho  \geq \frac{1}{2}+\frac{\eta}{2}, \quad \text{and}\quad
s  \geq 1 - \rho (1-\nu_{c} - \eps)+\eta. 
\end{align*}
%In particular, by chooching $\eta$ sufficiently small we conclude in view of \eqref{galapprox_cond}. 
The general case where only $\ft f_1(0)=0$ (for $j=2$) is analogous. We have therefore proved that, by assuming \eqref{galapprox_cond}, there exists $\eta$ sufficiently small such that
\begin{equation}\label{bi_galapprox}
\|X^{j,N}_{t, r} - X^{j}_{t,r}\|_{\cL_2(H^s(\T))}
\les 
N^{-\eta} \|f_1\|_{H^s}\|f_2\|_{H^s}\|\Phi^w\|_{\W^{\rho,\g}_T}|t-r|^{\g}.
\end{equation}
In particular, by Lemma \ref{LEM:OBS1}-(iv), we conclude that 
$X^{j,N}$ converges to $X^{j}$ in 
 $  \cX^{s, \g}_2([0, T]\times \T)$.
\end{proof}

\subsection{Proof of Theorems \ref{THM:11} (i)-(ii)}

Finally, we have all the ingredients to prove local well-posedness, as well as nonlinear smoothing and persistency of regularity for \eqref{cmkdv}. To this aim, let us consider the following system of Young differential equations:
\begin{equation}
\label{YDE1}
\begin{cases}
\uu(t) = u_0 + \I^{X^{1}}(\vv,\vv)(t),\\
\vv(t)=v_0+\I^{X^{2}}(\uu,\vv)(t),
\end{cases}
\end{equation}
where $X^{j}$, $j=1,2$ are as in \eqref{driver1} and 
\begin{align}\label{z_vex_def}
\vec{z}_0 \coloneqq (u_0,v_0)\in H^s_0(\T)\times H^s(\T).\end{align}

\noi To simplify the notation, we denote  $\vec{X}:=(X^1,X^2)$ and 
\begin{align}\label{vec_I}
\I^{\vec X}(\vec{\bf z})(t)
:=(\I^{X^{1}}(\vv,\vv)(t),\,\I^{X^{2}}(\uu,\vv)(t)).
\end{align}
As a result, \eqref{YDE1} is now rewritten as follows:
\begin{align}\label{sYDE}
\vec{\bf z}(t)=\vec{z}_0+\I^{\vec X}(\vec{\bf z})(t).
\end{align}

\begin{proposition}
\label{PROP:main}
Given $\alpha\in (0,4)\setminus\{1\}$,  $\rho \ge\frac 12$,  $\frac12< \g < 1$, $c$ as in \eqref{roots1}, $\nu_c$ as in Definition \ref{inf_index}, $0<\eps<1-\nu_{c}$, and $T> 0$, let
$w$ be $(\rho,\g)$-irregular on $[0, T]$ in the sense of Definition~\ref{DEF:ir}, and $\vec{z_0}$ as in \eqref{z_vex_def}. Then, the following hold:

\noi \textup{(i) (local well-posedness)}
if $s$ satisfies \eqref{condX1} then 
 the system of nonlinear Young differential equations \eqref{sYDE} with driver $\vec{X}$ is locally well-posed in $\H^s(\T)$.
More precisely, given  $\vec{z_0}\in \H^s(\T)$, 
there exist $C_0 >0$ and   $\ta>0$, independent of $\vec{z}_0$ and $\vec{X}$, 
and 
 a unique solution $\vec{\bf z} \in \cC^\g([0, \tau]; \H^s(\T))$
   to 
 \eqref{sYDE}  
with $\vec{\bf z}|_{t= 0} = \vec{z}_0$, where the local existence time $\tau = 
\tau\big(\|\vec{z}_0\|_{\H^s(\T)}, \|\vec{X}\|_{\cX^{s, \g}_2(T)\times \cX^{s, \g}_2(T)}\big) \in (0, 1]$
satisfies 
\begin{align}
\tau \ge  C_0 
\Big(\|\vec{X}\|_{\cX^{s, \g}_2(T)\times \cX^{s, \g}_2(T)} (1+ \|\vec{z}_0\|_{\H^s})\Big)^{-\ta}.
\label{YD1}
\end{align}

\noi
Moreover, for every $\s>0, \gamma+\s>1$
\begin{align}
\|\!\, \vec{\bf z} \|_{\CC^\s_\tau \H^s_x}\le C_\s\|\vec{z}_0\|_{\H^s}, 
\label{YD1a}
\end{align}

\noi
while we have
\begin{align}
\| \vec{\bf z} \|_{\CC^\g_\tau \H^s_x}\les\|\vec{X}\|_{\cX^{s, \g}_2(T)\times \cX^{s, \g}_2(T)}(1+\|\vec{z}_0\|_{\H^s})^2.
\label{YD1b}
\end{align}

\smallskip

\noi
\textup{(ii) (persistence of regularity).}
In addition to the hypotheses of Part \textup{(i)},  suppose that $\vec{z}_0\in~\H^{s_0}(\T)$ for some $s_0 > s$.
Then, $\vec{X}\in \cY^{s, s_0, \g}_2([0, T]\times \T)\times \cY^{s, s_0, \g}_2([0, T]\times \T)$. Moreover, 
by possibly making $\tau > 0$ smaller by a multiplicative constant 
\textup{(}still satisfying \eqref{YD1}\textup{;} in particular $\tau > 0$
depends on the $\H^s$-norm of the initial data $\vec{z}$
but not on its $\H^{s_0}$-norm\textup{)}, 
we have $\vec{\bf z} \in \cC^\g([0, \tau]; \H^{s_0}(\T))$.

\smallskip

\noi
\textup{(iii) (nonlinear smoothing).}
In addition to the hypotheses of Part \textup{(i)}, assume that $r>0$ satisfies \eqref{nsmooth}. Then, $\vec{X} \in \cX^{s, s + r, \g}_2 ([0, T]\times \T)  $, and
 $\I^{\vec X}(\vec{\bf z}) \in \cC^\g([0, \tau]; \H^{s+r}(\T))$, 
 where $\vec{\bf z}$ is the solution to \eqref{sYDE}
 constructed in Part \textup{(i)}.

\smallskip

\noi
\textup{(iv) (convergence).}
In addition to the hypotheses of Part \textup{(i)}, 
suppose  that \eqref{galapprox_cond}  hold. Then,
% $\{X^{j,N}\}_{N \in \N}\subset 
% \cX^{s, \g}_2([0, T]\times \T)$
% converges to $X^j$ in 
% $ \cX^{s, \g}_2([0, T]\times \T)$ as $N \to \infty$.
%Then, 
by possibly making $\tau > 0$ smaller by a multiplicative constant 
\textup{(}still satisfying~\eqref{YD1}\textup{)},  the solution 
$\vec{\bf z}^N$  %\in \CC^\g([0, \tau]; H^s(\M))$
to the following Young differential equation\textup{:}
\begin{equation}
\label{YDE2}
\vec{\bf z}^N(t) = \vec{z}_0 + \I^{\vec{X^{N}}}(\vec{\bf z}^N)(t)
\end{equation}
\noi
converges as $N$ tends to infinity in $\CC^\g([0, \tau]; \H^s(\T))$ to the solution $\vec{\bf z}$ to \eqref{YDE1}
constructed in Part \textup{(i)}.
Moreover, 
given any $r > 0$, 
the rate of convergence of $\vec{\bf z}^N$ to $\vec{\bf z}$ is
uniform in $\vec{z}_0 \in B_r$ 
where
$B_r$ denotes the ball 
in $\H^s(\T)$
of radius $r > 0$ centered at the origin.
\end{proposition}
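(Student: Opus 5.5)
The plan is to reduce every part of the statement to the abstract Young-ODE theory of \cite[Proposition 3.8]{CGLLO}, applied to the vector-valued driver $\vec X=(X^1,X^2)$. The needed driver regularity comes from the bilinear estimates of this section. The vector structure plays no essential role. The map $\vec{\bf z}\mapsto \I^{\vec X}(\vec{\bf z})$ is built componentwise from bilinear drivers acting on pairs drawn from $\vec{\bf z}=(\uu,\vv)$. So Proposition \ref{PROP:young1} applies to each component with $\H^s$ in place of a single Sobolev space, as the notation $\Lip_2(\H^s;H^s)$ is designed for. The only bookkeeping point is Remark \ref{rem_0}: $X^2$ is estimated only with mean-zero first argument. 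We must therefore check that the mean of $\uu$ is preserved. This holds because $\ft{\uu}(t,0)-\ft u_0(0)$ is a limit of Riemann sums of $\F(X^1_{t_j,t_{j+1}}(\cdot))(0)=0$, the factor $in$ vanishing at $n=0$. Hence we work throughout in the closed subspace $H^s_0\times H^s$.

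For (i), Remark \ref{b11}(i) gives $X^j\in\cX^{s,\g}_2(T)$ with norm $\les\|\Phi^w\|_{\W^{\rho,\g}_T}$ under \eqref{condX1}. Fix $\s<\g$ with $\s+\g>1$. Consider the map $\Gamma(\vec{\bf z})=\vec z_0+\I^{\vec X}(\vec{\bf z})$ on a ball of $\CC^\s([0,\tau];\H^s)$ centred at the constant path $\vec z_0$. Estimates \eqref{Y2}--\eqref{Y3}, together with $\|\cdot\|_{C^\s_\tau}\le \tau^{\g-\s}\|\cdot\|_{C^\g_\tau}$, give invariance of the ball once $\tau^{\g-\s}\|\vec X\|(1+\|\vec z_0\|)$ is small. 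The contraction property needs a difference estimate for $\I^X(\vec{\bf z})-\I^X(\vec{\bf z}')$. This is the standard estimate in \cite[Lemma 3.4, Proposition 3.7]{CGLLO}, using $\Lip_2$ and the Lipschitz bound on $DX$, and it again carries a factor $\tau^{\g-\s}$. This yields \eqref{YD1} with some $\ta=\ta(\g,\s)$. The bound \eqref{YD1a} comes from the ball radius. Feeding the fixed point back into \eqref{Y3} gives \eqref{YD1b}, the $C^\g$ regularity, and uniqueness in $\CC^\g$.

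For (ii), Lemma \ref{pers} gives $\vec X\in\cY^{s,s_0,\g}_2$. We rerun the fixed point in the intersection of the $\H^s$-ball above with a large $\CC^\s\H^{s_0}$-ball, using \eqref{Jaa2}--\eqref{Jaa3}. These bounds are linear in the $\H^{s_0}$ norms with coefficients depending only on $\H^s$ norms, so the smallness condition on $\tau$ involves only $\|\vec z_0\|_{\H^s}$, up to a multiplicative constant. For (iii), Remark \ref{b11}(ii) and Lemma \ref{LEM:OBS1}(iii) give $\vec X\in\cX^{s,s+r,\g}_2$. We then repeat the construction of the Young integral (sewing lemma \ref{LEM:sew}) in $V=H^{s+r}$. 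The germ $\Theta_{t,r}=X_{t,r}(\vec{\bf z}(r))$ lies in $C^\g_2H^{s+r}$, and $\updl\Theta\in C^{\s+\g}_3H^{s+r}$ because of the $\Lip_2(\H^s;H^{s+r})$ bound and $\vec{\bf z}\in C^\s\H^s$. Hence $\I^{\vec X}(\vec{\bf z})\in C^\g H^{s+r}$.

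For (iv), Lemma \ref{lem_galapprox} and Lemma \ref{LEM:OBS1}(iv) give $X^{j,N}\to X^j$ in $\cX^{s,\g}_2$. For $N$ large the norms $\|\vec X^N\|$ are uniformly bounded, so the solutions $\vec{\bf z}^N$ exist on a common interval $\tau$ (after shrinking by a constant). We then write
\[
\vec{\bf z}^N-\vec{\bf z}=\big(\I^{\vec X^N}-\I^{\vec X}\big)(\vec{\bf z})+\I^{\vec X^N}(\vec{\bf z}^N)-\I^{\vec X^N}(\vec{\bf z}).
\]
The first term tends to zero uniformly on balls by Proposition \ref{PROP:young1}(iii). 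The second term is absorbed by the contraction estimate, giving the stated convergence and uniformity on $B_r$. I expect the main obstacle to be verifying that the difference (contraction) estimates of \cite{CGLLO} carry over to the coupled, mean-zero-restricted setting with the same exponents. All the other steps are direct consequences of the earlier lemmas.
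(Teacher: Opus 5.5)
Your proposal is correct and follows the paper's route. The paper combines Remark \ref{b11}, Lemmas \ref{LEM:OBS1}, \ref{pers}, \ref{lem_galapprox} and Proposition \ref{PROP:young1}, and then argues as in \cite[Proposition 3.8]{CGLLO}. You do the same, but spell out the contraction, persistence, smoothing and Galerkin steps, plus an explicit check that $\ft{\uu}(t,0)=\ft u_0(0)$ so the fixed point runs in $H^s_0(\T)\times H^s(\T)$, which the paper leaves to Remark \ref{rem_0}.
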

\begin{proof}
Under the assumptions \eqref{condX1}, by combining Remark \ref{b11},  Lemma \ref{LEM:OBS1} with   Proposition \ref{PROP:young1}, we infer that the nonlinear Young integrals  with drivers $X^{j}$ in the right hand side of \eqref{YDE1} are well defined in $\cC^\g([0, \tau]; \H^s(\T))$, see \cite[Remark 3.6]{CGLLO}. 
In addition, from  Lemmas \ref{lem_nonlinsmooth}, \ref{pers}, \ref{lem_galapprox} and arguing as in \cite[Proposition 3.8]{CGLLO}  we get local well-posedness, nonlinear smoothing (by further assuming \eqref{nsmooth}),  persistency of regularity,  and convergence of Galerkin approximation (by further assuming \eqref{galapprox_cond}).
\end{proof}

\begin{remark}\label{theta}\rm
(i) Note that, for the proof of Proposition \ref{PROP:main} we fix any $0<\s<\gamma$ such that $\g+\s>1$. 
Next, 
see \cite[(3.54), (3.64)]{CGLLO},
the exponent $\theta$ in \eqref{YD1} satisfies $\theta=\frac{1}{\gamma-\s}$. 
\smallskip

\noi (ii) As pointed out in \cite[Remark 3.9 (i)]{CGLLO}, by applying Proposition \ref{PROP:main} to $\vec{X}\in \cX^{s, \g}_2(\R_{+}\times \T)\times \cX^{s, \g}_2(\R_{+}\times \T)$,  we obtain the following blowup alternative; let $T_{\ast} \in (0, \infty]$ be the maximal time of existence of the solution $(\uu,\vv)$ to  \eqref{sYDE} Then, we have either
\begin{align} 
\label{BA1}
    T_{\ast} = \infty \qquad \text{or} \qquad \lim_{t \nearrow T_{\ast}} \| (\uu,\vv)(t) \|_{\H^{s}(\T)} = \infty. 
\end{align}
\end{remark}

\section{Global well-posedness}\label{sec6}
The goal of this section is to prove global well-posedness for \eqref{cmkdv}, namely Theorem \ref{THM:11} (iii). To do so, we first establish  the conservation of the $L^2$-norm
of the solution to the coupled modulated KdV system \eqref{cmkdv} constructed in Proposition \ref{PROP:main}. In particular, by combining such conservation with persistency regularity we obtain global well-posedness for $s\ge 0$ (by further assuming \eqref{condX1}); see Corollary \ref{GWP_2}.

\begin{proposition}
\label{PROP:L2}

Given $\alpha\in (0,4)\setminus\{1\}$,  $s \ge 0$,  $\rho > 0$,  $\frac 12 < \g < 1$ and $T> 0$, 
let  $w$ be $(\rho,\g)$-irregular on $[0, T]$. 
Assume that $(u,v)$ solves the modulated KdV system \eqref{cmkdv} on $\T$,  with
modulated interaction representation $(\uu,\vv)$ defined in \eqref{YDE1}
belonging to  $\CC^\g([0, T];\H^s(\T)) $.  Then, we have
\begin{align}
\| (u,v)(t) \|_{L^2\times L^2} = \|( u_0 ,v_0)\|_{L^2\times L^2}.
\label{GK0b}
\end{align}

\noi
for any $0 \le t \le T$.
\end{proposition}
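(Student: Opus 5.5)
The plan is to adapt the argument of \cite[Proposition 6.1]{CGLLO}: work entirely at the level of the modulated interaction representation and expand the $L^2$-norm along Riemann sums furnished by the sewing lemma. In the unmodulated case the conserved quantity is $\|u\|_{L^2}^2+\|v\|_{L^2}^2$. Indeed, the dispersive terms are skew-adjoint, and
\[
\int u\,\tfrac12\dx(v^2)\,dx+\int v\,\dx(uv)\,dx=-\tfrac12\int u_x v^2\,dx+\tfrac12\int u_x v^2\,dx=0 .
\]
We expect the same cancellation to hold for the drivers, pointwise in the integration variable $t'$.

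\textbf{Step 1: reduction to the interaction representation.} Since $U^w(t)$ and $U^{w_\al}(t)$ are unitary on $L^2(\T)$, it suffices to show that $\|\vec{\bf z}(t)\|_{L^2\times L^2}=\|\vec z_0\|_{L^2\times L^2}$, where $\vec{\bf z}=(\uu,\vv)$.

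\textbf{Step 2: telescoping expansion.} Fix $t$ and a partition $\Pi([0,t])=\{0=t_n<\dots<t_0=t\}$, and write $\updl\vec{\bf z}_{t_j,t_{j+1}}=\vec X_{t_j,t_{j+1}}(\vec{\bf z}(t_{j+1}))+R_{t_j,t_{j+1}}$, as in \eqref{YI} and Proposition \ref{PROP:young1}. Then
\[
\|\vec{\bf z}(t)\|^2_{L^2}-\|\vec z_0\|^2_{L^2}=\sum_j\Big(2\,\mathrm{Re}\big\langle \vec{\bf z}(t_{j+1}),\vec X_{t_j,t_{j+1}}(\vec{\bf z}(t_{j+1}))\big\rangle+2\,\mathrm{Re}\big\langle \vec{\bf z}(t_{j+1}),R_{t_j,t_{j+1}}\big\rangle+\|\updl \vec{\bf z}_{t_j,t_{j+1}}\|^2_{L^2}\Big).
\]
Because $s\ge0$, all pairings are controlled by $H^s$-norms.
\begin{itemize}
\item The last term is $O(|t_j-t_{j+1}|^{2\g})$ by the $C^\g$ bound on $\vec{\bf z}$; since $2\g>1$, its sum tends to $0$ as the mesh vanishes.
\item The remainder term is bounded by $\|\vec{\bf z}\|_{L^\infty_T\H^s}\|R\|_{C^{\s+\g}_{2,T}\H^s}|t_j-t_{j+1}|^{\s+\g}$ with $\s+\g>1$, so its sum also vanishes.
\end{itemize}
If the solution is only known to be $C^\g$ (rather than $\CC^\s$ with a separately chosen $\s$), take $\s=\g$.

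\textbf{Step 3: exact cancellation of the main term.} We will show that
\[
\mathcal{C}_{t,r}(\vec f):=\langle f_1,X^1_{t,r}(f_2,f_2)\rangle+\langle f_2,X^2_{t,r}(f_1,f_2)\rangle=0
\quad\text{for all }\vec f\in\H^s .
\]
For trigonometric polynomials, use \eqref{driver1} and unitarity to move $\uw(t')^{-1}$ and $\uwa(t')^{-1}$ onto $f_1$ and $f_2$. Setting $g_1=\uw(t')f_1$ and $g_2=\uwa(t')f_2$, the integrand becomes
\[
\tfrac12\langle g_1,\dx(g_2^2)\rangle+\langle g_2,\dx(g_1g_2)\rangle ,
\]
which vanishes by integration by parts, exactly as in the computation above. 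Integrating in $t'$ then gives zero. Equivalently, on the Fourier side the trilinear sums from \eqref{F_X}–\eqref{F_X2} cancel: the phases match because of \eqref{relphase}, and the multipliers $in$ recombine to zero after relabelling $(n,n_1,n_2)$. For general $\vec f\in\H^s$, we extend by density. The map $\vec f\mapsto\mathcal C_{t,r}(\vec f)$ is continuous on $\H^s$, since $X^j_{t,r}$ is bounded $\H^s\to H^s\subset L^2$ by Remark \ref{b11}. The one caveat is that for $X^2$ the first argument must have mean zero (Remark \ref{rem_0}); this is consistent with the conservation of the mean of $\uu$. Hence the main sum in Step 2 is identically zero, and letting the mesh go to $0$ yields \eqref{GK0b}.

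The step I expect to be the main obstacle is Step 3. Two points need care there: the sign and relabelling bookkeeping with the distinct phases $\Xi^{(1)}$ and $\Xi^{(2)}$, which are tied together by \eqref{relphase}, and the justification that the pairing identity survives passage from smooth data to rough $H^s$ data. The latter relies on the boundedness of $X^j_{t,r}$ on $H^s$ at $s\ge0$, which in the regime where the statement is applied comes from Lemma \ref{lem_nonlinsmooth}.
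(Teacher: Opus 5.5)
Your proposal is correct and follows essentially the same route as the paper. The paper also uses the exact cancellation $\langle f_1,X^1_{t,r}(f_2,f_2)\rangle+\langle f_2,X^2_{t,r}(f_1,f_2)\rangle=0$ from unitarity and integration by parts (its \eqref{zero_term}), together with the expansion of $\|(\uu,\vv)(t)\|^2_{L^2}-\|(\uu,\vv)(r)\|^2_{L^2}$, whose quadratic increment and sewing remainder from \eqref{Y2} are $O(|t-r|^{2\g})$ with $2\g>1$. The only differences are presentational: you telescope explicitly over a partition instead of stating the increment bound \eqref{GK4}, and you add the density and mean-zero justifications that the paper leaves implicit.
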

\begin{proof}
Let $j=1,2$,  $X^{j}$ be as in \eqref{driver1}. Then, from the unitarity of $\uw(t)$ and $\uwa(t)$ on $L^2(\T)$, we have
\begin{equation}
\label{zero_term}
\begin{split}
&\jb{f_1,X^{1}_{t,r}(f_2,f_2)}_{L^2}+\jb{f_2,X^{2}_{t,r}(f_1,f_2)}_{L^2}\\
&\phantom{XXXX}=\frac 12\int_{r}^{t}\int_{\T}f_1\cdot\uw(t')^{-1}\dx \big(\big(\uwa(t')f_2\big)^2\big)dxdt'\\
&\phantom{XXXXXXX}+ \int_{r}^{t}\int_{\T}f_2\cdot\uwa(t')^{-1}\dx \big(\uw(t')f_1\cdot \uwa(t')f_2\big)dxdt'\\
&\phantom{XXXX}=\frac 12\int_{r}^{t}\int_{\T}\uw(t') f_1\cdot \dx \big(\big(\uwa(t')f_2\big)^2\big)dxdt'\\
&\phantom{XXXXXXX}+ \int_{r}^{t}\int_{\T}\uwa(t') f_2\cdot \dx \big(\uw(t')f_1\cdot \uwa(t')f_2\big)dxdt'\\
&\phantom{XXXX}=\int_{r}^t \int_{\T}\dx \big(\uw(t')f_1\cdot \big(\uwa(t')f_2\big)^2\big)dxdt'=0.
\end{split}
\end{equation}
As a result, we obtain 
\begin{align}
\label{GK1}
\begin{aligned}
\| (\uu,\vv)(t) \|^2_{L^2}
&=\| (\uu,\vv)(r) \|^2_{L^2}
+2\big\langle \uu(r),X^{1}_{t,r}(\vv(r),\vv(r)) 
\big\rangle_{L^2}
\\
&\phantom{XXX}+2\big\langle \vv(r),X^{2}_{t,r}(\uu(r),\vv(r)) 
\big\rangle_{L^2}+\|\uu(t) -\uu(r)  \|^2_{L^2} \\
&\phantom{XXX}+
\|   \vv(t)-\vv(r)   \|^2_{L^2}
+R^{1}_{t,r}+R^{2}_{t,r},
\end{aligned}
\end{align}
where $R^{1}_{t,r}$, $R^{2}_{t,r}$ are defined by 
\begin{equation}\label{R_boh}
\begin{split}
R^{1}_{t,r}=2\big\langle 
\uu(r) ,  \uu(t)-  \uu(r) -X^{1}_{t,r}(\vv(r),\vv(r))\big\rangle_{L^2},\\
R^{2}_{t,r}=2\big\langle 
\vv(r) ,  \vv(t)-  \vv(r) -X^{2}_{t,r}(\uu(r),\vv(r))\big\rangle_{L^2}. 
\end{split}
\end{equation}

By Proposition \ref{PROP:young1}, 
we have 
 $\uu,\vv\in \CC^\g([0, T];  L^2(\T))$
with $\g>\frac 12$. Then, 
from \eqref{R_boh}, \eqref{YDE1} and~\eqref{Y2} in Proposition \ref{PROP:young1}
(see also \eqref{Ja3x}), we infer
\begin{align}
\begin{split}
|R^{1}_{t,r}| 
&\leq 
\|\uu(r)\|_{L^2}
\|\uu(t)- \uu(r) -X^{1}_{t,r}(\vv(r),\vv(r))\|_{L^2}
\\
&= 
\|\uu(r)\|_{L^2}
\|(\updl\I^{X^{1}}(\vv,\vv))_{t, r}  -X^{1}_{t,r}(\vv(r),\vv(r))\|_{L^2}
\\
&\le
C\big( \|(\uu, \vv)\|_{\CC^\g_{T}L^2\times L^2} \big)  \|X^{1}\|_{\cX^{s, \g}_2(T)} |t-r|^{2\g}.
\end{split}
\label{GK2}
\end{align}
Similarly, 
\begin{align}
\label{GK2bis}
|R^{2}_{t,r}|\le C\big(\|(\uu,\vv)\|_{\CC^\g_{T}L^2\times L^2}\big)  \|X^{2}\|_{\cX^{s, \g}_2(T)} |t-r|^{2\g}.
\end{align}
Furthermore, 
\begin{align}
\|\uu(t)-\uu(r)\|_{L^2}^2+\|\vv(t)-\vv(r)\|^2_{L^{2}}
\leq 
(\|\uu\|_{C^\g_T L^2}^2+\|\vv\|^2_{C^\g_T L^{2}})
|t-r|^{2\g}
\label{GK3}
\end{align}

\noi
for any  $0\leq r<t\leq T$.
Thus, by combining \eqref{GK1}, \eqref{GK2}, \eqref{GK2bis} with \eqref{GK3}, we derive

\begin{equation}
\begin{split}
&\big|
 \|  (\uu,\vv)(t)  \|^2_{L^2}-\|(\uu,\vv)(r)\|^{2}_{L^{2}}
 \big|\\
 &\phantom{XXXXXXXX}\le
 C\big(\|(\uu,\vv)\|_{\CC^\g_{T} L^2\times L^2},   \|\vec{X}\|_{\cX^{s, \g}_2(T)\times \cX^{s, \g}_2(T)}\big)
 |t-r |^{2\g}.
 \end{split}
 \label{GK4}
\end{equation}
Hence, \eqref{GK0b} follows by \eqref{GK4} and the fact that $2\g>1$. 
\end{proof}

\begin{corollary}\label{GWP_2}
 Given $\alpha\in (0,4)\setminus\{1\}$, $\rho>0$, $\frac 12 < \g < 1$ and $w$ be $(\rho,\g)$-irregular on $\R_+$. Assume that $\rho$ and $s\ge 0$ satisfy \eqref{condX1} and $\vec{z_0}$ is as in \eqref{z_vex_def}. Then, the Young differential system of equations \eqref{sYDE} is globally well posed in $\H^s(\T)$.
\end{corollary}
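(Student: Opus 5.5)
The plan is the standard iteration argument, combining the $L^2$ conservation of Proposition \ref{PROP:L2} with the local theory and the persistence of regularity in Proposition \ref{PROP:main}. First we treat $s=0$. Condition \eqref{condX1} with $s=0$ means $\rho(1-\nu_c-\eps)\ge 1$. Proposition \ref{PROP:main}-(i) then gives a unique local solution $\vec{\bf z}\in \cC^\g([0,\tau];\H^0(\T))$ of \eqref{sYDE}. Its existence time satisfies \eqref{YD1}, so $\tau$ is bounded below by a quantity that depends only on $\|\vec X\|_{\cX^{0,\g}_2(T)}$ (finite for every $T$, since $w$ is $(\rho,\g)$-irregular on $\R_+$) and on $\|\vec z_0\|_{L^2\times L^2}$.

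By unitarity of $\uw$ and $\uwa$ on $L^2$, $\|\vec{\bf z}(t)\|_{L^2}=\|(u,v)(t)\|_{L^2}$. Proposition \ref{PROP:L2} then says this norm is constant in time on every interval where the solution exists in $\cC^\g L^2$. Now fix any target time $T>0$. On $[0,T]$ the driver norm is bounded by a fixed constant, and the $L^2$ norm of the data stays equal to $\|\vec z_0\|_{L^2}$. Hence the lower bound \eqref{YD1} yields a uniform step size $\tau_T>0$.

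We then restart the YDE at times $k\tau_T$. To do this we use the time-shifted driver $X_{t+k\tau_T,r+k\tau_T}$, which has the same bounds, and the flow property of the nonlinear Young integral. The increment relation $\updl \I=(\Id-\Lambda\updl)\Theta$ is local in time, so solutions on consecutive intervals glue into a solution on the union, and uniqueness on each piece gives global uniqueness. Equivalently, the blowup alternative \eqref{BA1} of Remark \ref{theta}-(ii) is excluded by conservation, so $T_*=\infty$. Continuous dependence on $[0,T]$ follows by composing the finitely many local Lipschitz flow maps.

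For $s>0$, the hypothesis $s\ge 0$ together with \eqref{condX1} at level $s$ does not automatically give \eqref{condX1} at level $0$. There are two ways to proceed, depending on the case:
\begin{itemize}
\item If $1-\rho(1-\nu_c-\eps)\le 0$, we apply Proposition \ref{PROP:main}-(ii) (persistence of regularity) with base regularity $0$ and $s_0=s$. The local time at level $s$ then depends only on the conserved $L^2$ norm. We iterate exactly as above, so the $\H^s$ solution lives on every interval where the $L^2$ solution does.
\item If $0<1-\rho(1-\nu_c-\eps)\le s$, we cannot lower the base regularity to $L^2$. We instead bound the growth of $\|\vec{\bf z}(t)\|_{\H^s}$ on each step, using the persistence bound \eqref{Jaa3}, which is linear in the $\H^{s}$ norm with coefficients controlled by the $L^2$-type quantities.
\end{itemize}

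The main obstacle is this second case. There the time step at level $s$ a priori depends on the $\H^s$ norm through \eqref{YD1}. The fix I would try is persistence from a level $s'$ with $\max(0,1-\rho(1-\nu_c-\eps))\le s'<s$, bootstrapping down from level $s'$. If that fails, I would fall back on a Gronwall-type linear growth estimate in the $\H^s$ norm derived from \eqref{Jaa2}–\eqref{Jaa4}, using the fact that the driver bound in Lemma \ref{pers} is tame, i.e.\ linear in the high norm.
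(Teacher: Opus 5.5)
Your first case is essentially the paper's proof. When $\rho(1-\nu_c-\eps)\ge 1$, the level $s=0$ itself satisfies \eqref{condX1}. Then the $L^2$ conservation of Proposition \ref{PROP:L2}, together with persistence of regularity (Proposition \ref{PROP:main}-(ii) with base level $0$), gives a lifespan depending only on $\|\vec{z}_0\|_{L^2\times L^2}$ and the driver norm on $[0,T]$. Iteration, equivalently excluding \eqref{BA1}, then gives global well-posedness in $\H^s$ for every $s\ge 0$.

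The genuine gap is your second case, $0<1-\rho(1-\nu_c-\eps)\le s$. Neither of your fixes closes it, for one reason: the only a priori controlled quantity is the $L^2$ norm, while every level at which the driver estimates hold lies strictly above $0$.
\begin{itemize}
\item Descending to $s'\in[1-\rho(1-\nu_c-\eps),s)$ only relocates the problem. By Proposition \ref{PROP:main}-(ii), both the step length and the constants in \eqref{Jaa3}--\eqref{Jaa4} are governed by $\|\vec{\bf z}\|_{\H^{s'}}$, and you have no global bound on that.
\item The Gronwall idea rests on the claim that the coefficients in \eqref{Jaa3} are controlled by $L^2$-type quantities, and this is false. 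The bounds \eqref{Jaa3}--\eqref{Jaa4} need $X^j\in\cY^{s',s,\g}_2$. In the near-resonant region $B_2^{(j)}$ of \eqref{BB15}, this amounts to bounding $|n|^{1-s'}|\Xi^{(j)}(\bar n)|^{-\rho}$. The only lower bound available there is $|\Xi^{(j)}(\bar n)|\gtrsim|n|^{1-\nu_c-\eps}$ from Lemma \ref{lem_res}-(ii), and it is essentially attained along the good rational approximations of $c$. So the estimate forces $s'\ge 1-\rho(1-\nu_c-\eps)>0$, which is the same threshold as the local theory; it fails for $s'=0$.
\item A per-step bound that is linear in the high norm gives global control only if the low norm, and hence the step length, stays bounded. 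That is exactly what is missing.
\end{itemize}

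The paper's proof (Proposition \ref{PROP:L2} plus persistence of regularity plus ``a standard argument'') matches your first case and adds nothing for the second. It implicitly assumes that $L^2$ lies in the range of the local theory. So, by either argument, the corollary is justified only when $\rho(1-\nu_c-\eps)\ge 1$, which is presumably the intended reading of the hypothesis.

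In the complementary regime $\frac12\le\rho<\frac{1}{1-\nu_c-\eps}$, you would need a genuinely new a priori bound at some level $s'\ge 1-\rho(1-\nu_c-\eps)>0$. One candidate is an almost-conservation law in the spirit of the $I$-method at positive regularity, as for the unmodulated system; $L^2$ is the only conservation law the paper uses. You located this obstruction correctly, but your proposal does not overcome it.
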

\begin{proof}
As a consequence of Proposition \ref{PROP:L2}, 
by combining  the persistence of regularity result in Proposition~\ref{PROP:main}\,(ii) with a standard argument, we deduce global well-posedness for \eqref{cmkdv}.  
\end{proof}

\subsection{Scaled modulated \texorpdfstring{$I$}{I}-KdV system}
\label{Subsec4}
In view of Corollary \ref{GWP_2}, in order to complete the proof of Theorem \ref{THM:11}-(ii), it remains to consider the global well-posedness for $s<0$. To do so, we employ a one-parameter family of scaling transforms introduced by the second author and collaborators in \cite{GLO}.

Let $b \in \R$ and $\ld\ge 1$. Given $u$ defined on $[0,T]\times \T$  and a modulation $w$ on $[0,T]$, we define $w^{\ld}$ as in \eqref{scalw} such that $\partial_t w^{\ld}(t)=\ld^{3-b}\partial_t w(\ld^{-3}t)$, and $u^\ld$ as
\begin{align} 
\label{scaling1}
u^{\lambda}(t,x)& := \lambda^{- b + 1}u(\lambda^{-b} t, \lambda^{-1} x).
%v^{\lambda} (t, x)& = \lambda^{- b + 1}v (\lambda^{-b} t, \lambda^{-1} x)
%\label{scaling2}
%w^{\lambda} (t) &:= \lambda^{3} w (\lambda^{-b} t),
\end{align} 
 As a result, we formally see that  a pair $(u,v)$ solves \eqref{cmkdv} with initial data $(u_0,v_0)$ if and only if the corresponding scaled pair $(u^{\ld},v^{\ld})$ defined via \eqref{scaling1} solves
\begin{equation} 
\label{scaleKdV}
\begin{cases}	
\dt u^{\lambda} + \dx^3 u^{\lambda} \cdot \dt w^{\lambda} = \frac{1}{2}\dx (v^{\lambda})^2,\\
\dt v^{\lambda} + \al \dx^3 v^{\lambda} \cdot \dt w^{\lambda} = \dx (u^{\lambda}v^{\ld}),
\end{cases}
\quad (t,x)\in [0, \lambda^b T] \times \T_{\lambda},
\end{equation} 

\noi 
with the scaled initial data 
\begin{equation} 
\label{scaleini}
\begin{split}
&(u_0^{\lambda},v_0^{\lambda}) (x) = \lambda^{-b + 1} (u_0,v_0) (\lambda^{-1} x).\\
%&v_0^{\lambda} (x) = \lambda^{-b + 1} v_0 (\lambda^{-1} x)
\end{split}
\end{equation} 
%\noi where the scaled modulation $w^{\lambda}$ is given by \eqref{scaling2}. 
Note that, when $b=3$, \eqref{scaling1}, \eqref{scalw} reduces to the standard KdV scaling used in \cite{CGLLO}. On the other hand, in \cite{GLO} the authors  observed that the presence of the modulation leads to an additional degree of freedom. The idea is then to combine the $I$-method and the sewing lemma with the scalings \eqref{scaling1}, \eqref{scalw}, for a suitable $b$ (see \eqref{b_c}). 

Note that, from \eqref{scaleini}, we infer  $\F_{\T^{\ld}}(u_0^{\ld}) (n) = \ld^{-b+2} \F_{\T} (u_0) (\ld n)$, $n \in \Z_{\ld}$, which yields
\begin{align} 
\label{scaling3}
\| u_0^{\lambda} \|_{\dot{H}^s (\T_{\lambda})} = \lambda^{-b + \frac 32 - s} \| u_0 \|_{\dot{H}^s (\T)} 
\end{align} 
\noi 
for any $s \in \R$, while, for the non-homogeneous Sobolev spaces, we have 
\begin{align}\label{scaling3bis}
\|u_0^\ld\|_{ H^s(\T_\ld)}
\le  \ld^{-b+\frac 32 - s} \|u_0\|_{ H^s(\T)}
\end{align}
for $s\le 0$. In particular, if $b>\frac{3}{2}-s$, we can make
the $\dot H^s(\T_\ld)$-norm (respectively $H^s(\T_\ld)$-norm) of the scaled initial data $u_0^\ld$
small by choosing $\ld\gg 1$, depending either on $\|u_0\|_{H^s(\T)}$ or $\|u_0\|_{\dot H^s(\T)}$.

In order to give a proper meaning to the system \eqref{scaleKdV} we can argue as in Section \ref{sec4}. Namely, we first recast \eqref{scaleKdV} in terms of the Duhamel formulation 
for the modulated interaction representation 
\begin{align}
	(\uu^\ld,\vv^\ld)(t)  =( U^{w^\ld}(t)^{-1} u^\ld(t), U^{w_\al^\ld}(t)^{-1} v^\ld(t)),
	\label{ME0}
\end{align}
\noi
%is given by 
%\begin{equation}
%\begin{cases}
%\uu^{\ld} (t)=u^\ld_0
%+\frac{1}{2}\int_0^t   U^{w^\ld}(t')^{-1}  \dx\big(  ( U^{w_\al^\ld} (t') \vv^\ld(t'))^2   \big)dt',\\
%\vv^{\ld} (t)=v^\ld_0
%+\int_0^t   U^{w_\al^\ld}(t')^{-1}  \dx\big(  ( U^{w^\ld} (t') \uu^\ld(t'))\cdot ( U^{w_\al^\ld} (t') \vv^\ld(t'))  \big)dt'.
%\end{cases}
%\label{Dul3}
%\end{equation}

\noi
with the associated drivers $X^{1,\ld}, X^{2,\ld}$ 
defined as follows:
\begin{equation}
\begin{split}
X_{t,r}^{1,\ld} (f_1, f_2)
&=\frac{1}{2}\int_{r}^{t}
U^{w^\ld}(t')^{-1}
\dx \big( (U^{w_\al^\ld}(t') f_1  )(
U^{w_\al^\ld}(t') f_2)\big) dt',\\
X_{t,r}^{2,\ld} (f_1, f_2)
&=\int_{r}^{t}
U^{w_\al^\ld}(t')^{-1}
\dx \big( (U^{w^\ld}(t') f_1  )(
U^{w_\al^\ld}(t') f_2)\big) dt',
\end{split}
\label{Xld1}
\end{equation}

\noi
for functions $f_1$ and $  f_2$ on $\T_\ld$. %See also Subsection \ref{SUBSEC:I5} for the real line case (Theorem \ref{THM:2}\,(ii)).
%In view of \eqref{I1}, 
%it then suffices to control the so-called
%modified energy $\| Iu(t)\|_{L^2}^2$
%.\footnote{As we see below, given a target time $T> 0$, 
%	we in fact apply a scaling first and study the scaled modified energy in \eqref{ME1}.}
%
%In Subsection \ref{SUBSEC:I1}, 
%we go over basic scaling properties
%of the periodic modulated KdV.
%In Subsection~\ref{SUBSEC:I2}, 
%we then study  local well-posedness
%of the so-called (scaled) modulated $I$-KdV equation~\eqref{kdv4}.
%After establishing a crucial commutator estimate
%(Proposition \ref{PROP:com}) in Subsection~\ref{SUBSEC:I4}, 
%we establish global well-posedness
%by combining the commutator estimate (Proposition \ref{PROP:com})
%with the sewing lemma (Lemma \ref{LEM:sew}).
%It is worthwhile to note that we need to make use
%of the sewing lemma even in the globalization argument via the $I$-method
%which is a new feature in the current modulated setting.
%We also discuss the real line case in Subsection~\ref{SUBSEC:I5}.
Note that, from
\eqref{Xld1}
with 
\eqref{FT3}, 
\eqref{rho2}, 
\eqref{scalw}, 
and a  change of variables,   we have
\begin{equation}\label{driversF}
\begin{split}
 2\F_{\T_\ld}\big( X^{1,\ld}_{t,r} ( f _1,  f _2)\big)(n)
= \frac{in}{\ld} \sum_{ \sub {n_1,n_2 \in\Z_{\ld} \\ n=n_1+n_2}}
\Phi^{w^\ld}_{t,r}(\Xi^{(1)} (\bar n))
\ft  f _1(n_1) \ft  f _2(n_2),\\
 \F_{\T_\ld}\big( X^{2,\ld}_{t,r} ( f _1,  f _2)\big)(n)
= \frac{in}{\ld} \sum_{ \sub {n_1,n_2 \in\Z_{\ld} \\ n=n_1+n_2}}
\Phi^{w^\ld}_{t,r}(\Xi^{(2)} (\bar n))
\ft  f _1(n_1) \ft  f _2(n_2).
\end{split}
\end{equation}
Moreover, by arguing as in \cite[Lemma 7.3]{CGLLO}, we have  the equivalence between the scaled system \eqref{scaleKdV} and the original one in \eqref{cmkdv}; see Lemma \ref{boooh}.

\begin{lemma}\label{boooh}
Let $\ld\ge 1$ and $b\in \R$. Given  $\alpha\in (0,4)\setminus\{1\}$,  $\rho \ge\frac 12$,  $\frac12< \g < 1$, $T> 0$, 
and $w$ be $(\rho,\g)$-irregular on $[0, T]$. Fix $s\in \R$ satisfying \eqref{condsintro} and  $(u_0,v_0)\in H_0^s(\T)\times H^s(\T)$. Then, $(u,v)$ solves the modulated KdV system \eqref{cmkdv} on $[0,\tau]\times \T$ for some $0\le \tau\le T$ if and only if the scaled pair $(u^\ld,v^\ld)$ defined in \eqref{scaling1}  solves \eqref{scaleKdV} on $[0,\ld^b\tau]\times \T_\ld$ with scaled initial data defined in \eqref{scaleini}.
\end{lemma}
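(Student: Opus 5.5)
The plan is to reduce the statement to an identity between Riemann-type sums, which transforms exactly under the scaling. Since a solution is by definition a pair whose interaction representation solves the Young system \eqref{YDE1}, and the nonlinear Young integral is the limit of Riemann sums of the driver (Proposition \ref{PROP:young1}), it suffices to show that the scaled representation solves the scaled system and conversely.

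\textbf{Step 1: the scaling operator and the interaction representation.} For $g$ on $\T$ set $(S_\ld g)(x):=\ld^{-b+1}g(\ld^{-1}x)$, so that $u^\ld(t)=S_\ld u(\ld^{-b}t)$. Since $\widehat{S_\ld g}(n)=\ld^{-b+2}\ft g(\ld n)$ for $n\in\Z_\ld$, and $w^\ld(t)n^3=w(\ld^{-b}t)(\ld n)^3$ by \eqref{scalw}, the Fourier multipliers commute with the scaling:
\[
U^{w^\ld}(t)S_\ld=S_\ld U^{w}(\ld^{-b}t),\qquad U^{w^\ld_\al}(t)S_\ld=S_\ld U^{w_\al}(\ld^{-b}t).
\]
Hence the scaled interaction representation \eqref{ME0} is $(\uu^\ld,\vv^\ld)(t)=(S_\ld\uu(\ld^{-b}t),S_\ld\vv(\ld^{-b}t))$. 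As $S_\ld$ is a bounded bijection $H^s(\T)\to H^s(\T_\ld)$ and a time rescaling preserves Hölder classes, we get $(\uu,\vv)\in\CC^\s([0,\tau];\H^s(\T))$ if and only if $(\uu^\ld,\vv^\ld)\in\CC^\s([0,\ld^b\tau];\H^s(\T_\ld))$.

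\textbf{Step 2: driver intertwining.} Next I will verify
\[
X^{j,\ld}_{t,r}(S_\ld f_1,S_\ld f_2)=S_\ld X^{j}_{\ld^{-b}t,\ld^{-b}r}(f_1,f_2).
\]
This can be done on the Fourier side, comparing \eqref{driversF} with \eqref{F_X}--\eqref{F_X2}. On the left, $\Phi^{w^\ld}_{t,r}(\Xi^{(j)}(\bar n))=\ld^{b}\Phi^{w}_{\ld^{-b}t,\ld^{-b}r}(\Xi^{(j)}(\ld\bar n))$ by the change of variables $t'\mapsto\ld^{-b}t'$ together with the homogeneity $\ld^3\Xi^{(j)}(\bar n)=\Xi^{(j)}(\ld \bar n)$. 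The remaining factors should balance:
\[
\tfrac{in}{\ld}\cdot\ld^{b}\cdot\ld^{2(-b+2)}=i(\ld n)\,\ld^{-b+2}.
\]
Checking these powers of $\ld$ is where I expect the main, though routine, bookkeeping. A cleaner alternative is to use the time-integral form \eqref{Xld1} directly. There $\dx S_\ld=\ld^{-1}S_\ld\dx$ and $(S_\ld g)(S_\ld h)=\ld^{-b+1}S_\ld(gh)$, and the substitution $t'=\ld^{b}t''$ contributes the factor $\ld^{b}$; these combine to exactly one $S_\ld$. I will use this form, since it avoids the Fourier normalization.

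\textbf{Step 3: transfer of Riemann sums.} Take a partition $\Pi$ of $[0,\ld^b t]$. It corresponds to the partition $\ld^{-b}\Pi$ of $[0,t]$, with mesh scaled by $\ld^{-b}$. Step 2 and linearity of $S_\ld$ give
\[
\sum_j X^{1,\ld}_{t_j,t_{j+1}}(\vv^\ld(t_{j+1}),\vv^\ld(t_{j+1}))=S_\ld\sum_j X^{1}_{\ld^{-b}t_j,\ld^{-b}t_{j+1}}(\vv(\ld^{-b}t_{j+1}),\vv(\ld^{-b}t_{j+1})).
\]
Passing to the limit, which exists by Proposition \ref{PROP:young1} on both sides because $X^j$ and $X^{j,\ld}$ are admissible drivers under \eqref{condsintro} (Remark \ref{samedriver}, Lemma \ref{lem_nonlinsmooth}), yields
\[
\I^{X^{1,\ld}}(\vv^\ld,\vv^\ld)(t)=S_\ld\,\I^{X^1}(\vv,\vv)(\ld^{-b}t),
\]
and similarly for $j=2$. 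Continuity and invertibility of $S_\ld$ then make \eqref{YDE1} on $[0,\tau]$ equivalent to its scaled version on $[0,\ld^b\tau]$ with data \eqref{scaleini}. The mean-zero condition on $u_0$ is preserved because $\widehat{S_\ld u_0}(0)=\ld^{-b+2}\ft u_0(0)$. This proves both directions.
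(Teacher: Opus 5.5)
Your proposal is correct and takes essentially the same route as the paper, which gives no separate proof but defers to \cite[Lemma 7.3]{CGLLO}: the propagators and drivers intertwine with the scaling, so the germ $\Theta^{\ld}_{t,r}$ is $S_\ld\Theta_{\ld^{-b}t,\ld^{-b}r}$, and the Riemann-sum (sewing) characterization of the nonlinear Young integral in Proposition \ref{PROP:young1} then transfers between the two systems. One small point to tighten: Lemma \ref{lem_nonlinsmooth} is stated only on $\T$, so deduce that $X^{j,\ld}$ lies in $\cX^{s,\g}_2([0,\ld^bT]\times\T_\ld)$ directly from your identity $X^{j,\ld}_{t,r}(S_\ld f_1,S_\ld f_2)=S_\ld X^{j}_{\ld^{-b}t,\ld^{-b}r}(f_1,f_2)$ together with the $\ld=1$ estimate, rather than citing that lemma.
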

% We remind the reader to Remark \ref{sol} for the notion of solution. 

%In this section, we prove global well-posedness of 
%the coupled modulated   KdV system \eqref{cmkdv}  in negative Sobolev spaces.
 
In what follows, we first introduce the scaled modulated $I$-KdV system, and then adapt the so-called $I$-method ($=$ the method of almost conservation laws), 
introduced by Colliander, Keel, Staffilani, Takaoka, and Tao
\cite{CKSTT03}, to the  modulated setting.

Fix   $s<0$.
Given $N \geq 1$,
we define a smooth, even, 
non-increasing (in $|\xi|$) 
function 
$m_{s, N} \in C^\infty(\R; [0, 1])$ 
by setting
\begin{align}
	m_{s, N}(\xi)=
	\begin{cases}
		1, 
		&\text{for }
		|\xi|\leq N,   \\
		\frac{ |\xi|^s }{ N^s }.
		&\text{for }
		|\xi|\geq  2N.
	\end{cases}
	\label{Iop1}
\end{align}

\noi
Then, we define the so-called $I$-operator $I = I_{s, N}$
to be the Fourier multiplier operator with the multiplier $m_{s, N}$:
\begin{align}
	\ft{I_{s, N}f}(\xi)=m_{s, N}(\xi)\ft{f}(\xi),
	\label{Ix2}
\end{align}

\noi satisfying  the following bounds
\begin{align}
	\| f \|_{\dot H^s (\T_\ld) } \les \ld ^{-s}\| I f \|_{L^2(\T_\ld)}
	\les N^{-s} \| f \|_{H^s(\T_\ld)}.
	\label{I1}
\end{align}

By applying the $I$-operator defined in \eqref{Ix2} to the scaled modulated KdV system \eqref{scaleKdV}, we obtain that the pair $(Iu^{\ld}, Iv^{\ld})$ solves the following system on the dilated torus $\T_\ld$
\begin{equation}
\begin{cases}
\label{SMIKdV} 
\dt I u^{\lambda} + \partial_x^3 I u^{\lambda} \cdot \dt w^{\lambda} = \frac{1}{2}\dx I \big((I^{-1}Iv^{\lambda})(I^{-1}Iv^{\lambda}) \big),\\
\dt I v^{\lambda} + \al\partial_x^3 I v^{\lambda} \cdot \dt w^{\lambda} = \dx I \big((I^{-1}Iu^{\lambda})(I^{-1}Iv^{\lambda}) \big),
\end{cases}
\end{equation} 
with initial data $(I u_0^\ld, Iv^\ld_0)$.
Then, for  $j=1,2$,  similarly to \eqref{F_X}, \eqref{F_X2}, we define the associated drivers $Y^{j,\lambda}$,  by
\begin{equation} 
\begin{split}
\label{X1d2}
Y^{j,\lambda} (f_1, f_2) = I X^{j,\lambda} (I^{-1} f_1, I^{-1} f_2)\\
%Y^{2,\lambda} (f_1, f_2) = I X^{2,\lambda} (I^{-1} f_1, I^{-1} f_2) 
\end{split}
\end{equation} 
\noi 
for functions $f_1$ and $f_2$ on $\T_{\lambda}$, where $X^{j,\lambda}$ are defined as in \eqref{Xld1}.

Next, from Lemma \ref{LEM:tri2} (and in particular Corollary \ref{COR:I2}) we deduce local well-posedness in $L^2(\T_\ld)\times L^2(\T_\ld)$ of the scaled modulated $I$-KdV system \eqref{SMIKdV}.
%{\Bl{As it is customary,  in order to prove that the $H^s\times H^s$ norm of a solution does not blow-up in a finite time, we will employ a scaling argument and prove the scaled energy of a solution remains bounded (after choosing in a suitable way the size of the scaling parameter, see \eqref{ME}).}} 

%Note that $\dx I \big( u^{\lambda}v^{\ld} \big) = \dx I \big( (I^{-1} I u^{\lambda}) (I^{-1} I v^{\lambda}) \big)$. 

%Note that, for the convenience of the reader, in the following statements we use the same conventions already highlighted in Remark \ref{rem_0}.

%\begin{proof}
%By Lemma \ref{lem_Phi} and the H\"older inequality, \eqref{eq_lem_bi} is bounded by
%\begin{equation*}
%\begin{split}
%& \lambda^{2b(1-\gamma)-6\rho+1}\|\Phi^{w}\|^2_{\W^{\rho,\g}_{T}}|t-r|^{2\gamma}\sum_{n \in \Z_\lambda}
%\sum_{ \substack{n_1, n_2 \in \Z_\lambda\\n = n_1+n_2}} |F(\bar n)|^2 |a(\bar n) |^{-2\rho}
%| \ft f(n_1) |^2 \|g\|_{L^2(\T_\lambda)}^2\\
%& \le \lambda^{2b(1-\gamma)-6\rho+2}C_0\|\Phi^{w}\|^2_{\W^{\rho,\g}_{T}}|t-r|^{2\gamma}
%\|f\|_{H^s(\T_\lambda)}^2
%\|g\|_{H^s(\T_\lambda)}^2.
%\end{split}
%\end{equation*}
%\end{proof}

\remark \rm For the rest of the section, given $0<\eps<1-\nu_c$ and $\rho\ge \frac{1}{2}$,  our basic assumption on $s$ to guarantee local well-posedness, will be 
\begin{align}\label{condsneg}
 1-\rho(1-\nu_{c}-\eps)\le s<0,
\end{align}
see again \eqref{condsintro}.
In particular, for the restriction \eqref{condsneg} to be possible we need to further assume
\begin{align}\label{condrhog}
\rho>\frac{1}{1-\nu_c-\eps}.
\end{align}

\begin{lemma} 
\label{LEM:tri2}
Let $\ld\ge 1$ and $b \in \R$. Given $\alpha\in (0,4)\setminus\{1\}$, $0<\eps<1-\nu_c$, $\rho$ satisfying \eqref{condrhog}, $\frac 12 < \gamma < 1$, and $T > 0$, let $w$ $(\rho, \gamma)$-irregular on $[0, T]$. Suppose that $s$ satisfies \eqref{condsneg}. Then, the driver $Y^{j,\lambda}$ defined in \eqref{X1d2} belongs to $\mathcal{X}_2^{0, \gamma} ([0, \lambda^b T] \times \T_{\lambda})$ defined in \eqref{X1} satisfying  
\begin{align} 
\label{MIF1}
\| Y_{t, r}^{j,\lambda} \|_{\mathcal{L}_2 (L^2 (\T_{\lambda}))} \les \lambda^{-\frac 32 + b(1- \gamma)} \| \Phi^w \|_{\mathcal{W}^{\rho, \gamma}_{T}} |t - r|^{\gamma} 
\end{align} 

\noi 
for any $0 \leq r < t \leq \lambda^{b} T$.  In particular, 
\begin{align}\label{MIF11}
 \|Y_{t, r}^{j,\lambda}\|_{\mathcal{X}_2^{0, \gamma} ([0, \lambda^b T] \times \T_{\lambda})}\les \lambda^{-\frac 32 + b(1- \gamma)} \| \Phi^w \|_{\mathcal{W}^{\rho, \gamma}_{T}}.
\end{align}
\end{lemma}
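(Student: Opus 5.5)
Our plan is to reduce \eqref{MIF1} to Lemma \ref{lem_bi} on the dilated torus $\T_\ld$, exactly as in the proof of Lemma \ref{lem_nonlinsmooth}, but keeping track of the powers of $\ld$. Once \eqref{MIF1} is available, \eqref{MIF11} follows from the analogue of Lemma \ref{LEM:OBS1}-(i) for $Y^{j,\ld}$ (Remark \ref{samedriver}).

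\textbf{Reduction to Lemma \ref{lem_bi}.} From \eqref{driversF} and \eqref{X1d2}, the Fourier coefficient of $Y^{j,\ld}_{t,r}(f_1,f_2)$ at $n$ equals
\[
\frac{in}{\ld}\sum_{n=n_1+n_2}\frac{m(n)}{m(n_1)m(n_2)}\,\Phi^{w^\ld}_{t,r}(\Xi^{(j)}(\bar n))\,\ft f_1(n_1)\ft f_2(n_2),
\]
up to a factor $\frac12$ for $j=1$, with $m=m_{s,N}$. By \eqref{FT3}, $\|\cdot\|_{L^2(\T_\ld)}^2=\ld^{-1}\sum_n|\cdot|^2$. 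Applying Lemma \ref{lem_bi} with
\[
F(\bar n)=\frac{|n|\,m(n)}{m(n_1)m(n_2)},\qquad a=\Xi^{(j)},
\]
the prefactor collects $\ld^{-2}\cdot\ld^{-1}\cdot\ld^{2b(1-\g)-6\rho+2}$ against the output factor $\ld^{-1}\|f_1\|^2\|f_2\|^2$ coming from $L^2(\T_\ld)$. So \eqref{MIF1} reduces to showing that the sums in \eqref{eq_condition_1}/\eqref{eq_condition_2} are bounded by
\[
K\les \ld^{6\rho-3+\text{(bookkeeping)}},
\]
uniformly in $N$. The precise power of $\ld$ is to be recomputed carefully; the target is the net exponent $-3+2b(1-\g)$ after squaring.

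\textbf{Estimating the multiplier sums.} We split $\Sigma$ into the regions $B_1^{(j)},\dots,B_5$ of \eqref{BB15}, now in $\Z_\ld^*$ with thresholds $\ld^{-1}$. On these regions we use Lemma \ref{lem_res} and Corollary \ref{cor_res} with general $\ld$, together with Proposition \ref{sum28} to count lattice points: the count $\sum_{|n_1|\sim|n|}1\sim\ld|n|$ costs one power of $\ld$. For the multiplier we use the standard facts that $m$ is nonincreasing and that, since $s<0$, $\frac{m(n)}{m(n_1)m(n_2)}\les \big(\frac{\max(|n_1|,|n_2|)}{N}\big)^{-s}$ on the high-frequency support. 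With $s\ge 1-\rho(1-\nu_c-\eps)$, this makes the multiplier behave like $\jb{n_1}^{-s}\jb{n_2}^{-s}|n|^{s}$, so each region reproduces the computation in Lemma \ref{lem_nonlinsmooth} with $s_0=s$, up to powers of $\ld$. The worst region is $B_2^{(j)}$, where \eqref{eq_res_2} gives only
\[
|\Xi^{(j)}|^{-2\rho}\les \ld^{2\rho(2+\nu_c+\eps)}\,|n|^{-2\rho(1-\nu_c-\eps)}.
\]
Here we must check that the resulting power of $\ld$ does not exceed $6\rho$ minus the required amount. Since $|n|\ge\ld^{-1}$, negative powers of $|n|$ also produce powers of $\ld$, and these have to be absorbed.

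\textbf{Main obstacle.} The main difficulty is the $\ld$-bookkeeping on the near-resonant set $B_2^{(j)}$, together with the low-frequency contributions where $|n|\sim\ld^{-1}$. There the Diophantine loss $\ld^{2+\nu_c+\eps}$ in \eqref{eq_res_2} must be compensated by the factor $\ld^{-6\rho}$ from Lemma \ref{lem_Phi}. We would first try to bound the supremum of the summand by taking $|n|$ at its extremes ($\ld^{-1}$ and $\infty$), using $\rho>\frac1{1-\nu_c-\eps}$ from \eqref{condrhog} to make the exponent of $|n|$ nonpositive. If a residual positive power of $\ld$ appears, we would refine using Corollary \ref{cor_res}, which applies to the mean-zero modes with $n_1\neq0$ for $j=2$. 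The modes $n_1=0$ or $n_2=0$ are handled as in \eqref{SSum}.
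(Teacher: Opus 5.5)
The architecture is the paper's: Fourier side via \eqref{driversF}, Lemma~\ref{lem_bi} with $F=|n|M$ and $a=\Xi^{(j)}$, the regions of \eqref{BB15}, and zero modes as in \eqref{SSum}. But there is a genuine gap. The proposal stops exactly at the step that carries the content of the lemma, and what it says about that step would not close.

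First, the target is never pinned down. Lemma~\ref{lem_bi} already returns $L^2(\T_\ld)$ norms, so the only factors are $\ld^{-3}\cdot\ld^{2b(1-\g)-6\rho+2}$. What you must show is $K\les\ld^{6\rho-2}$, uniformly in $N$, not "$\ld^{6\rho-3+\cdots}$".

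More seriously, your multiplier bounds are inadequate.
- The bound $M\les(\max(|n_1|,|n_2|)/N)^{-s}$ is false for $|n_1|\sim|n_2|\ge 2N\ge 2|n|$. There $M\sim(|n_1||n_2|/N^2)^{-s}$, which is unbounded relative to $(|n_1|/N)^{-s}$.
- The surrogate $|n|^s\jb{n_1}^{-s}\jb{n_2}^{-s}$ is not homogeneous. It overestimates $M\le 1$ by a factor $\ld^{-s}$ when $|n|\sim\ld^{-1}$. Fed into $B^{(j)}_2$ with \eqref{eq_res_2}, it gives $\ld^{2\rho(2+\nu_c+\eps)}\sup_{\ld^{-1}\le|n|\le1}|n|^{2+2s-2\rho(1-\nu_c-\eps)}=\ld^{6\rho-2-2s}$. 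That is a residual loss $\ld^{-s}$ in \eqref{MIF1}, precisely the scenario you leave open.
- Your fallback of refining with Corollary~\ref{cor_res} cannot remove this loss. That corollary gives the same $\ld$-dependent lower bounds on $|\Xi^{(j)}|$ as Lemma~\ref{lem_res}, and the loss comes from the multiplier, not the resonance.

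The missing idea is that everything is exactly scale invariant. Use the homogeneous bound \eqref{bound_M}, $M(\bar n)\les\ld^{-s}|n|^s|n_1|^{-s}|n_2|^{-s}$. The paper uses it after reducing to $N=1$ via the interpolation lemma of the $I$-method. It in fact holds uniformly in $N\ge1$, as a case check with $m_{s,N}(\xi)\sim\min(1,(|\xi|/N)^s)$ shows, so your uniform-in-$N$ route is fine once this bound replaces yours.

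Then substitute $n=m/\ld$ and $n_i=m_i/\ld$ with $m,m_i\in\Z$. Three things rescale:
- $|n|^2=\ld^{-2}|m|^2$;
- $\Xi^{(j)}(\bar n)=\ld^{-3}\Xi^{(j)}(\bar m)$;
- the multiplier bound becomes $|m|^s|m_1|^{-s}|m_2|^{-s}$.

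Hence $K\les\ld^{6\rho-2}A^{(j,k)}_{s,s}$, where the $A^{(j,k)}_{s,s}$ are exactly the integer-lattice sums \eqref{A19}--\eqref{A319} of Lemma~\ref{lem_nonlinsmooth} with $s_0=s$. These are $\les 1$ under \eqref{condX1}. The factor $\ld^{6\rho}$ cancels the $\ld^{-6\rho}$ from Lemma~\ref{lem_Phi} exactly, which yields $\ld^{-3+2b(1-\g)}$ with no $\ld$-dependent Diophantine input at all.

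If you prefer to stay on $\Z_\ld$ and use Lemma~\ref{lem_res} at level $\ld$, the same balance appears. The loss $\ld^{2\rho(2+\nu_c+\eps)}$ is offset by evaluating $|n|^{2-2s-2\rho(1-\nu_c-\eps)}$ at $|n|=\ld^{-1}$, together with the factor $\ld^{-2s}$ from the multiplier. The nonpositivity of that exponent is exactly \eqref{condsneg}, not \eqref{condrhog}.
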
 
\begin{proof}
We follow the argument in the proof of \cite[Lemma 7.4]{CGLLO}, which is in turn a modification of the proof of Lemma \ref{lem_nonlinsmooth}. Furthermore,  in view of Lemma \ref{LEM:OBS1}, it suffices to prove \eqref{MIF1}. 
By the interpolation lemma (see \cite [Lemma 12.1]{CKSTT04}), it is enough to prove \eqref{MIF1} for $N=1$. 
As in the proof of Lemma \ref{lem_nonlinsmooth}, we start by assuming  $\ft f_1(0)=\ft f_2(0)= 0$. 
By \eqref{driversF},  \eqref{Ix2}, we have 
\begin{align} 
\begin{aligned}
&\|  Y^{j,\ld}_{t,r}( f _1, f _2)  \|^2_{L^2(\T_\ld)} \\
&\quad=
\ld^{-3} \sum_{ n\in\Z_\ld^*}
|n|^2 \bigg|  \sum_{ \substack{n_1, n_2 \in \Z_\ld^*\\n = n_1+n_2}}
 M (\bar n)
\Phi^{w^\ld}_{t,r}(\Xi^{(j)}(\bar n) ) 
\ft f _1(n_1)
\ft  f _2 (n_2)
\bigg|^2 \\
&\quad=
\ld^{-1} \sum_{ n\in\Z_\ld^*}
|n|^2  \bigg( \sum_{ \substack{n_1, n_2 \in \Z_\ld^*\\n = n_1+n_2}}\ld^{-1}
\big| M (\bar n)\big| \big|
\Phi^{w^\ld}_{t,r}(\Xi^{(j)}(\bar n) ) \big|
\big|\ft f _1(n_1)\big|
\big|\ft  f _2 (n_2)\big|\bigg)
^2, \\
%\les\,\, & \ld^{-1}
%\bigg(\sup_{n_1 \in \Z_\ld^*}
%\sum_{\substack{n, n_2 \in \Z_\ld^*\\n_1 = n - n_2}} 
%\ind_{\{|n_1| \ge |n_2|\}} |n|^2 | M (\bar n)|^2
%|\Phi^{w^\ld}_{t,r} (\Xi^{(1)}(\bar n)) |^2 \bigg)
%\|f_1\|_{L^2(\T_\ld)}^2
%\|f_2\|_{L^2(\T_\ld)}^2, 
\end{aligned} 
\label{IF2}
\end{align}

\noi
where $ M (\bar n) =  M(n,n_1, n_2)$ is defined by 
\begin{align}
M (\bar n) =  M(n,n_1, n_2)=
\frac{m_{s, 1}(n) }
{m_{s, 1}(n_1) m_{s, 1}(n_2)}, 
\label{IF3}
\end{align}
and $m_{s,1}$ is defined in \eqref{Iop1}.
Note that
\begin{align}
\label{bound_M}
 M (\bar n) \les \ld^{-s} \frac{|n|^{s}}{|n_1|^{s}|n_2|^{s}}
\end{align}

\noi
for any $n, n_1, n_2 \in \Z_\ld^*$
with $n = n_1 + n_2$. 
%Moreover, from \eqref{rho2}, 
%\eqref{scaling2}, 
%\eqref{rho1}, and~\eqref{K3a}, 
%we have
%\begin{align}
%\begin{aligned}
%\Phi^{w^\ld}_{t,r} (\Xi_\KDV(\bar n)) 
%& = \bigg| 
%\int_{r}^{t}  e^{i  \Xi_\KDV (\bar n) w^{\ld}(t') } d t'
%\bigg|
%= \ld^b   \bigg|  \int_{\ld^{-b}r}^{\ld^{-b}t} 
%e^{i  \ld^3  \Xi_\KDV(\bar n)  w(t') } dt'  \bigg|  \\
%& \les 
%\ld^{b(1-\gamma)-3\rho} 
% \|\Phi^{w}  \|_{\W_{T}^{\rho, \g}}
% |t-r|^{\g} 
% |n n_1 n_2|^{-\rho}
%\end{aligned}
%\label{D3}
%\end{align} 
Thus, by combining \eqref{IF2}, \eqref{IF3}, \eqref{bound_M}, Lemma \ref{lem_Phi}, Lemma \ref{lem_bi} and arguing as in the proof of Lemma \ref{lem_nonlinsmooth} with $s=s_0$ (see also Remark \ref{b11}) we obtain
\begin{equation}\label{Y22}
\begin{split}
\| Y^{j,\ld}_{t,r}( f _1, f _2)  \|^2_{L^2(\T_\ld)} &\les  \ld ^{-3+2b(1-\gamma)} A^{(j,k)}\| \Phi^w \|^2_{\mathcal{W}_{T}^{\rho, \gamma}} |t - r|^{2\gamma} \|f_1 \|^2_{L^2(\T_\ld)} \|f_1 \|^2_{L^2(\T_\ld)},
\end{split}
\end{equation}
where $A^{(j,k)}$ are defined as in \eqref{A19}, \eqref{A219} and \eqref{A319}. Thus, by combining  \eqref{Asmall}
with Lemma \ref{lem_nonlinsmooth} we conclude the proof provided \eqref{condX1} holds. 
Let us move to the general case, namely by only assuming $\ft{f}_1(0)=0$ if $j=2$. Then,  by arguing as in \eqref{IF2}, we have
\begin{align*}
&\|  Y^{j,\ld}_{t,r}( f _1, f _2)  \|^2_{L^2(\T_\ld)} \\
&\quad \les 
\ind_{\{j=1\}}\cdot \ld^{-1} \sum_{ n\in\Z_\ld^*}
|n|^2  \bigg( \sum_{ \substack{n_1\in \Z_\ld, n_2 \in \Z_\ld^*\\n = n_1+n_2}} \ind_{\{n_1=0\}}\cdot \ld^{-1}
\big| M (\bar n)\big| \big|
\Phi^{w^\ld}_{t,r}(\Xi^{(j)}(\bar n) ) \big|
\big|\ft f _1(n_1)\big|
\big|\ft  f _2 (n_2)\big|\bigg)
^2\\
&\phantom{XX}+\ld^{-1} \sum_{ n\in\Z_\ld^*}
|n|^2  \bigg( \sum_{ \substack{n_1 \in \Z_\ld^*, n_2\in \Z_\ld\\n = n_1+n_2}}\ind_{\{n_2=0\}}\cdot\ld^{-1}
\big| M (\bar n)\big| \big|
\Phi^{w^\ld}_{t,r}(\Xi^{(j)}(\bar n) ) \big|
\big|\ft f _1(n_1)\big|
\big|\ft  f _2 (n_2)\big|\bigg)
^2\\
&\phantom{XX} +\ld^{-1} \sum_{ n\in\Z_\ld^*}
|n|^2  \bigg( \sum_{ \substack{n_1, n_2 \in \Z_\ld^*\\n = n_1+n_2}}\ld^{-1}
\big| M (\bar n)\big| \big|
\Phi^{w^\ld}_{t,r}(\Xi^{(j)}(\bar n) ) \big|
\big|\ft f _1(n_1)\big|
\big|\ft  f _2 (n_2)\big|\bigg)
^2\\
&\quad=:S^{(j)}_1+S^{(j)}_2+S^{(j)}_3.
\end{align*}
The term $S^{(j)}_3$ is controlled as in \eqref{Y22} under the assumption \eqref{condX1}. It remains to consider the terms $S^{(j)}_1,\, S^{(j)}_2$.
As far as concern $S^{(j)}_2$, by Lemma \ref{lem_Phi}, noting that $M(\bar{n})=1$ if $n_2=0$, arguing as in the proof of Lemma \ref{lem_nonlinsmooth}, and \eqref{res2intro},
\begin{align*}
& S^{(j)}_2\les \ld^{-3+2b(1-\g)}\| \Phi^w \|^2_{\mathcal{W}^{\rho, \gamma}_{T}} |t - r|^{2\gamma} \| f_1 \|^2_{L^2 (\T_\ld)} \| f_2 \|^2_{L^2(\T_\ld)}\\
&\phantom{XXXX}\times \bigg(\sup_{m \in \Z^{\ast}} \sum_{\substack{m_1\in \Z^*, m_2 \in\Z\\m = m_1+m_2}}\ind_{\{m_2=0\}}\cdot  \frac {|m|^{2}}{|\Xi^{(j)} (\bar{m})|^{2\rho}}\bigg)\\
%& \phantom{XX}\sim \bigg(\sup_{m \in \Z^{\ast}}|m|^{2-6\rho}\bigg) \ld^{-3+2b(1-\g)}\| \Phi^w \|^2_{\mathcal{W}^{\rho, \gamma}_{T}} |t - r|^{2\gamma} \| f_1 \|^2_{L^2 (\T_\ld)} \| f_2 \|^2_{L^2(\T_\ld)} \\
& \phantom{XX} \les \ld^{-3+2b(1-\g)} \| \Phi^w \|^2_{\mathcal{W}^{\rho, \gamma}_{T}} |t - r|^{2\gamma} \| f_1 \|^2_{L^2 (\T_\ld)} \| f_2 \|^2_{L^2(\T_\ld)}
\end{align*}
since $\rho\ge \frac{1}{2}$.  If $j=1$ the same estimate holds for $S^{(j)}_1$, while if  $j=2$ then $S^{(j)}_1=0$ by definition so we don't get additional conditions. This concludes the proof of \eqref{MIF1}. Furthermore, by an adaptation of  Lemma \ref{LEM:OBS1} to the scaled setting (cf., Remark \ref{samedriver}) and Lemma \ref{LEM:tri2}, we have that ${Y}^{j,\ld}\in \cX^{0,\g}_2([0, \ld^b T]\times \T_\ld)$ satisfying \eqref{MIF11}. 
\end{proof}

Similarly to the previous section, for the convenience of the reader, we denote $\vec{Y}^{\ld}:=~(Y^{1,\ld},Y^{2,\ld})$. As a corollary to Lemma \ref{LEM:tri2}, we get
\begin{corollary}
\label{COR:I2}
Let $\ld \ge 1$ and $b \in \R$.
 Given $\alpha\in (0,4)\setminus\{1\}$, $0<\eps<1-\nu_c$, $\rho$ satisfying \eqref{condrhog}, $\frac 12 < \gamma < 1$, $T > 0$ and $w$ $(\rho, \gamma)$-irregular on $[0, T]$.
Fix  $s$ satisfying \eqref{condsneg}.
Then, given any 
$(u_0,v_0) \in L^2(\T_\ld)\times L^2(\T_\ld)$ and  $t_0 \in [0, \ld^b T]$, 
there exist $C_0 > 0$, $\ta > 0$, both independent of $(u_0,v_0)$ and $t_0$, 
and a unique solution $(Iu^{\ld},Iv^{\ld})$
to 
the scaled modulated $I$-KdV system~\eqref{SMIKdV} 
with $(I u^\ld,I v^\ld)(t_0)= (u_0,v_0)$
on the time interval $[t_0, t_0 + \tau]\cap [0, \ld^b T]$,  
%belonging to the class\textup{:}
%\begin{align*}
% \cD_{w}^0\big( ([t_0, t_0 + \tau] \cap [0, \ld^b T]) \times \T_\ld\big), 
%\end{align*}
\noi
where the local existence time 
$\tau\in (0,1]$ satisfies
\begin{align}
\tau  \ge C_0 \big(\|\vec{Y}^{\ld}\|_{\cX^{0,\g}_2(\ld^b T)\times \cX^{0,\g}_2(\ld^b T)})^{-\ta} \big( 1 + \|(u_0,v_0) \|_{L^2(\T_\ld)\times L^2(\T_\ld) }\big)^{-\ta}.
\label{ME0a}
\end{align}

\noi
Moreover, given any $0 < \s < \gamma$  such that $\s + \gamma > 1$, 
\begin{align*}
 \| (Iu^\ld,Iv^{\ld}) \|_{\CC^\s ([t_0, t_0 + \tau] \cap [0, \ld^b T]; L^2(\T_\ld)\times L^2(\T_\ld))}\le C_\s
 \|(u_0,v_0)\|_{L^2(\T_\ld)\times L^2(\T_\ld)}.
%\label{ME0b}
\end{align*}
\end{corollary}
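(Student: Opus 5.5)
The plan is to view Corollary \ref{COR:I2} as a direct instance of the abstract local theory for nonlinear Young systems, namely Proposition \ref{PROP:main}\,(i) (equivalently \cite[Proposition 3.8]{CGLLO}). We take $\T_\ld$ as the spatial domain, $L^2(\T_\ld)\times L^2(\T_\ld)$ as the phase space, and $\vec{Y}^\ld$ as the driver. The first step is to rewrite \eqref{SMIKdV} in its modulated interaction representation. For $(\uu^\ld,\vv^\ld)$ as in \eqref{ME0}, the pair $(I\uu^\ld, I\vv^\ld)$ should solve
\begin{align*}
(I\uu^\ld, I\vv^\ld)(t) = (I\uu^\ld, I\vv^\ld)(t_0) + \big(\I^{Y^{1,\ld}}(I\vv^\ld,I\vv^\ld)(t),\,\I^{Y^{2,\ld}}(I\uu^\ld,I\vv^\ld)(t)\big)
\end{align*}
on $[t_0,t_0+\tau]$. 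This uses that $I$ is a Fourier multiplier commuting with $U^{w^\ld}$ and $U^{w^\ld_\al}$, together with the definition \eqref{X1d2}. The solution notion is then the one from Remark \ref{sol}, transplanted to the scaled, $I$-modified setting, with initial time shifted to $t_0$.

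Next, Lemma \ref{LEM:tri2} gives $Y^{j,\ld}\in\cX^{0,\g}_2([0,\ld^bT]\times\T_\ld)$ with the bound \eqref{MIF11}, under the hypotheses \eqref{condrhog} and \eqref{condsneg}. Since the norm $C^\g_{2,T}$ is a supremum over pairs $(t,r)$, restricting the driver to a subinterval $[t_0,t_0+\tau]\cap[0,\ld^bT]$ and translating time by $t_0$ does not increase its $\cX^{0,\g}_2$ norm. This is why $C_0$ and $\theta$ come out independent of $t_0$.

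With this in hand, we apply Proposition \ref{PROP:young1}\,(i) on $\T_\ld$ with $s=0$ and a fixed $\s\in(1-\g,\g)$. The bounds \eqref{Y2}–\eqref{Ja3x} show that the map $\Gamma(\vec{\bf z}):=\vec z_0+\I^{\vec Y^\ld}(\vec{\bf z})$ sends a ball of $\CC^\s([t_0,t_0+\tau];L^2\times L^2)$ of radius $\sim\|\vec z_0\|_{L^2}$ into itself. Bilinearity of $Y^{j,\ld}$ makes $\Gamma$ a contraction there once $\tau$ is small, because Hölder seminorms over short intervals gain the factor $\tau^{\g-\s}$. Running the contraction exactly as in \cite[Proposition 3.8]{CGLLO} yields existence and uniqueness, the lower bound \eqref{ME0a} with $\theta=\frac{1}{\g-\s}$ (Remark \ref{theta}), and the $\CC^\s$ bound by $C_\s\|(u_0,v_0)\|_{L^2}$.

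One point needs care: the mean-zero constraint in Remark \ref{rem_0}. The estimate for $Y^{2,\ld}$ requires the first argument to have zero mean, so the fixed-point space must be $L^2_0(\T_\ld)\times L^2(\T_\ld)$. We therefore check that the first component of $\Gamma$ preserves mean zero. This holds because $\F(Y^{1,\ld}(\cdot,\cdot))(0)=0$, owing to the factor $in$ in \eqref{driversF}.

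No step here is hard given Lemma \ref{LEM:tri2}. The only genuine checks are the uniformity in $t_0$ (translation invariance of the driver norms on $[0,\ld^bT]$) and the mean-zero bookkeeping. I expect the former to be the main point needing an explicit remark.
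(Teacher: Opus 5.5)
Your proposal is correct and takes the same route as the paper, which only says the proof is analogous to that of Proposition~\ref{PROP:main}: the driver bound of Lemma~\ref{LEM:tri2} is fed into the Young-integral estimates of Proposition~\ref{PROP:young1}, and the standard contraction argument then gives $\theta=\frac{1}{\g-\s}$ as in Remark~\ref{theta}. Your two explicit checks are exactly the details the paper leaves implicit: uniformity in $t_0$, since the driver norm on a subinterval is dominated by the one on $[0,\ld^bT]$, and propagation of the mean-zero condition of Remark~\ref{rem_0} through the first component via the factor $in$ in \eqref{driversF}.
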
 
\begin{proof}
The proof is analogous to the one of Proposition \ref{PROP:main}, hence we omit the details.
\end{proof}
\begin{remark}\rm \label{rem28} We observe that, by combining \eqref{ME0a} with  \eqref{MIF11}, given $\ld\gg 1$ (and $\|(u_0,v_0)\|_{L^2(\T_\ld)\times L^2(\T_\ld)}=O(1)$), Corollary \ref{COR:I2} guaranties the existence of a unique solution to the $I$-scaled system \eqref{scaleKdV} on $[0,\tau]\times \T_\ld$, where $\tau>0$ satisfies
\begin{align}\label{MIFE}
\tau \sim 1\wedge \ld^{\big(\frac{3}{2}-b(1-\g)\big)\theta}.
\end{align}
In particular, the main goal of this section is to exhibit a regularization phenomenon for global well-posedness analogous to the one of the local theory; see Proposition \ref{PROP:main}. Namely, given $s<0$ and $\frac{1}{2}<\g<1$, we want to show that there exists $\rho\gg 1$ such that the modulated KdV system \eqref{cmkdv} is globally well-posed in $\H^s(\T)$ provided the modulation $w$ is $(\rho,\g)$-irregular. Then, two situations occur:

\noi (i) If $b\le \frac{3}{2(1-\g)}$, from \eqref{MIFE} we deduce that $\tau\sim 1$, as in \cite{CGLLO}. However, in the application of the $I$-method we need to take $b$ sufficiently large in order to have small initial data; see \eqref{bs}. As a result, in this regime we obtain the restriction
\begin{align}
s>-\frac{3\g}{2(1-\g)},
\end{align}
no matter how large $\rho$ is. Hence, we do not porsue this direction.

\noi (ii) If $b> \frac{3}{2(1-\g)}$, we don not have any a priori threshold on $s$. On the other hand, from \eqref{MIFE}, the local existence $\tau$  satisfies
\begin{align}\label{tau}
\tau\sim \ld^{\big(\frac{3}{2}-b(1-\g)\big)\theta}\to 0\quad \text{as}\ \ld\to \infty,
\end{align}
requiring more iteration steps in the $I$-method procedure. We finally recall that, in analogy to Remark \ref{theta}, we can take $\theta=\frac{1}{\g-\s}$, for $1-\g<\s<\g$. 
%For fixed $\s > 0$ such that $1 - \gamma < \s < \gamma$, we run the contraction argument in $\mathcal{C}^{\s}([t_0,t_0+\tau];L^{2}(\T_\ld))$. In particular, as we have already observed in Remark \ref{theta}, we have that $\theta = \frac {1}{\gamma - \s}$. Moreover, 
%suppose that $v_0 \in L^2 (\T_{\lambda})$ and $t_0 \in [0, \lambda^b T]$. Then, 
%from \eqref{ME0a} and \eqref{MIF1}, 
%there exist $C_0 > 0$ independent of $v_0$ and $t_0$, and a unique solution $I u$ to the scaled modulated $I$-KdV equation \eqref{SMIKdV} with $I u^{\lambda} (t_0) = v_0$ on the time interval $[t_0, t_0 + \tau] \cap [0, \lambda^b T]$, where 
%the local existence time $\tau$ in Corollary \ref{COR:I2} satisfies 
%\begin{align} 
   % 1\ge \tau \ges \lambda^{\left(\frac 32 - b (1 - \gamma) %\right) \theta} \big(1 + \|(u_0,v_0) \|_{L^2 (\T_{\lambda})\times L^2(\T_\ld)} \big)^{-\theta},
%\end{align}  
%provided $b\ge \frac{3}{2(1-\gamma)}$. Thus, if $b$ is large enough we do not have a uniform lower bound for $\lambda^{\frac 32 - b (1 - \gamma)}$ with $\lambda \geq 1$ and the local existence time $\tau$ will depend on $\lambda$.
\end{remark}

\subsection{Commutator estimates}
From the local well-posedness
of 
the modulated KdV system \eqref{cmkdv} (Proposition \ref{PROP:main}), 
 the blowup alternative \eqref{BA1}
ensures that 
global well-posedness holds
if we show that the $\H^s(\T)$-norm 
of a solution $(u,v)$ remains finite 
for each finite time $t > 0$. Actually, in view of the conservation of the spatial means, it will be enough to control the $\dot \H^s(\T)$-(semi)norm only. Thus, by \eqref{scaling1}, \eqref{scaling3} and \eqref{I1}
\begin{align}\label{hom_imp}
\|(u,v)(t)\|^2_{\dot \H^s(\T)}&=\ld^{2b-3+s} \|(u^\ld,v^{\ld})(
\ld^bt)\|^2_{\dot \H^s(\T_\ld)}\\ &\les \ld^{2b-3} \|(Iu^\ld,Iv^{\ld})(\ld^b t)\|^2_{L^2(\T_\ld)\times L^2(\T_\ld)}.
\end{align}
Next, if we fix  a target time $T\gg 1$, from \eqref{hom_imp}  and
\begin{align}
\label{mEnergy}
\|(I u^{\lambda}, Iv^{\lambda})(t) \|^2_{L^{2}(\T)\times L^2(\T)} = \|(I\uu^{\ld}, I\vv^{\ld})(t)\|^2_{L^2(\T_{\ld})\times L^2(\T_{\ld})},
\end{align}
it suffices to show that 
%\begin{align} 
%\label{ME2}
%    \sup_{0 \leq t \leq T} \| u (t) \|^2_{H^{s}(\T)} \leq C (T) < \infty 
%\end{align} 
\begin{equation}
\begin{split}
\label{ME} 
\sup_{0 \leq t \leq \lambda^b T} \|(I\uu^{\ld}, I\vv^{\ld})(t)\|^2_{L^2(\T_{\ld})\times L^2(\T_{\ld})} \leq C_{s,b} (T, \lambda) < \infty.
\end{split}
\end{equation} 

In particular, by arguing as in \eqref{zero_term} 
%{\Bl{state how we use \eqref{zero_term} more explicitly here.}},
we have for any $t > r \geq 0$
\begin{equation}
\begin{split}
\label{IDiff1}
\|(I\uu^{\ld},I\vv^{\ld})(t)\|^2_{L^2(\T_{\ld})\times L^2(\T_{\ld})}&=\|(I\uu^{\ld},I\vv^{\ld})(r)\|^2_{L^2(\T_{\ld})\times L^2(\T_{\ld})}\\
&\quad+
 2 \big\langle I \mathbf{u}^{\lambda} (r), \com^{1}_{t, r} (\mathbf{v}^{\lambda} (r), \mathbf{v}^{\lambda} (r)) \big\rangle_{L^2 (\T_{\lambda})}\\
 &\quad + 2 \big\langle I \mathbf{v}^{\lambda} (r), \com^{2}_{t, r} (\mathbf{u}^{\lambda} (r), \mathbf{v}^{\lambda} (r)) \big\rangle_{L^2 (\T_{\lambda})}\\
& \quad +R_{t, r}^{1,\lambda}+R_{t, r}^{2,\lambda}, 
\end{split}
\end{equation}
where the commutators $\com^{j}_{t, r}$ and the remainders $R_{t, r}^{j,\lambda}$ are given by 
\begin{align}
\label{IDiff2}   
\begin{split}
\com^{j}_{t, r} (f_1, f_2) & = I X^{j,\lambda}_{t, r} (f_1, f_2) - X^{j,\lambda}_{t, r} (I f_1, I f_2),\quad j=1,2, \\ 
R^{1,\lambda}_{t, r} & = \| I \mathbf{u}^{\lambda} (t) - I \mathbf{u}^{\lambda} (r) \|^2_{L^2 (\T_{\lambda})} \\ 
& \quad + 2 \big\langle I \mathbf{u}^{\lambda} (r), I \mathbf{u}^{\lambda} (t) - I \mathbf{u}^{\lambda} (r) - I X^{1,\lambda}_{t, r} (\mathbf{v}^{\lambda} (r), \mathbf{v}^{\lambda} (r)) \big\rangle_{L^2 (\T_{\lambda})},\\
R^{2,\lambda}_{t, r} & =\| I \mathbf{v}^{\lambda} (t) - I \mathbf{v}^{\lambda} (r) \|^2_{L^2 (\T_{\lambda})} \\
& \quad  + 2 \big\langle I \mathbf{v}^{\lambda} (r), I \mathbf{v}^{\lambda} (t) - I \mathbf{v}^{\lambda} (r) - I X^{2,\lambda}_{t, r} (\mathbf{u}^{\lambda} (r), \mathbf{v}^{\lambda} (r)) \big\rangle_{L^2 (\T_{\lambda})}.
\end{split}
\end{align}
The following proposition establishes control on the third and fourth terms on the right-hand side of \eqref{IDiff1}.
%%%%%%%%%%%%%%%%%%%%%%%%%%%%%%%%%%%%%%%%%%%%%%%%%%%%%%%%%%%%%%%%%%%%%%%%%%%%%%%%%%%
%%%%%%%%%%%%%%%%%%%%%%%%% Proposition 4.7 %%%%%%%%%%%%%%%%%%%%%%%%%%%%%%%%%%%%%%%%%
%%%%%%%%%%%%%%%%%%%%%%%%%%%%%%%%%%%%%%%%%%%%%%%%%%%%%%%%%%%%%%%%%%%%%%%%%%%%%%%%%%%
\begin{proposition} 
	\label{PROP:com1}
	Let  $\lambda \geq 1$ and $b\in \R$. Given $\alpha\in (0,4)\setminus\{1\}$, $0<\eps<1-\nu_c$, {{$\rho $}} satisfying \eqref{condrhog}, $\frac 12 < \gamma < 1$ and $T > 0$, let $w$ be $(\rho, \gamma)$-irregular on $[0, T]$, and let $X^{j,\lambda}$ be as in \eqref{Xld1}. If $s$ satisfies \eqref{condsneg} then  
	\begin{align} 
		\label{com1}
		\| \com^{j}_{t, r} (f_1, f_2) \|_{L^2 (\T_{\lambda})} \les {K}_b( \lambda, N) \| \Phi^w \|_{\mathcal{W}^{\rho, \gamma}_{T}} |t - r|^{\gamma} \| I f_1 \|_{L^2 (\T_{\lambda})} \| I f_2 \|_{L^2 (\T_{\lambda})}
	\end{align}  
	
	\noi 
	for any $0 \leq r <t \leq \lambda^{b} T$, $\lambda \geq 1$, and $N \in \N$, where ${K}_b(\lambda, N)$ is given by
	\begin{align}
    \label{Ktilde29}
	{K}_b(\lambda, N):=\ld^{-\frac{1}{2}+b(1-\gamma)-\rho(1-\nu_{c}-\eps)}N^{1-\rho(1-\nu_{c}-\eps)}.
\end{align}
\end{proposition}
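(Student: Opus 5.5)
We reduce \eqref{com1} to a weighted bilinear sum and treat it with Lemma \ref{lem_bi}, in the same way Lemma \ref{LEM:tri2} was proved. By \eqref{driversF} and \eqref{Ix2}, after the substitution $g_i:=If_i$ we have
\begin{align*}
\|\com^{j}_{t,r}(f_1,f_2)\|^2_{L^2(\T_\ld)}
\les \ld^{-1}\sum_{n\in\Z_\ld}|n|^2\bigg(\sum_{\substack{n_1,n_2\in\Z_\ld\\ n=n_1+n_2}}\ld^{-1}\Big|1-\frac{m(n)}{m(n_1)m(n_2)}\Big|\,|\Phi^{w^\ld}_{t,r}(\Xi^{(j)}(\bar n))|\,|\ft g_1(n_1)||\ft g_2(n_2)|\bigg)^2,
\end{align*}
with $m=m_{s,N}$. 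The interpolation lemma of \cite{CKSTT04} cannot remove $N$ here, because the gain comes from $N$. We therefore keep $N$ and use two standard facts. First, the commutator multiplier vanishes when $\max(|n|,|n_1|,|n_2|)\le N$. Second, since $s<0$,
\begin{align*}
\Big|1-\frac{m(n)}{m(n_1)m(n_2)}\Big|\les \frac{m(n)}{m(n_1)m(n_2)}\les \frac{(|n|/N)^{s}}{(|n_1|/N)^{s}(|n_2|/N)^{s}}
\end{align*}
on the region where the frequencies exceed $N$, with the factors involving frequencies $\le N$ replaced by $1$. The zero modes are handled exactly as the terms $S^{(j)}_1,S^{(j)}_2$ in Lemma \ref{LEM:tri2}, since the multiplier equals $0$ if $n_1=0$ or $n_2=0$. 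We then apply Lemma \ref{lem_bi}, which gives the factor $\ld^{2b(1-\g)-6\rho+2}$. It remains to show that the supremum-sum $K$, taken over the regions $B^{(j)}_k$ of \eqref{BB15}, is $\les \ld^{5+6\rho-2\rho(1-\nu_c-\eps)}N^{2-2\rho(1-\nu_c-\eps)}$. Combined with the prefactor $\ld^{-3}$ this yields $K_b(\ld,N)^2$.

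The dominant case is the near-resonant region $B^{(j)}_2$, where $|n|\sim|n_1|\sim|n_2|\gtrsim N$. There we use Corollary \ref{cor_res}(ii), which gives $|\Xi^{(j)}|^{-2\rho}\les \ld^{2\rho(2+\nu_c+\eps)}|n|^{-2\rho(1-\nu_c-\eps)}$. Since the number of $n_1$ with $\min(|n_1-cn|,|n_1-c_*n|)\ll\ld^{-1}$ is $O(1)$, the summand is
\begin{align*}
|n|^{2}\,(|n|/N)^{-2s}\,|n|^{-2\rho(1-\nu_c-\eps)}\ld^{2\rho(2+\nu_c+\eps)}.
\end{align*}
By \eqref{condsneg}, $2-2s-2\rho(1-\nu_c-\eps)\le 0$, so the supremum over $|n|\gtrsim N$ is attained at $|n|\sim N$. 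This produces exactly $N^{2-2\rho(1-\nu_c-\eps)}$, together with a power of $\ld$ that we then check matches. In the nonresonant part $B^{(j)}_1$ we use Corollary \ref{cor_res}(i) with the threshold $\ld^{-1}$: the bound $|\Xi|\ges\ld^{-1}|n|^2$ and the count $\sum_{|n_1|\sim|n|}1\sim\ld|n|$ (Proposition \ref{sum28}) give a better power of $N$ once $\rho\ge\frac12$. In the regions $B_3,B_4,B_5$ we use Corollary \ref{cor_res}(iii). There $|\Xi|\ges |n|\max^2$ gives a gain of roughly $\max^{-3\rho}$, which beats $\max^{-\rho(1-\nu_c-\eps)}$. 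The remaining sums over the small frequency are evaluated with Proposition \ref{sum28}, using the same $[\cdot]_\delta$ bookkeeping as in the proof of Lemma \ref{lem_nonlinsmooth}. For the region $B_3$ we use the $\sup_{n_1}$ form \eqref{eq_condition_2}. The two drivers are treated in parallel via \eqref{relphase}.

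I expect the main obstacle to be the $\ld$-bookkeeping rather than the powers of $N$. The lattice $\Z_\ld$, the lower bound $\ld^{-2-\nu_c-\eps}$ in \eqref{cor_res_3}, the counting factors $\ld$ from Proposition \ref{sum28}, and the factor $\ld^{-3\rho}$ from Lemma \ref{lem_Phi} all have to combine to give exactly $\ld^{-\frac12+b(1-\g)-\rho(1-\nu_c-\eps)}$, and in each region the $\ld$-exponent has to be no worse than this. To make the check mechanical I would first rescale $n=m/\ld$ with $m\in\Z$. This turns each regional sum into an unscaled sum over $m$ with the threshold $\ld N$, so the $N$ and $\ld$ dependence appear together through $\ld N$ plus explicit powers of $\ld$. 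The estimates of Lemma \ref{lem_nonlinsmooth} can then be reused essentially verbatim, with $\jb{\cdot}$ replaced by the cutoff at $\ld N$.
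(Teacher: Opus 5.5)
Your route is the paper's. You use the Fourier representation of $\com^{j}_{t,r}$ with the multiplier $M_N$ of \eqref{com4}, the bound $|M_N|\lesssim m_{s,N}(n)/(m_{s,N}(n_1)m_{s,N}(n_2))$ together with $M_N=0$ when $n_1n_2=0$ or when all frequencies are at most $N$, and Lemma \ref{lem_bi}. You also identify the near-resonant region $|n|\sim|n_1|\sim|n_2|\gtrsim N$ as the dominant one, with the supremum at $|n|\sim N$ by \eqref{condsneg}. The paper only orders the splitting differently: it first reduces to $|n_1|\ge|n_2|$ and $j=1$, splits by size relative to $N$ via \eqref{com5}, and then by resonance. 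That difference is immaterial.

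The step that fails as written is your claim that a bound $K\lesssim\lambda^{5+6\rho-2\rho(1-\nu_c-\eps)}N^{2-2\rho(1-\nu_c-\eps)}$ suffices. With the prefactor $\lambda^{-3}$ and the factor $\lambda^{2b(1-\gamma)-6\rho+2}$ from Lemma \ref{lem_bi}, reaching $K_b(\lambda,N)^2=\lambda^{-1+2b(1-\gamma)-2\rho(1-\nu_c-\eps)}N^{2-2\rho(1-\nu_c-\eps)}$ requires
\[
K\lesssim \lambda^{6\rho-2\rho(1-\nu_c-\eps)}N^{2-2\rho(1-\nu_c-\eps)}=\lambda^{2\rho(2+\nu_c+\eps)}N^{2-2\rho(1-\nu_c-\eps)},
\]
which is \eqref{eq_result}. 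Your stated target only yields $\lambda^{5/2}K_b(\lambda,N)$. The $\lambda$-exponent of $K_b$ is exactly what enters \eqref{CON1} and Lemma \ref{lem_region2}, so this loss is not cosmetic.

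Your own near-resonant computation already produces the correct power $\lambda^{2\rho(2+\nu_c+\eps)}$, so the outline survives once the target is corrected. Your rescaling is the right tool for the check. For $\bar n=\bar m/\lambda$ one has $\Xi^{(j)}(\bar n)=\lambda^{-3}\Xi^{(j)}(\bar m)$, so the $\lambda^{-3\rho}$ of Lemma \ref{lem_Phi} cancels exactly. The requirement then becomes that every unscaled regional sum with cutoff $L=\lambda N$ is $\lesssim L^{2-2\rho(1-\nu_c-\eps)}$. The remaining regions give $L^{3-4\rho}$, $L^{3-6\rho}$, $L^{2-4\rho}$ or $L^{-4\rho}\log L$. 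This one-parameter bookkeeping also exposes a sign slip in the paper's \eqref{D51}: Proposition \ref{sum28} gives $\lambda^{4s+4\rho}$ there, which is harmless since $s<0$.

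Two smaller corrections to your sketch:
\begin{itemize}
\item Corollary \ref{cor_res}(iii) covers only $B_3$; on $B_4$ and $B_5$ use Lemma \ref{lem_res}(iii).
\item The gain is not $|n_{\text{max}}|^{-3\rho}$ in $B_3$ for $j=1$ or in $B_4$ for $j=2$. There only $|n|^{-\rho}|n_{\text{max}}|^{-2\rho}$, respectively $|n_1|^{-\rho}|n_{\text{max}}|^{-2\rho}$, is available. Summing the small frequency down to $\lambda^{-1}$ in these regions is what creates positive powers of $\lambda$, such as the $\lambda^{2\rho-2}$ in the paper's $D_4$ estimate.
\end{itemize}
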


\begin{proof} By \eqref{IDiff2} \eqref{driversF}, \eqref{Ix2} and \eqref{FT3}, we have
\begin{align}
		\|  & I X^{j,\ld}_{t,r}( f _1, f _2) - X^{j,\ld}_{t,r} (I  f_1, I  f_2) \|^2_{L^2(\T_\ld)}\notag\\
		&\les
		\ld^{-3} \sum_{ n\in\Z_{\ld}^*}
		\bigg( \sum_{\substack{n_1, n_2 \in\Z_{\ld}\\n = n_1+n_2}}
		\ind_{|n_1|\ge |n_2|}|n||M_N(\bar n)|
		|\Phi^{w^\ld}_{t,r}(\Xi^{(j)}(\bar n))|
		|\ft{If}_1(n_1)| |\ft{If}_2 (n_2)|
		\bigg)^2\label{eq_sum_1}\\
		&\phantom{X}+
		\ld^{-3} \sum_{ n\in\Z_{\ld}^*}
		\bigg( \sum_{\substack{n_1, n_2 \in\Z_{\ld}\\n = n_1+n_2}}
		\ind_{|n_1|\le |n_2|}|n||M_N(\bar n)|
		|\Phi^{w^\ld}_{t,r}(\Xi^{(j)}(\bar n))|
		|\ft{If}_1(n_1)| |\ft{If}_2 (n_2)|
		\bigg)^2, \label{eq_sum_2}
\end{align}
\noi 
where  $M_{N} (\bar {n})$ is defined by setting 
\begin{align} 
	\label{com4}
	M_{N} (\bar{n}) = M_{N} (n, n_1, n_2) = \frac{m_{s, N} (n)}{m_{s, N} (n_1) m_{s, N} (n_2)} - 1. 
\end{align} 
From \eqref{Iop1} and \eqref{com4},  we see that $M_N(\bar{n})=0$ if $n_1=0$ or $n_2=0$. Hence, it is enough to bound the terms \eqref{eq_sum_1}-\eqref{eq_sum_2} when the summations are over the set 
\begin{align}\label{D_0}
D:=\{(n,n_1,n_2)\in(\Z^{*}_\ld)^3\,:\, n=n_1+n_2\}.
\end{align}
Before proceeding, we also note the following. If $j=1$, 
due to the symmetry of $\Xi^{(1)} (\bar {n})$ in $n_1$ and $n_2$, it is clear that any bound for \eqref{eq_sum_2} follows from the one for \eqref{eq_sum_1}. 
If $j=2$, although $\Xi^{(2)} (\bar {n})$ is not symmetric in $n_1$ and $n_2$,  Corollary \ref{cor_res} provides symmetric lower bounds for  $|\Xi^{(2)} (\bar {n})|$. In particular, since such lower bounds are the same for $j=1,2$, in order to obtain \eqref{com1} we can reduce to the bound \eqref{eq_sum_1} with internal summation on the set in \eqref{D_0}, for $j=1$.

For this purpose, we divide $\sum=\{(n, n_1, n_2) \in (\Z_{\lambda}^{\ast})^3: n=n_1+n_2, |n_1| \ge |n_2| \} = \cup_{j=1}^5 D_j$, where
\begin{align}
	\label{com5} 
	\begin{split} 
		D_{1} &:= \Big\{ (n, n_1, n_2) \in \sum : \frac {N}{2}> |n_1| \ge |n_2| \Big\}, \\ 
		D_{2}  &:= \Big\{ (n, n_1, n_2) \in \sum : |n_1| \geq \frac {N}{2} >  |n_2|,\, |n| < \frac {N}{4} \Big\}, \\ 
		D_{3} &:= \Big\{ (n, n_1, n_2) \in \sum : |n_1| \geq \frac {N}{2} > |n_2|,\, |n| \geq \frac {N}{4} \Big\}, \\ 
		D_{4} &:= \Big\{ (n, n_1, n_2) \in \sum : |n_1| \ge |n_2| \geq \frac {N}{2},\, |n| \ll N \Big\}, \\
		D_{5} &:= \Big\{ (n, n_1, n_2) \in \sum : |n_1| \ge |n_2| \geq \frac {N}{2},\, |n| \ges N \Big\}.
	\end{split}
\end{align} 

\noi 
First, we provide some bounds on $M_{N} (\bar{n})$ defined in \eqref{com4} on each set defined in \eqref{com5}.
\underline{On $D_{1}$ and $D_{2}$:} Since $m_{s, N} (n) = m_{s, N} (n_1) = m_{s, N} (n_2) = 1$ we conclude that 
\begin{align}\label{0_29}
M_{N} (\bar{n}) = 0.
\end{align}
 \underline{On $D_3$:} Since $m_{s, N} (n) \sim |n|^sN^{-s}$, $m_{s, N} (n_1) \sim |n_1|^sN^{-s}$ and $m_{s, N} (n_2) = 1$, we have
\begin{align}\label{1_29}
|M_{N} (\bar{n})| \les |n|^s|n_1|^{-s}+1 \les |n|^s|n_1|^{-s}.
\end{align}
\underline{On $D_4$:} Since $m_{s, N}(n)=1$, $m_{s, N} (n_1) \sim |n_1|^sN^{-s}$ and $m_{s, N} (n_2) \sim |n_2|^sN^{-s}$ we infer 
\begin{equation}\label{2_29}
\begin{split}
|M_{N} (\bar{n})| &\les (m_{s, N}(n_1)m_{s, N} (n_2))^{-1}+1\\
& \les |n_1|^{-s}|n_2|^{-s}N^{2s}\\
&  \les |n_1|^{-2s}N^{2s}.
\end{split}
\end{equation}
\underline{On $D_5$:} Since $m_{s, N} (n) \sim |n|^sN^{-s}$, $m_{s, N} (n_1) \sim |n_1|^sN^{-s}$ and $m_{s, N} (n_2) = |n_2|^sN^{-s}$ we get 
\begin{align}\label{3_29}
|M_{N} (\bar{n})| \les |n|^{s}|n_1|^{-s}|n_2|^{-s}N^{s}.
\end{align}

Next, by \eqref{0_29}, \eqref{1_29}, \eqref{2_29}, \eqref{3_29}, and Lemma \ref{lem_bi},  we infer 
\begin{align}\label{eq30}
\eqref{eq_sum_1}\les   \lambda^{-1+2b(1-\gamma)-6\rho} \Big(\sum_{k=3}^{5}A_{\ld, N}^{(k)}\Big)\|\Phi^w_{t,r}\|^2_{\W^{\rho,\g}_T} |t-r|^{2\g} \|{{I}} f_1\|^2_{L^2(\T_\lambda)}\|{{I}} f_2\|^2_{L^2(\T_\lambda)}, 
\end{align} 
where the quantities $A_{\ld, N}^{(k)}$, $k = 3,\,4,\,5$ are defined as 
\begin{align}
	A_{\ld, N}^{(3)} &:= {{ \sup_{n_1 \in \Z^{\ast}_{\ld}} \sum_{\substack{n, n_2 \in\Z_{\ld}^*\\n = n_1+n_2}} }} \ind_{D_{3} } \frac {|n|^{2s+2}|n_1|^{-2s}}{|\Xi^{(1)} (\bar{n})|^{2\rho}}, \label{com6}\\ 
	A_{\ld, N}^{(4)} &:= N^{4s}\sup_{n_1 \in \Z^{\ast}_{\ld}} \sum_{\substack{n, n_2 \in\Z_{\ld}^*\\n = n_1+n_2}} \ind_{D_{4} } \frac { {{ |n|^{2}|n_1|^{-4s} }} }{|\Xi^{(1)} (\bar{n})|^{2\rho}},  \label{com7}\\
	A_{\ld, N}^{(5)} &:= N^{2s}{{ \sup_{n \in \Z^{\ast}_{\ld}} \sum_{\substack{n_1, n_2 \in\Z_{\ld}^*\\n = n_1+n_2}} }} \ind_{D_{5} } \frac {|n|^{2s+2}|n_1|^{-2s}|n_2|^{-2s} }{|\Xi^{(1)} (\bar{n})|^{2\rho}},  \label{com8}
\end{align} 
provided
\begin{equation}\label{eq_result}
A_{\ld, N}^{(k)}\les \ld^{2\rho(2+\nu_{c}+\eps)} N^{2-2\rho(1-\nu_c-\eps)},\quad \text{for}\ k=3,4,5.
\end{equation}

%%%%%%%%%%%%%%%%%%%%%%%%%  Case 1 %%%%%%%%%%%%%%%%%%%%%%%%%%%%%%%%%%%
\noi $\bul$ \textbf{Case 1:}  on $D_3$. 
Let us split our analysis into three cases, corresponding to those of Corollary \ref{cor_res}. In particular, under the condition $|n_1|\ge |n_2|$,
we define the following sets:
\begin{align}
B_1&=\left\{(n,n_1,n_2)\in (\Z^*_\ld)^3: |n| \ll |n_1| \right\},\label{L1}\\  B_2&=\left\{(n,n_1,n_2)\in (\Z^*_\ld)^3: |n|\ges |n_1|, \min\left\{|n_1 - cn|, |n_1 - c_*n|\right\} \ges \ld^{-1} \right\},\label{L2}\\
B_3&=\left\{(n,n_1,n_2)\in (\Z^*_\ld)^3: |n|\ges  |n_1|, \min\left\{|n_1 - cn|, |n_1 - c_*n|\right\} \ll  \ld^{-1}\right\}\label{L3}. 
\end{align}
{{
Note that $D_3\cap B_1$ is empty because $|n|+|n_2|\ge |n-n_2|=|n_1| \gg |n| \ge \frac{|n|}{3}+\frac{N}{6}  > \frac{|n|+|n_2|}{3} $ is a contradiction.
On $D_{3} \cap B_2$, by (i) of Corollary \ref{cor_res} and Proposition \ref{sum28}, we have
\begin{align*}
\sup_{n_1 \in \Z^{\ast}_{\ld}} \sum_{\substack{n, n_2 \in\Z_{\ld}^*\\n = n_1+n_2}} \ind_{D_{3} \cap B_2} \frac {|n|^{2s+2}|n_1|^{-2s}}{|\Xi^{(1)} (\bar{n})|^{2\rho}} &\les \ld^{2\rho} \sup_{n_1 \in \Z^{\ast}_{\ld}} \sum_{\substack{n, n_2 \in\Z_{\ld}^*\\n = n_1+n_2}} \ind_{D_{3} \cap B_2} {|n|^{-4\rho+2}}\notag\\
&\les  \ld ^{2\rho} N^{-4\rho+2}  \sum_{|n_2|<\frac{N}{2}} 1\\
&\les  \ld ^{2\rho+1} N^{-4\rho+3}.
\end{align*}
On $D_{3} \cap B_3$, by (ii) of Corollary \ref{cor_res} and $0>s \ge 1-\rho(1-\nu_c-\eps)$, we infer 
\begin{align*}
\sup_{n_1 \in \Z^{\ast}_{\ld}} \sum_{\substack{n, n_2 \in\Z_{\ld}^*\\n = n_1+n_2}} \ind_{D_{3}\cap B_3} \frac {|n|^{2s+2}|n_1|^{-2s} } {|\Xi^{(1)} (\bar{n})|^{2\rho}} &\les \ld^{2\rho(2+\nu_{c}+\eps)} \sup_{n_1 \in \Z^{\ast}_{\ld}} \sum_{\substack{n, n_2 \in\Z_{\ld}^*\\n = n_1+n_2}} \ind_{D_{3} \cap B_3}|n|^{2-2\rho(1-\nu_c-\eps)}\\
&\les \ld^{2\rho(2+\nu_{c}+\eps)} N^{2-2\rho(1-\nu_c-\eps)}
\end{align*}
where we used $\#B_3\les 1$ for any $n_1 \in \Z^{*}_\ld$.
Since $\rho > \frac{1}{1-\nu_c-\eps}> 1$ and $\nu_c \ge 0$, we have $$\ld ^{2\rho+1} N^{-4\rho+3} \le \ld^{2\rho(2+\nu_{c}+\eps)} N^{2-2\rho(1-\nu_c-\eps)}.$$
Therefore, we obtain \eqref{eq_result} for $k=3$.

%%%%%%%%%%%%%%%%%%%%%%%% Case 2 %%%%%%%%%%%%%%%%%%%%%%%%%%%%%%%%
\noi $\bullet$ \textbf{Case 2: }on $D_4$.
Obviously, $D_4 \cap B_2$ and $D_4 \cap B_3$ are empty. 
Thus, we only need to consider the summation in $D_4 \cap B_1$.
By (iii) of Corollary \ref{cor_res}, $4\rho+4s\ge 0$ and Proposition \ref{sum28}, we have (for $\rho$ satisfying \eqref{condrhog})
\begin{align*}
A^{(4)}_{\ld,N} &\les  N^{4s}\sup_{n_1\in \Z^{*}_\ld}\sum_{\substack{n,n_2\in \Z^*_\ld\\ n=n_1+n_2}}\ind_{D_{4}\cap B_1}\frac{|n|^{2-2\rho}}{|n_1|^{4\rho+4s}}\\
& \les  N^{-4\rho} \sum_{\substack{n\in \Z^*_\ld\\ |n|\ll N}} |n|^{2-2\rho}\\
&\les \begin{cases}
	\lambda^{}N^{3-6\rho}, & \textup{if } \rho < \frac 32, \\ 
	\lambda^{} N^{-4\rho} (\log N + \log \lambda), & \textup{if } \rho = \frac 32,  \\ 
	\lambda^{2\rho-2} N^{-4 \rho}, & \textup{if } \rho > \frac 32,
\end{cases}
\end{align*}
which implies \eqref{eq_result} for $k=4$.

%%%%%%%%%%%%%%%%%%%%%%%  Case 3 %%%%%%%%%%%%%%%%%%%%%%%%%%%%%%%%%
\noi 
 $\bullet$ \textbf{Case 3: }on $D_5$.

In this case, we need to separately consider the summations on $D_5\cap B_i$, $i=1,2,3$.

On $D_5\cap B_1$, $|n_1|\sim |n_2| \gg |n| \ges N$ holds. Thus, by (iii) of Corollary \ref{cor_res} and Proposition \ref{sum28},
\begin{equation}\label{D51}
\begin{split}
  N^{2s}&\sup_{n \in \Z^{\ast}_{\ld}} \sum_{\substack{n_1, n_2 \in\Z_{\ld}^*\\n = n_1+n_2}} \ind_{D_{5} \cap B_1} \frac {|n|^{2s+2}|n_1|^{-2s}|n_2|^{-2s}}{|\Xi^{(1)} (\bar{n})|^{2\rho}}\\
  & \les    N^{2s}\sup_{|n| \ges N} |n|^{2s-2\rho+2}\sum_{n_1 \in\Z_{\ld}^*} |n_1|^{-4s-4\rho}\\
  & \les \lambda^{-4s-4\rho} N^{4s-2\rho+2},
\end{split}
\end{equation}
since $s>1-\rho$ and $2s-2\rho+2<4s<0$.

On $D_5\cap B_2$, $|n|\sim |n_1| \ge |n_2| \ges N$ holds. Thus, by (i) of Corollary \ref{cor_res} and Proposition \ref{sum28},
\begin{equation}\label{D52}
\begin{split}
N^{2s}&\sup_{n \in \Z^{\ast}_{\ld}} \sum_{\substack{n_1, n_2 \in\Z_{\ld}^*\\n = n_1+n_2}} \ind_{D_{5} \cap B_2} \frac {|n|^{2s+2}|n_1|^{-2s}|n_2|^{-2s}}{|\Xi^{(1)} (\bar{n})|^{2\rho}}\\
& \les  \ld^{2\rho}N^{2s}\sup_{\substack{n \in \Z^{*}_\ld\\ |n|\ges N}}|n|^{2-4\rho}  \sum_{\substack{n_2\in \Z^{*}_\ld\\ |n_2|\le |n|}} |n_2|^{-2s}\\
& \les  \ld^{2\rho+1}N^{2s}\sup_{\substack{n\in \Z^{*}_\ld\\ |n|\ges N}}|n|^{3-4\rho-2s} \\
&\les \ld^{2\rho+1}  N^{3-4\rho},
\end{split}
\end{equation}
since $3-4\rho-2s <1-2\rho <0$.

It remains to consider the case on $D_{5}\cap B_3$. Here, $|n|\sim |n_1| \ge |n_2| \ges N$ holds.
Thus, by (ii) of Corollary \ref{cor_res} 
\begin{equation}\label{D53}
\begin{split}
 N^{2s}&\sup_{n \in \Z^{\ast}_{\ld}} \sum_{\substack{n_1, n_2 \in\Z_{\ld}^*\\n = n_1+n_2}} \ind_{D_{5} \cap B_3} \frac {|n|^{2s+2}|n_1|^{-2s}|n_2|^{-2s}}{|\Xi^{(1)} (\bar{n})|^{2\rho}}\\
 & \les\ld^{2\rho(2+\nu_{c}+\eps)}N^{2s}\sup_{n \in \Z^{\ast}_{\ld}} \sum_{\substack{n_1, n_2 \in\Z_{\ld}^*\\n = n_1+n_2}} \ind_{D_{5} \cap B_3} |n_1|^{2-2\rho(1-\nu_c-\eps)-2s}\\
 & \les \ld^{2\rho(2+\nu_{c_1}+\eps)}N^{2-2\rho(1-\nu_{c}-\eps)},
 \end{split}
\end{equation}
since $s\ge 1-\rho(1-\nu_{c_1}-\eps)$ and $\# B_3 \les 1$ for any $n \in \Z^{\ast}_{\ld}$.
By combining \eqref{D51}, \eqref{D52} with \eqref{D53} and noting that the worst bound comes from \eqref{D53} we obtain \eqref{eq_result} for $k=5$.
}}
Putting together the above estimates with \eqref{eq30} yields \eqref{com1}.
\end{proof}

\remark \rm
Usually, in the $I$-method, a key argument in the proof of the commutator estimates is to exploit the mean value theorem in the high–low interaction in order to obtain favorable bounds. By using this argument, the estimate in $\eqref{1_29}$ can be improved to $$|M_N(\bar{n})| \les |n|^s |n_1|^{-s-1} |n_2|,$$
where $M_N(\bar {n})$ is defined as in \eqref{com4}. However, note that this argument is not needed in the proof of Proposition \ref{PROP:com1}.

%%%%%%%%%%%%%%%%%%%%%%%%%%%%%%%%%%%%%%%%%%%%%%%%%%%%%%%%%%%%%%%%%%%%%%%%%%%%%%%%%%%%%%%
%%%%%%%%%%%%%%%%%%%%%%%%%%%%% proof of theorem 1.6 %%%%%%%%%%%%%%%%%%%%%%%%%%%%%%%%%%%%
%%%%%%%%%%%%%%%%%%%%%%%%%%%%%%%%%%%%%%%%%%%%%%%%%%%%%%%%%%%%%%%%%%%%%%%%%%%%%%%%%%%%%%%

\subsection{Proof of Theorem \ref{THM:11} (iii)} 
%In this subsection, we aim to show the global well-posedness of the modulated KdV system \eqref{cmkdv} in $H^{s}$ with negative regularity parameter $s$. 
Fix $(u_0,v_0) \in H_0^s ({\T})\times H^s(\T)$ for some $s < 0$ satisfying \eqref{condX1} (to be determined later). It suffices to show \eqref{ME} for a suitable choice of $b=b(s,\g)$, $N=N(T)$, $\ld=\ld(T)$ sufficiently large.
%\begin{align} 
%\label{ME2}
%    \sup_{0 \leq t \leq T} \| u (t) \|^2_{H^{s}(\T)} \leq C (T) < \infty 
%\end{align} 

From \eqref{I1} and \eqref{scaling1}, we have 
\begin{align} 
\label{IT1}
    \| (I u_0^{\lambda},I v_0^{\lambda}) \|_{L^2 (\T_{\lambda})\times L^2(\T_\ld)} \les \lambda^{-b + \frac 32 - s} N^{-s} \| (u_0,v_0) \|_{H^s (\T)\times H^s(\T)}. 
\end{align} 
Then, given small $\eps_0 > 0$, we can choose 
\begin{align} 
\label{IT2}
\lambda \sim N^{- \frac {2s}{2b + 2s - 3}}, 
\end{align} 

\noi 
such that 
\begin{align} 
\label{IT3}
\| (I u_0^{\lambda}, I v_0^{\lambda}) \|_{L^2 (\T_{\lambda})\times L^2(\T_\ld)}^2 \leq \eps_0,
\end{align} 

\noi 
provided that 
\begin{align}\label{bs}
b>\tfrac{3}{2}-s.
\end{align} Here, the implicit constant in \eqref{IT2} depends only on $\| (u_0,v_0) \|_{H^s (\T)\times H^s(\T)}$.

As specified in Remark \ref{rem28}-(ii), we further assume $b> \frac{3}{2(1-\gamma)}$. Then, given $(u_0, v_0)$ with $$\|(u_0, v_0) \|^2_{L^2 (\T_{\lambda})\times L^2(\T_\ld)} = 2 \eps_0,$$ Corollary \ref{COR:I2}  guaranties the existence of a solution $(Iu^\ld,Iv^\ld)$ to the scaled modulated $I$-KdV \eqref{SMIKdV} until the time $\tau$ given by \eqref{tau}.  More precisely, if we have 
\begin{align} 
\label{Ia} 
    \| (I \mathbf{u}^{\lambda},I \mathbf{v}^{\lambda}) (t_0) \|^2_{L^2 (\T_{\lambda})} \leq 2 \eps_0
\end{align} 

\noi 
for some $0 \leq t_0 \leq \lambda^b T$, then Corollary \ref{COR:I2} guaranties that the solution $(I u^{\lambda},I v^{\lambda})$ to the scaled modulated $I$-KdV \eqref{SMIKdV} exists in the time interval $[t_0, t_0 + \tau] \cap [0, \lambda^b T]$, satisfying the bound: 
\begin{align}
\label{Ib} 
    \| (I \mathbf{u}^{\lambda}, I \mathbf{v}^{\lambda}) \|^2_{\mathcal{C}^{\s} ([t_0, t_0 + \tau] \cap [0, \lambda^b T]; L^2 (\T_{\lambda})\times L^2 (\T_{\lambda}))}
    %+\| I \mathbf{v}^{\lambda} \|^2_{\mathcal{C}^{\s} ([t_0, t_0 + \tau] \cap [0, \lambda^b T]; L^2 (\T_{\lambda}))}
    \leq C_\s\eps_0. 
\end{align}  

First, we make some preliminary estimates on the remainders $R^{\ld}_{t,r}=R^{1,\ld}_{t,r}+R^{2,\ld}_{t,r}$ defined in \eqref{IDiff1}. By re-writing \eqref{IDiff1} and arguing as in \cite[eq. (7.50)-(7.51)]{CGLLO} we obtain that
\begin{equation}
\label{R1}
\begin{split}
(\dl R^{\ld})_{t_1,t_2,t_3} &= {R^{\ld}_{t_1,t_3}-R^{\ld}_{t_1,t_2}-R^{\ld}_{t_2,t_3}}\\
&=2\langle I\uu^{\ld}(t_2)-I\uu^{\ld}(t_3),\textup{com}^{1}_{t_1,t_2}(\vv^{\ld}(t_2), \vv^{\ld}(t_2))\rangle_{L^2(\T_\ld)}\\
&\phantom{X}+2\langle I\uu^{\ld}(t_3),\textup{com}^{1}_{t_1,t_2}(\vv^{\ld}(t_2)-\vv^{\ld}(t_3), \vv^{\ld}(t_2))\rangle_{L^2(\T_\ld)}\\
&\phantom{X}+2\langle I\uu^{\ld}(t_3),\textup{com}^{1}_{t_1,t_2}(\vv^{\ld}(t_3), \vv^{\ld}(t_2)-\vv^{\ld}(t_3))\rangle_{L^2(\T_\ld)}\\
&\phantom{X}+2\langle I\vv^{\ld}(t_2)-I\vv^{\ld}(t_3),\textup{com}^{2}_{t_1,t_2}(\uu^{\ld}(t_2), \vv^{\ld}(t_2))\rangle_{L^2(\T_\ld)}\\
&\phantom{X}+2\langle I\vv^{\ld}(t_3),\textup{com}^{2}_{t_1,t_2}(\uu^{\ld}(t_2)-\uu^{\ld}(t_3), \vv^{\ld}(t_2))\rangle_{L^2(\T_\ld)}\\
&\phantom{X}+2\langle I\vv^{\ld}(t_3),\textup{com}^{2}_{t_1,t_2}(\uu^{\ld}(t_3), \vv^{\ld}(t_2)-\vv^{\ld}(t_3))\rangle_{L^2(\T_\ld)},
\end{split}
\end{equation}
for any $t_1>t_2>t_3\ge 0$. 

Next, let's fix $J\subset [0,\ld^bT]$ and $0<\s<\gamma$ such that $\gamma+\s>1$. Then, by \eqref{R1}, H\"older inequality and Proposition \ref{PROP:com1} we infer 
\begin{align*}
&|(\dl R^{\ld})_{t_1,t_2,t_3}|\\
&\phantom{XXX}\les {K}_b(\ld,N)\| \Phi^w \|_{\mathcal{W}^{\rho, \gamma}_{T}}|t_1-t_2|^{\gamma}|t_2 - t_3|^{\s}\Big(\| I \mathbf{v}^{\lambda} \|^2_{\mathcal{C}^{\s} (J; L^2 (\T_{\lambda}))}\| I \mathbf{u}^{\lambda} \|_{\mathcal{C}^{\s} (J; L^2 (\T_{\lambda}))}\\
&\phantom{XXXXXXXX}+\| I \mathbf{u}^{\lambda} \|^2_{\mathcal{C}^{\s} (J; L^2 (\T_{\lambda}))}\| I \mathbf{v}^{\lambda} \|_{\mathcal{C}^{\s} (J; L^2 (\T_{\lambda}))}\Big), 
\end{align*}
for any $t_1>t_2>t_3$ belonging to $J$.
Then, from, Lemma \ref{LEM:sew} we have 
\begin{align}
\label{R2}
&|R^{\ld}_{t_1,t_2}|\les {K}_b(\ld,N)\| \Phi^w \|_{\mathcal{W}^{\rho, \gamma}_{T}} |t_1-t_2|^{\s+\gamma}\Big(\| I \mathbf{v}^{\lambda} \|^2_{\mathcal{C}^{\s} (J; L^2 (\T_{\lambda}))}\| I \mathbf{u}^{\lambda} \|_{\mathcal{C}^{\s} (J; L^2 (\T_{\lambda}))}\notag\\
&\phantom{XXXXXXXX}+\| I \mathbf{u}^{\lambda} \|^2_{\mathcal{C}^{\s} (J; L^2 (\T_{\lambda}))}\| I \mathbf{v}^{\lambda} \|_{\mathcal{C}^{\s} (J; L^2 (\T_{\lambda}))}\Big),
\end{align}
for any $t_1>t_2$ belonging to $J$.

Suppose that by choosing suitable $\rho$, $\gamma$, $s$, $b$, and $N \gg 1$, we can make $K_b( \lambda,  N)$ in \eqref{Ktilde29} as small as we need. Then, by \eqref{IT3}, Proposition \ref{com1}, \eqref{R2}, \eqref{Ib},  \eqref{tau} and  \eqref{IDiff1} with $t_0 = 0$ and $t\in [0,\tau]$, we have  
\begin{align*}
&\| (I \mathbf{u}^{\lambda},I \mathbf{v}^{\lambda}) (t) \|^2_{L^2 (\T_{\lambda})\times L^2 (\T_{\lambda})}\\
&\phantom{XXX}\le \| (I u_0^{\lambda},I v_0^{\lambda}) \|_{L^2 (\T_{\lambda})\times L^2 (\T_{\lambda})}^2+|R^{\ld}_{t,0}|+2 {K}_b(\ld,N) \,t^{\gamma}\|I\vv_0^{\ld}\|^2_{L^{2}(\T_\ld)}\|I\uu_0^{\ld}\|_{L^{2}(\T_\ld)} \| \Phi^{w} \|_{\mathcal{W}_{T}^{\rho, \gamma}}\\
&\phantom{XXXXX}+2 {K}_b(\ld,N) \,t^{\gamma}\|I\vv_0^{\ld}\|^2_{L^{2}(\T_\ld)}\|I\uu_0^{\ld}\|_{L^{2}(\T_\ld)} \| \Phi^{w} \|_{\mathcal{W}_{T}^{\rho, \gamma}}\\ 
&\phantom{XXX}\le \eps_0+C {K}_b(\lambda, N)\, (t^{\s + \gamma}+t^{\g})\| \Phi^{w} \|_{\mathcal{W}_{T}^{\rho, \gamma}}\\
& \phantom{XXX} \le \eps_0 +C_1 \tau^\g  {K} _b(\lambda, N) \| \Phi^{w} \|_{\mathcal{W}_{T}^{\rho, \gamma}}.
\end{align*}
As a consequence, we infer
\begin{equation} 
\begin{split}
\label{Ic}
    \sup_{0 \leq t \leq \tau} \| (I \mathbf{u}&^{\lambda},I \mathbf{v}^{\lambda}) (t) \|^2_{L^2 (\T_{\lambda})\times L^2(\T_\ld)}\\
    &   \leq \eps_0 + C_1 \tau^\g{K}_b(\lambda, N) \| \Phi^{w} \|_{\mathcal{W}_{T}^{\rho, \gamma}}\\
    &  \leq 2 \eps_0
   \end{split}
\end{equation} 

\noi 
for some constant $C_1 > 0$. Hence, the solution $(I u^{\ld}, Iv^{\ld})$ exists on $[\tau, 2 \tau]$ satisfying the bounds \eqref{Ia} and \eqref{Ib} with $t_0 = \tau$, which allows us to iterate this process. After $j$ steps, this iterative process shows that the solution $I u^{\lambda}$ exists on $[0, j \tau]$, satisfying \eqref{Ia} and \eqref{Ib} with $t_0 = (j-1) \tau$, and moreover, we have 
\begin{equation}
\begin{split}
\label{Id} 
    \sup_{0 \leq t \leq j \tau} \| (I \mathbf{u}&^{\lambda},I \mathbf{v}^{\lambda}) (t) \|^2_{L^2 (\T_{\lambda})\times L^2(\T_\ld)}\\
     &\leq  \eps_0 + j C_1 \tau^\g{K}_b(\lambda, N) \| \Phi^w \|_{\mathcal{W}^{\rho, \gamma}_{T}}\\
     & \le 2\eps_0,
     \end{split}
\end{equation} 
where the last inequality holds for $j=1,\dots, [\frac{\ld^bT}{\tau}]+1$, if 
\begin{align}\label{cond35}
j\tau^\g{K}_b(\lambda, N)\les \frac{\ld^bT}{\tau}\tau^{\g}K_b(\ld,N)\ll 1.
\end{align}
Now, let us consider  $\theta=\theta(\s)$ defined as
\begin{align}
\label{dr35}
\theta(\s)=\frac{1}{\gamma-\s},\quad 1-\gamma<\s<\g,
\end{align} 
and $b$ satisfying
\begin{align}\label{b_c}
b>b_*(s,\g):=\max\bigg(\frac{3}{2}-s, \frac{3}{2(1-\gamma)}\bigg).
\end{align}
In view of \eqref{tau}, the inequality \eqref{cond35} is equivalent to the following assertion:
\vspace{2mm}

\noi (A) There exist $N=N(T)\gg 1$ and $\ld=\ld(T)\gg 1$ satisfying \eqref{IT2} such that 
\begin{align} 
\label{CON1}
   \lambda^{b - (1-\g)\big( \frac 32 - b (1 - \gamma) \big) \theta}{K}_b( \lambda, N)=\eps_1,
\end{align}
for some small $\eps_1>0$.
%\noi 
%Then, proceeding as in \eqref{Ic} with \eqref{Id}, we obtain that 
%\begin{equation}
%\begin{split}
%\label{Ie} 
%&\sup_{0 \leq t \leq (j + 1) \tau} \Big(\| I \mathbf{u}^{\lambda} (t) \|^2_{L^2 (\T_{\lambda})}+ \| I \mathbf{v}^{\lambda} (t) \|^2_{L^2 (\T_{\lambda})}\Big)\\
%&\phantom{XXXXXXXX}\leq  \eps_0 + (j + 1) C_1 \lambda^{\left(\frac 32 - b (1 - \gamma) \right) \theta \gamma} {K}_b( \lambda, N) \| \Phi^w \|_{\mathcal{W}^{\rho, \gamma}_{T}} \\
%&\phantom{XXXXXXXX}\leq 2\eps_0, 
%\end{split}
%\end{equation}   

%For the convenience of the reader,  we call (A) such an assertion. 
Given $\rho\ge \frac{1}{2}$, $\frac{1}{2}<\g<1$, the task of the remaining part of this section is to find minimal restrictions on $s<0$ satisfying \eqref{condsintro} so that this assertion (A) holds, by making a suitable choice of $\theta$ and $b$ satisfying \eqref{dr35} and \eqref{b_c}.

Before stating our final lemma, for the convenience of the reader, we also set the following quantities:
\begin{align}
\rho_*&:=\frac{5-2\g}{2(1-\g)(1-\nu_c -\eps)}\label{rho_last};\\
\g_*&:=-\frac{3\gamma}{2(1-\gamma)}\label{gamma_last};\\
s_1&:=\g \left( 1-\rho(1-\nu_c-\eps)\right)\label{s1_last};\\
s_2&:= \frac{
2(2\gamma-1)\bigl(1 - \rho(1-\nu_c-\varepsilon)\bigr)
- 3(1-\gamma)^2
}{
2(-\gamma^2+3\gamma-1)
}.\label{s2_last}
\end{align}
\begin{lemma}\label{lem_region2}
Let $\frac{1}{2}<\g<1$, $0<\eps<1-\nu_c$, $\rho_*, s_1, s_2$  as in \eqref{rho_last}-\eqref{s1_last}-\eqref{s2_last}, $\rho$ satisfying \eqref{condrhog} and  $s<0$. Then, the following hold:

\begin{itemize}
\item[(i)]  Suppose that $\rho \le \rho_*$. Then, there exist $\theta$ and $b$ as in \eqref{dr35} and \eqref{b_c} such that
the assertion (A) holds if and only if
$s>s_1$.
\item[(ii)]  Suppose that $\rho>\rho_*$. Then, there exist $\theta$ and $b$ as in \eqref{dr35} and \eqref{b_c} such that
the assertion (A) holds if and only if
$s>%\tfrac{2-2\rho(1-\nu_c-\eps) +3(1-\g)\left(1-\g\left( \tfrac{1}{2\g-1}\right) \right)}{2 \left( 2-\g + \left( \tfrac{1}{2\g-1}\right)(1-\g)^2 \right)} 
s_2.$
\end{itemize}
In particular, under the above assumptions, $\rho$,$s$ satisfy \eqref{condX1} and, if $w$ is $(\rho,\g)$-irregular on $\R_+$ and $u_0$ has mean zero, then the system of Young differential equations \eqref{sYDE} is globally well-posed in $\H^s(\T).$
\end{lemma}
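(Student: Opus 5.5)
The plan is to turn assertion (A) into one explicit inequality between exponents, and then optimize it over the free parameters $\theta$ and $b$. Throughout, write $\beta:=1-\nu_c-\eps$, so that $K_b(\ld,N)=\ld^{-\frac12+b(1-\g)-\rho\beta}N^{1-\rho\beta}$ by \eqref{Ktilde29}.

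\textbf{Step 1: reduce (A) to an exponent condition.} Substituting the coupling \eqref{IT2}, $\ld\sim N^{-\frac{2s}{2b+2s-3}}$, into the left-hand side of \eqref{CON1} gives a pure power $N^{E(b,\theta)}$, where
\begin{align*}
E(b,\theta)=(1-\rho\beta)+\frac{-2s}{2b+2s-3}\Big(b-(1-\g)\big(\tfrac32-b(1-\g)\big)\theta-\tfrac12+b(1-\g)-\rho\beta\Big).
\end{align*}
Note that $2b+2s-3>0$ by \eqref{b_c}. Assertion (A) holds if and only if there is an admissible pair $(\theta,b)$ with $E(b,\theta)<0$: in that case we choose $N=N(T)\gg1$ so that the power equals $\eps_1$, up to the constants coming from $T$ and $\|\Phi^w\|$. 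Conversely, if $E\ge 0$ for every admissible pair, no choice of $N$ makes the left-hand side small. So (A) is equivalent to $\inf_{\theta,b}E(b,\theta)<0$, where the infimum runs over $\theta>\frac1{2\g-1}$ (this is \eqref{dr35}, since $\s\in(1-\g,\g)$) and $b>b_*(s,\g)$.

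\textbf{Step 2: optimize in $\theta$, then in $b$.} Because $b>\frac{3}{2(1-\g)}$, the factor $\frac32-b(1-\g)$ is negative. Since $-2s>0$, the quantity $E$ is therefore increasing in $\theta$, so we let $\theta\downarrow\frac1{2\g-1}$. The infimum is not attained, which is why every threshold in the statement is a strict inequality.

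Next, multiply by $2b+2s-3>0$. The condition becomes $L(b)<0$ for a function $L$ that is affine in $b$. Its slope is
\begin{align*}
2(1-\rho\beta)-2s\Big(2-\g+\tfrac{(1-\g)^2}{2\g-1}\Big)=2(1-\rho\beta)-2s\,\frac{-\g^2+3\g-1}{2\g-1}.
\end{align*}
Since $L$ is affine, its infimum over $b>b_*$ is approached either as $b\downarrow b_*$ or as $b\to\infty$, depending on the sign of the slope. So we compute $L(b_*)$ in the two regimes of $b_*$:
\begin{itemize}
\item[(a)] $b_*=\frac32-s$, which happens when $s\le\g_*$;
\item[(b)] $b_*=\frac{3}{2(1-\g)}$, which happens when $s>\g_*$.
\end{itemize}
We also compute the limiting condition as $b\to\infty$.

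Solving $L<0$ for $s$ in each regime should give linear thresholds in $s$. These must be matched against $s_1=\g(1-\rho\beta)$ and $s_2$: the extra term $-3(1-\g)^2$ in $s_2$ signals that $s_2$ comes from the endpoint $b_*=\frac{3}{2(1-\g)}$. The switch between the two answers should occur exactly at $\rho=\rho_*$, i.e.\ where the relevant threshold crosses $\g_*$.

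\textbf{Main obstacle.} I expect the hardest part to be this bookkeeping. For each combination of the sign of the slope and the regime of $b_*$, we need to identify which branch is optimal, and then verify that the union of the admissible sets of $s$ is exactly $\{s>s_1\}$ when $\rho\le\rho_*$ and $\{s>s_2\}$ when $\rho>\rho_*$. Part of this is checking that the $b\to\infty$ branch never gives a weaker restriction than the stated one. I would first tabulate $L(b_*)$ and the $b\to\infty$ limit as explicit functions of $(s,\rho)$, and then compare the resulting thresholds algebraically at the crossover $\rho=\rho_*$.

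\textbf{Step 3: conclude global well-posedness.} First check that $s_1,s_2\ge 1-\rho\beta$, so that \eqref{condX1} holds. For $s_1$ this follows from $1-\rho\beta<0$ and $\g<1$; for $s_2$ it is a direct comparison. Then, given a target time $T$, (A) yields \eqref{cond35}, hence the iteration \eqref{Id} up to time $\ld^bT$, hence \eqref{ME}. By \eqref{hom_imp}, together with conservation of the spatial means, the $\H^s$-norm of the solution stays finite on $[0,T]$. The blowup alternative \eqref{BA1} then gives global well-posedness of \eqref{sYDE}.
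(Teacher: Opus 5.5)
Your reduction is correct as far as it goes. $(2b+2s-3)E$ is exactly the paper's $F_{\gamma,\rho,\nu_c}(b,\theta)$. It is increasing in $\theta$ because every admissible $b$ exceeds $\frac{3}{2(1-\gamma)}$, and at $\theta_0=\frac1{2\gamma-1}$ it is affine in $b$ with the slope you wrote. However, the proposal stops exactly where the content of the lemma begins: you never derive the thresholds $s_1,s_2$, and your one concrete prediction about them is backwards. With $\beta=1-\nu_c-\varepsilon$ one has $F(b,\theta)=C_0(\theta)\,b+3s(1-\gamma)\theta+3s-3(1-\rho\beta)$, where $C_0(\theta)=2(1-\rho\beta)-2s\big(2-\gamma+(1-\gamma)^2\theta\big)$, and
\begin{align*}
F\big(\tfrac{3}{2(1-\gamma)},\theta\big)=\tfrac{3}{1-\gamma}\,(s_1-s)\ \text{ for every }\theta,
\qquad
F\big(\tfrac32-s,\theta_0\big)=-2s\,\tfrac{-\gamma^2+3\gamma-1}{2\gamma-1}\,(s_2-s).
\end{align*}
At $b=\frac{3}{2(1-\gamma)}$ the $\theta$-dependence cancels, so that endpoint can only produce a $\theta$-free threshold, namely $s_1$. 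By contrast, $s_2$ carries $\theta_0$ through $-\gamma^2+3\gamma-1=(2\gamma-1)\big(2-\gamma+(1-\gamma)^2\theta_0\big)$, and it comes from the other endpoint $b_*=\frac32-s$, i.e.\ from the regime $s\le\gamma_*$. If you tried to match $s_2$ with $L(\frac{3}{2(1-\gamma)})$ as you propose, the bookkeeping would not close.

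With these values your scheme gives: (A) holds iff $s>\bar s:=\frac{(2\gamma-1)(1-\rho\beta)}{-\gamma^2+3\gamma-1}$ (negative slope, $b\to\infty$), or $s\ge\gamma_*$ and $s>s_1$, or $s\le\gamma_*$ and $s>s_2$. The comparison step you left open needs three facts:
\begin{itemize}
\item $\bar s-s_1=(1-\rho\beta)\frac{(\gamma-1)^3}{-\gamma^2+3\gamma-1}>0$, using $1-\rho\beta<0$ from \eqref{condrhog};
\item $\bar s-s_2=\frac{3(1-\gamma)^2}{2(-\gamma^2+3\gamma-1)}>0$, so the $b\to\infty$ branch never adds anything;
\item the identities of Remark~\ref{rem_sstar}, which show that $s_1-\gamma_*$ and $s_2-\gamma_*$ both have the sign of $\rho_*-\rho$.
\end{itemize}
Hence for $\rho\le\rho_*$ the third set is empty and the union is $\{s>s_1\}$. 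For $\rho>\rho_*$ the second set is $\{s\ge\gamma_*\}$, and together with the third the union is $\{s>s_2\}$.

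Once these steps are supplied, your monotone-in-$\theta$, affine-in-$b$ argument is a tidier alternative to the paper's. The paper writes $F$ in the two forms \eqref{eq_F1}--\eqref{eq_F2}, treats cases (i.a)--(ii.c) by explicit choices of $(b,\theta)$, and argues the converse separately; your route gives both directions at once.

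One more point in Step 3. The inequality $s_2>1-\rho\beta$ is equivalent to $\rho\beta-1>\frac{3(1-\gamma)}{2\gamma}$. This holds in case (ii) because $\rho>\rho_*$, but it fails for $\rho$ slightly above $1/\beta$. So your ``direct comparison'' must use the case hypothesis.
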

\begin{remark}\label{rem_sstar}
\rm  Note that, if $\frac{1}{2}<\g<1$, $0<\eps<1-\nu_c$, and $\rho_*, \g_*, s_1, s_2$ are as in \eqref{rho_last}-\eqref{gamma_last}-\eqref{s1_last}-\eqref{s2_last}, we have 
\begin{align}\label{eqqqstar}
\sgn ( \rho_*-\rho )=\sgn ( s_1-\g_* )=\sgn ( s_2-\g_* ).
\end{align}
Indeed, by a direct calculation one can see that 
\begin{align*}
s_1-\g_*
%&=\gamma\left(1-\rho(1-\nu_c-\varepsilon) +\frac{3}{2(1-\gamma)}\right)\\
%&=\gamma(1-\nu_c-\varepsilon) \left(\frac{5-2\g}{2(1-\g)(1-\nu_c -\eps)}-\rho\right)
=\gamma(1-\nu_c-\varepsilon)(\rho_*-\rho),
\end{align*}
and
\begin{align*}
s_2-\g_*
%=&\frac{2-2\rho(1-\nu_c-\varepsilon)+3(1-\gamma)\left(1-\gamma \left( \frac{1}{2\gamma-1}\right) \right)+\frac{3\gamma}{1-\gamma}\left(2-\gamma+\left(\frac{1}{2\gamma-1}\right)(1-\gamma)^2\right)}{2\left(2-\gamma+\left(\frac{1}{2\gamma-1}\right)(1-\gamma)^2\right)}\\
%= &\frac{\frac{5-2\gamma}{2(1-\gamma)}-\rho(1-\nu_c-\varepsilon)}{2-\gamma+\left(\frac{1}{2\gamma-1}\right)(1-\gamma)^2}
%=\frac{(1-\nu_c-\varepsilon)}{2-\gamma+\left(\frac{1}{2\gamma-1}\right)(1-\gamma)^2}(\rho_*-\rho)
=\frac{(2\g-1)(1-\nu_c-\varepsilon)}{-\g^2+3\g-1}(\rho_*-\rho).
\end{align*}
\end{remark}

\begin{proof}[Proof of Lemma \ref{lem_region2}]
By \eqref{IT2} and \eqref{Ktilde29} we see that \eqref{CON1} is equivalent to
\[
N^{\frac{1}{2b+2s-3}F_{\gamma,\rho,\nu_c}(b,\theta)} \les 1
\]
where $\theta$ is defined by \eqref{dr35}, while $F_{\gamma,\rho,\nu_c}(b,\theta)$ by 
\begin{align}
F_{\gamma,\rho,\nu_c}(b,\theta)=&-2s\left( b-(1-\gamma)\left( \frac{3}{2}-b(1-\gamma)\right)\theta-\frac{1}{2}+b(1-\gamma)-\rho(1-\nu_c-\varepsilon)  \right)\notag\\
&\quad +(1-\rho(1-\nu_c-\varepsilon))(2b+2s-3)\notag\\
=&C_0 \left(b-\frac{3}{2(1-\gamma)}\right)+C_1(-s+s_1)\label{eq_F1}\\
=&C_0 \left(b-\left(\frac{3}{2}-s\right) \right)+C_2(-s+\g_*)\left(\theta-\frac{1}{2\gamma-1}\right)+C_3(-s+s_2),\label{eq_F2}
\end{align}
with 
\begin{align*}
&C_0:=2(2-\gamma+(1-\gamma)^2\theta)\left( -s+\frac{1-\rho(1-\nu_c-\varepsilon)}{2-\gamma+(1-\gamma)^2\theta}\right),\\
&C_1:=\frac{3}{1-\gamma},\\
&C_2:=-2s(1-\gamma),\\
& C_3:=-2s\left(\frac{-\g^2+3\g-1}{2\g-1}\right).
\end{align*}
Note that $C_1,C_2,C_3$ are positive constants. 
Moreover, since $\theta>\frac{1}{2\gamma-1}$, and $2b+2s-3>0$, the assertion (A) holds if and only if
there exist $\theta >\frac{1}{2\gamma-1}$ and  $b>b_*(s,\g)$ such that $F_{\gamma,\rho,\nu_c}(b,\theta)<0$.

First, we assume that (i) holds. By Remark \ref{rem_sstar} and a direct computation we obtain that
\begin{align}\label{sbar}
\g_* \le s_1 \le \frac{(2\g-1)(1-\rho(1-\nu_c-\varepsilon))}{-\g^2+3\g-1}=: \bar{s}.
\end{align} 
Then we consider two subcases.
\smallskip

\noi {\bf Case (i.a)}: $s > \bar{s}$.
By taking $\theta>\frac{1}{2\gamma-1}$ sufficiently close to $\frac{1}{2\gamma-1}$
then $C_0<0$.
Moreover, in view of \eqref{eq_F1} and positivity of $C_1$, for any $b>b_*(s,\g)$ we conclude that $F_{\gamma,\rho,\nu_c}(b,\theta)<0$.
\smallskip

\noi {\bf Case (i.b)}: $s_1 < s \le \bar{s}$.
Note that in this case $C_0 > 0$ for any $\theta>\frac{1}{2\gamma-1}$. On the other hand, 
since $\g_* \le s_1 < s$, we have $b_*(s,\g) =\frac{3}{2(1-\gamma)}$.
Therefore, taking $b=\frac{3}{2(1-\gamma)}+\delta$ for $\delta>0$ sufficiently small in \eqref{eq_F1},  from positivity of $C_1$, yields $F_{\gamma,\rho,\nu_c}(b,\theta)<0$.

We have therefore proved that (A) holds for any $s >s_1$ by assuming (i). 

Next, we consider (ii). Note that in this case, by Remark \ref{rem_sstar} and a direct computation,  we have 
\begin{align}
\text{max}(s_1, s_2)  < \g_* \le \bar{s}.
\end{align}
We consider  three subcases:

\noi {\bf Case (ii.a)}: $s > \bar{s}$.
Similarly to (i.a),  $F_{\gamma,\rho,\nu_c}(b,\theta)<0$ holds if we take
$\theta>\frac{1}{2\gamma-1}$ sufficiently close to $\frac{1}{2\gamma-1}$ and $b$ sufficiently large.
\smallskip

\noi {\bf Case (ii.b)}: $\g_* \le  s \le\bar{s}$.
Since $b_*(s,\g)=\frac{3}{2(1-\gamma)}$ and $s_1<s_*\le s$,
taking $b= \frac{3}{2(1-\gamma)}+\delta$ for $\delta>0$ sufficiently small and arguing as in case (i.b)  we obtain $F_{\gamma,\rho,\nu_c}(b,\theta)<0$.
\smallskip

\noi {\bf Case (ii.c)}:  $s_2 <s < \g_*$.
Since $b_*(s,\g) =\frac{3}{2}-s$, the positivity of $C_2,C_3$ guarantees that
by taking $b =\frac{3}{2}-s+\delta_1$ and $\theta=\frac{1}{2\gamma-1}+\delta_2$ for some sufficiently small $\delta_1,\delta_2>0$, then $F_{\gamma,\rho,\nu}(b,\theta)<0$ holds true.

We have therefore proved that (A) holds for any $s >s_2$ by assuming (ii). 

In order to conclude the proof of Lemma \ref{lem_region2}, it remains to prove the ``only if" part of the statement. We argue as follows.

If  $\rho\le  \rho_*$ and $s \le s_1$, then $C_0>0$. Moreover, by \eqref{eq_F1} and  \eqref{eqqqstar}, we have $F_{\gamma,\rho,\nu_c}(b,\theta)>0$ for any $b>\frac{3}{2(1-\gamma)}$. Hence, the assertion (A) does not hold.
Similarly, if $\rho> \rho_*$ and   $s \le s_2$, it follows that $C_0>0$. Furthermore, by \eqref{eq_F2} and \eqref{eqqqstar}, we have $F_{\gamma,\rho,\nu}(b,\theta)>0$ for any $b>\frac{3}{2}-s$ and $\theta >\frac{1}{2\gamma-1}$. Therefore, the assertion (A) does not hold.
\end{proof}

\begin{ackno}\rm

%G.L. was supported by the NSFC (grant no. 12501181). 
D.G. was supported by the European Research Council (grant no.~864138 ``SingStochDispDyn'').
%and by the EPSRC Mathematical Sciences Small Grant
%(grant no. EP/Y033507/1).
%S.L. acknowledged support from the Deutsche Forschungsgemeinschaft (DFG, German Research Foundation) under Germany's Excellence Strategy -- EXC-2047/1 -- 390685813. 
 The authors are very thankful to Prof. Tadahiro Oh for suggesting the problem as well as providing useful feedbacks and comments. K.T. would like to thank the School of Mathematics at the University of Edinburgh for its hospitality, where this
manuscript was prepared during his visit.
%N.T. was partially supported by the ANR
%project Smooth ANR-22-CE40-0017. 

\end{ackno}

%%%%%%%%%%%%%%%%%%%%%%%%%%%%%%%%%%%%%%%%%%%%%%%%%%%%%%%%%%%%%%
%%%%%%%%%%%%%%%%%%%%%%%%%%%%%%%%%%%%%%%%%%%%%%%%%%%%%%%%%%%%%%
%%%%%%%%%%%%%%%%%%%%%%%%%%%%%%%%%%%%%%%%%%%%%%%%%%%%%%%%%%%%%%

%\section{Preliminary lemmas}

%%%%%%%%%%%%%%%%%%%%%%%%%%%%%%%%%%%%%%%%%%%%%%%%%%%%%
%%%%%%%%%%%%%%%% lem_res %%%%%%%%%%%%%%%%%%%%%%%%%%%%%%
%%%%%%%%%%%%%%%%%%%%%%%%%%%%%%%%%%%%%%%%%%%%%%%%%%%%%

\end{document}